\title{Geometric structure of Dimension functions of certain Continuous fields}
\author{Ramon Antoine} 
\author{Joan Bosa}
\author{Francesc Perera}
\author{Henning Petzka}
\address{Departament de Matem\`atiques, Universitat Aut\`onoma de Barcelona, 08193 Bellaterra, Barcelona, Spain}\email{ramon@mat.uab.cat, jbosa@mat.uab.cat, perera@mat.uab.cat, \newline\indent petzka@mat.uab.cat}
\date{\today}
\theoremstyle{plain}
\newtheorem{lemma}{Lemma}[section]
\newtheorem{theorem}[lemma]{Theorem}
\newtheorem{corollary}[lemma]{Corollary}
\newtheorem{proposition}[lemma]{Proposition}
\newtheorem{definition}[lemma]{Definition}
\newtheorem*{proposition*}{Proposition}
\newtheorem*{theorem*}{Theorem}
\newtheorem*{definition*}{Definition}
\newtheorem*{claim*}{Claim}
\newtheorem*{notation*}{Notation}
\newtheorem{remark}[lemma]{Remark}
\newcommand{\Cu}{\mathrm{Cu}}
\newcommand{\PreCu}{\mathrm{PreCu}}
\newcommand{\ra}{\rangle}
\newcommand{\la}{\langle}
\begin{document}

\begin{abstract}
In this paper we study structural properties of the Cuntz semigroup and its functionals for continuous fields of C$^*$-algebras over finite dimensional spaces. In a variety of cases, this leads to an answer to a conjecture posed by Blackadar and Handelman. Enroute to our results, we determine when the stable rank of continuous fields of C$^*$-algebras over one dimensional spaces is one.
\end{abstract}

\maketitle
 \section*{Introduction}

%%%%%%%%%%%%
Recent years of research aiming at the classification of C$^*$-algebras demonstrated the Cuntz semigroup $\mathrm{W}(A)$, an  invariant built out of the positive elements in matrix algebras over a C$^*$-algebra $A$, to be an important ingredient. Cuntz \cite{Cu} initiated the study of the functionals on this ordered semigroup, i.e. the normalized semigroup homomorphisms into the positive reals respecting the order. These functionals are referred to as dimension functions of $A$.

Blackadar and Handelman posed two conjectures on the geometry  of the set of dimension functions on a given C$^*$-algebra $A$ in their 1982 paper \cite{BH}. Firstly, they conjectured that the set of dimension functions forms a simplex. Secondly, they conjectured that the set of lower semicontinuous dimension functions is dense in the set of all dimension functions. The relevance of the latter conjecture lies in the fact that the set of lower semicontinuous dimension functions, being in correspondence with the quasitraces in $A$, is more tractable than the set of all dimension functions. 

The second named conjecture was proved in \cite{BH} for commutative C$^*$-algebras, while the first conjecture was left completely unanswered. In \cite{Per}, the first conjecture was proved for unital C$^*$-algebras with stable rank one and real rank zero. Both conjectures were shown to hold for all unital, simple, separable, exact and $\mathcal{Z}$-stable C$^*$-algebras in \cite{BPT}, by giving a suitable representation of their Cuntz semigroup. Further, the first of the above conjectures was strengthened in \cite{BPT} to asking the set of dimension functions to form a Choquet simplex.

Taking advantage of recent advances in the computation of $\mathrm{W}(A)$ for certain C$^*$-algebras $A$, we show in this paper how we were able to answer the Blackadar-Handelman conjectures affirmatively
for certain continuous fields of C$^*$-algebras. Along the way, we obtain independently interesting results on the stable rank of continuous fields completing accomplishments by Nagisa, Osaka and Phillips in \cite{nop}. 

We would like to highlight at this point the key steps of how we were able to prove the set of dimension functions to be a Choquet simplex for certain continuous fields of C$^*$-algebras. 
Performing Grothendieck's construction on $\mathrm{W}(A)$ gives an ordered abelian group $\mathrm{K}_0^*(A)$. By a result in \cite{Goo}, the set of dimension functions forms a Choquet simplex provided $\mathrm{K}_0^*(A)$ has Riesz interpolation. An affirmative answer to the first conjecture is then linked to certain structural properties of the Cuntz semigroup. Thus, it appears to be interesting to determine when $\mathrm{K}_0^*(A)$ has interpolation for a general C$^*$-algebra $A$. 

It is known that $\mathrm{K}_0^*(A)$ has interpolation if $\mathrm{W}(A)$ does, and, as a general strategy in our setting, we concentrate on proving the latter. We show that $\mathrm{W}(A)$ has interpolation if and only if its stabilized version $\Cu(A)=\mathrm{W}(A\otimes\mathbb{K})$ (see \cite{CEI}) has interpolation, provided that $\mathrm{W}(A)$ lies naturally as a hereditary subsemigroup
in $\Cu(A)$. We prove for certain continuous fields of C$^*$-algebras $A$ that $\Cu(A)$ has interpolation. Since it is known that the inclusion $\mathrm{W}(A)\subseteq \Cu(A)$ is hereditary whenever $A$ has stable rank one, we are led at this point to the question on when continuous fields of C$^*$-algebras have stable rank one. Building on work of \cite{nop}, we settle this question in great generality: The algebra $\mathrm{C}(X,D)$ of continuous functions from a one-dimensional compact metric space into a C$^*$-algebra $D$ has stable rank one if and only if the stable rank of $D$ is one and every hereditary subalgebra $B$ of $D$ has trivial $\mathrm{K}_1$. For general continuous fields of C$^*$-algebras $A$ the condition requiring each fiber to have no $\mathrm{K}_1$-obstructions (in the above sense) is still a sufficient condition for $\mathrm{sr}(A)=1$, but it is not a necessary condition in the general case. Therefore, for certain continuous fields $A$ of stable rank one, 
we prove that $\mathrm{K}_0^*(A)$ has interpolation and this confirms the first conjecture.

Our approach to the second conjecture is based on representing $\mathrm{K}_0^*(A)$ sufficiently well into the group of affine and bounded functions on the trace space of $A$.

The paper is organized as follows. Section 1 contains our results on the stable rank of continuous fields, which can be read independently of the rest of the paper. It follows a short section on hereditariness of $\mathrm{W}(A)$ in $\Cu(A)$ for continuous fields of C$^*$-algebras $A$. Interpolation results are proved in Section 3, before we apply our results in Section 4 to answer the Blackadar-Handelman conjectures affirmatively for the C$^*$-algebras under consideration.

\section{Continuous fields of stable rank one}\label{sr}

Let $X$ be a compact metric space of dimension one. We will prove in this section that the algebra $\mathrm{C}(X,A)$ of continuous functions from $X$ into a C$^*$-algebra $A$, has stable rank one, if $A$ has no $\mathrm{K}_1$-obstructions as defined below. We also prove the converse direction in a setting of great generality and prove, as an application,  corresponding results for continuous fields. 

Recall from \cite{ABP2}, \cite{APS}, that a C$^*$-algebra $A$ has \emph{no $\mathrm{K}_1$-obstructions} provided that $A$ has stable rank one and $\mathrm{K}_1(B)=0$ for every hereditary subalgebra $B$ of $A$ (equivalently, $\mathrm{sr}(A)=1$ and $\mathrm{K}_1(I)=0$ for every closed two-sided ideal of $A$). In the case that $A$ is simple or, as proved by Lin, if $A$ has real rank zero (see \cite[Lemma 2.4]{Li}), $A$ has no $\mathrm{K}_1$-obstructions if and only if $\mathrm{K}_1(A)=0$.

We start by considering the case where $X$ is the closed unit interval. It was already noted in \cite{nop} that $\mathrm{sr}(A)=1$ and $\mathrm{K}_1(A)=0$ are necessary conditions for $\mathrm{sr}(C([0,1],A))=1$. The first condition follows from the fact that $A$ is a quotient of $C([0,1],A)$, the second is \cite[Proposition 5.2]{nop}. We show that also every hereditary subalgebra $B$ must have trivial $\mathrm{K}_1$.

\begin{proposition}\label{NecessaryConditions}
Let $A$ be any C$^*$-algebra. If $\mathrm{sr}(C([0,1],A))=1$, then $A$ has no $\mathrm{K}_1$-obstructions.
\end{proposition}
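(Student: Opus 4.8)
The plan is to reduce the whole statement to the single implication of \cite[Proposition 5.2]{nop}, namely that $\mathrm{sr}(C([0,1],D))=1$ forces $\mathrm{K}_1(D)=0$, applied not to $A$ itself but to each of its hereditary subalgebras. Since $A$ is a quotient of $C([0,1],A)$ under evaluation at a point and stable rank one passes to quotients, we already have $\mathrm{sr}(A)=1$; the case $D=A$ also gives $\mathrm{K}_1(A)=0$. By the definition recalled above, it therefore remains only to prove that $\mathrm{K}_1(B)=0$ for every hereditary subalgebra $B$ of $A$.

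Fix such a $B$ and set $C([0,1],B)=\{f\in C([0,1],A):f(t)\in B\text{ for all }t\in[0,1]\}$. First I would check that this is a hereditary subalgebra of $C([0,1],A)$: it is plainly a closed $*$-subalgebra, and if $0\le g\le f$ with $f\in C([0,1],B)$ and $g\in C([0,1],A)$, then $0\le g(t)\le f(t)\in B$ for every $t$, so $g(t)\in B$ by heredity of $B$ in $A$, whence $g\in C([0,1],B)$. The crucial step is then to transfer the hypothesis to this subalgebra, that is, to establish $\mathrm{sr}(C([0,1],B))=1$. Granting this, \cite[Proposition 5.2]{nop} applied to $B$ yields $\mathrm{K}_1(B)=0$, and together with $\mathrm{sr}(A)=1$ this is exactly the assertion that $A$ has no $\mathrm{K}_1$-obstructions.

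I expect the transfer $\mathrm{sr}(C([0,1],A))=1\Rightarrow\mathrm{sr}(C([0,1],B))=1$ to be the main obstacle, as it rests on the known but non-formal fact that stable rank one is inherited by hereditary subalgebras. The mechanism I would use is Brown's stabilisation theorem: reducing to the case of $B$ full and $\sigma$-unital by passing to the ideal it generates (which, being a hereditary subalgebra, again has stable rank one), one obtains $C([0,1],B)\otimes\cK\cong C([0,1],A)\otimes\cK$, and since the value one of the stable rank is invariant under stabilisation the conclusion $\mathrm{sr}(C([0,1],B))=1$ follows from $\mathrm{sr}(C([0,1],A))=1$. With $\mathrm{sr}(C([0,1],B))=1$ secured, the vanishing $\mathrm{K}_1(B)=0$ may be read off directly from \cite[Proposition 5.2]{nop} or, self-containedly, by approximating the straight-line path $t\mapsto(1-t)u+t\cdot 1$ joining a given invertible $u\in\widetilde B$ to the unit by a path of invertibles in $C([0,1],\widetilde B)$, which exhibits $u$ as null-homotopic. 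In either case the content is concentrated in the passage of stable rank one to the hereditary subalgebra $C([0,1],B)$.
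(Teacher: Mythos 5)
Your strategy is sound and, once unwound, rests on exactly the same three ingredients as the paper's proof: the classical fact that ideals of stable rank one algebras again have stable rank one, Brown's stabilization theorem for full hereditary subalgebras, and \cite[Proposition 5.2]{nop}. The difference is only in the order in which they are applied. The paper transfers the \emph{hypothesis} just one step: with $I$ the ideal of $A$ generated by $B$, the algebra $C([0,1],I)$ is an ideal of $C([0,1],A)$, hence has stable rank one, so \cite[Proposition 5.2]{nop} gives $\mathrm{K}_1(I)=0$; the \emph{conclusion} is then pushed down to $B$ using that a full hereditary inclusion induces an isomorphism on K-theory. You instead push the hypothesis all the way down to the hereditary subalgebra $C([0,1],B)$ of $C([0,1],A)$ (your verification that it is hereditary is correct) and then apply \cite[Proposition 5.2]{nop} to $B$ itself. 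The paper's arrangement is slightly more economical: it never needs the statement ``stable rank one passes to hereditary subalgebras,'' only the elementary ideal case, because the hereditary-versus-full issue is settled at the level of $\mathrm{K}_1$, where invariance under full hereditary inclusions is standard; your arrangement needs the heavier transfer, which you are then forced to re-derive inline from those same two facts.

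That inline derivation is where your write-up needs repair, though all the defects are fixable. First, the parenthetical justifying that the generated ideal has stable rank one ``being a hereditary subalgebra'' is circular as written: inheritance of stable rank one by hereditary subalgebras is precisely what you are establishing; what you should invoke for the ideal is the classical statement for ideals, exactly as the paper does. Second, the stabilization isomorphism is misstated: Brown's theorem yields $C([0,1],B)\otimes\mathcal{K}\cong C([0,1],I)\otimes\mathcal{K}$, since $C([0,1],B)$ is full in $C([0,1],I)$ (a partition-of-unity argument) but \emph{not} in $C([0,1],A)$ unless $B$ is full in $A$. Third, passing to the generated ideal buys fullness, not $\sigma$-unitality, which Brown's theorem requires; since $A$ here is arbitrary, a further reduction (e.g.\ separable inheritability of stable rank one) is needed --- a caveat the paper's K-theoretic step quietly shares, so it does not favour one route over the other. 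Finally, your ``self-contained'' substitute for \cite[Proposition 5.2]{nop} does not work for non-unital $B$: the path $t\mapsto(1-t)u+t\cdot 1$ has a $t$-dependent scalar part, so it lies in $C([0,1],B^\sim)$ but not in the minimal unitization $C([0,1],B)^\sim$, and $\mathrm{sr}(C([0,1],B))=1$ only gives density of invertibles in the latter; this is precisely why that proposition requires a more careful argument.
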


\begin{proof}
As already noted previous to the proposition, it is clear that $\mathrm{sr}(A)=1$ is a necessary condition.

Let $B\subseteq A$ be a hereditary subalgebra. Let $I$ denote the ideal generated by $B$. Then $C([0,1],I)$ is an ideal of $C([0,1],A)$ and therefore has stable rank one. It follows from \cite[Proposition 5.2]{nop} that $\mathrm{K}_1(I)=0$. Since $B$ is a full hereditary subalgebra of $I$, we conclude that $\mathrm{K}_1(B)=0$.
\end{proof} 

We will show that conversely, for any C$^*$-algebra $A$ with no $\mathrm{K}_1$-obstructions the stable rank of $C([0,1],A)=1$. Our proof will follow the proof of \cite[Theorem 4.3]{nop}, where the same result was shown to hold for C$^*$-algebras $A$ with $\mathrm{sr}(A)=1$, $RR(A)=0$, and $\mathrm{K}_1(A)=0$. The proof of \cite[Theorem 4.3]{nop} refers to Lemma 4.2 of the same paper. Our contribution is to prove the corresponding lemma to hold in a more general setting.

\begin{lemma}\label{InvertiblePath}
Let $A$ be a unital C$^*$-algebra with no $\mathrm{K}_1$-obstructions.

For any given $\epsilon>0$ there is some $\delta>0$ such that whenever $a$ and $b$ are two invertible contractions in $A$ with $\|a-b\|<\delta$ then there is a continuous path $(c_t)_{t\in[0,1]}$ in the invertible elements of $A$ such that $c_0=a$, $c_1=b$, and $\|c_t-a\|<\epsilon$ for all $t\in[0,1]$.
\end{lemma}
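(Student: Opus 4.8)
The plan is to reduce the statement, via polar decomposition, to a question about joining the unitary phases of $a$ and $b$ by a path that remains short after being damped by the positive part of $b$; the vanishing of $\mathrm{K}_1$ on hereditary subalgebras is what will allow such a damped path to be contracted. Write $a=u_a|a|$ and $b=u_b|b|$, with $|a|=(a^*a)^{1/2}$, $|b|=(b^*b)^{1/2}$ the (invertible, since $a,b$ are) positive parts and $u_a=a|a|^{-1}$, $u_b=b|b|^{-1}$ the unitary parts. The first observation is that, although the polar decomposition is unstable near the non-invertible elements, the positive parts are uniformly close: since $\|a^*a-b^*b\|\le 2\|a-b\|<2\delta$ for contractions and $\|X^{1/2}-Y^{1/2}\|\le\|X-Y\|^{1/2}$ for positive $X,Y$, we obtain $\||a|-|b|\|\le\sqrt{2\delta}$. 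The path $s\mapsto u_a\bigl((1-s)|a|+s|b|\bigr)$ then joins $a$ to $u_a|b|$ through invertibles (products of a unitary and a strictly positive element) with total displacement at most $\sqrt{2\delta}$.

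It remains to join $u_a|b|$ to $b=u_b|b|$. Writing such a path as $u_a z_t|b|$ with $(z_t)$ a path of unitaries from $1$ to $z:=u_a^*u_b$, the displacement from $u_a|b|$ is $\|(z_t-1)|b|\|$, and one computes $\|(z-1)|b|\|=\|u_b|b|-u_a|b|\|\le\|b-a\|+\||a|-|b|\|\le\delta+\sqrt{2\delta}$. Thus everything reduces to the following uniform sublemma: given a positive invertible contraction $h$ and a unitary $z$ with $\|(z-1)h\|<\gamma$, there is a path $(z_t)$ of unitaries from $1$ to $z$ with $\sup_t\|(z_t-1)h\|<\epsilon$, provided $\gamma$ is small enough in terms of $\epsilon$. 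Choosing $\delta$ so that $\sqrt{2\delta}$ and the resulting $\gamma=\delta+\sqrt{2\delta}$ are sufficiently small then yields the Lemma by concatenating the two segments, with a $\delta$ depending only on $\epsilon$.

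To prove the sublemma I would localise in the region where $h$ is small. Fix a threshold $\eta$ with $2\eta<\epsilon$ and a continuous $g\colon[0,1]\to[0,1]$ that is $1$ on $[\eta,1]$ and $0$ on $[0,\eta/2]$. On the support of $g(h)$ one has $h\ge\eta/2$, so $\|(z-1)g(h)\|\le\|(z-1)h\|\,\|h^{-1}g(h)\|\le 2\gamma/\eta$; hence $z$ acts as the identity, up to an error controlled by $\gamma/\eta$, off the corner $B:=\overline{eAe}$ determined by a soft indicator $e$ of $\{h\le\eta\}$. The idea is then that $z$ is, up to a small perturbation, a unitary whose nontrivial part is confined to $B$; since $\mathrm{sr}(A)=1$ and $\mathrm{K}_1(B)=0$, this localised unitary lies in the connected component of the identity and can be contracted by a path inside a unitization of $B$. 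Transporting this homotopy back to $A$ produces $(z_t)$: on the large-$h$ region each $z_t$ stays near $1$ by construction, while on the small-$h$ region the displacement $(z_t-1)h$ is damped by $h\le\eta$, so that $\sup_t\|(z_t-1)h\|$ is bounded by a quantity of the order of $\eta+\gamma/\eta$, which is $<\epsilon$ for suitable $\eta$ and small $\gamma$.

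The main obstacle is precisely this last step: turning the qualitative fact $\mathrm{K}_1(B)=0$ into a contraction of $z$ that is simultaneously short, after damping by $h$, and controlled uniformly in $a,b$. Because $e$ is only a soft projection, $z$ does not exactly preserve the corner $B$, so one must first correct $z$ by a genuinely small unitary to make its nontrivial part honestly live in $B$ (keeping track that this correction costs at most $O(\gamma/\eta)$ in the damped norm), and only then invoke $\mathrm{K}_1(B)=0$ together with stable rank one to contract it. This is also the exact point where the weaker hypothesis pays off relative to \cite{nop}: where their argument uses $RR(A)=0$ to split off an honest spectral projection of $|b|$ and work in a corner $pAp$, we must instead work with continuous functional calculus and the hereditary subalgebra $B$, which is why the vanishing of $\mathrm{K}_1$ is required not just for $A$ but for every hereditary subalgebra.
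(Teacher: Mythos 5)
Your reduction coincides exactly with the paper's own proof: both write $a=u|a|$, $b=v|b|$, join $a$ to $u|b|$ along the linear path of positive parts, and then join $u|b|$ to $b$ by a path of the form (unitary path)$\cdot|b|$, with all displacement estimates damped by $|b|$; your explicit inequality $\||a|-|b|\|\le\sqrt{2\delta}$ merely replaces the paper's appeal to \cite[Lemma 2.8]{Li2}. Moreover, the sublemma you isolate --- for a positive invertible contraction $h$ and a unitary $z$ with $\|(z-1)h\|<\gamma$, there is a path of unitaries from $z$ to $1$ moving $h$ by less than $\epsilon$, with $\gamma$ depending only on $\epsilon$ --- is verbatim the statement of \cite[Lemma 3.4]{Lsant}, which is precisely what the paper invokes as a black box. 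The paper's only additional input is the observation that the hypothesis needed there (connectedness of $U(B^\sim)$ for every hereditary subalgebra $B$ of $A$) follows from the absence of $\mathrm{K}_1$-obstructions via \cite[Theorem 2.10]{Rie}; this is exactly the mechanism you describe, including your correct diagnosis of why hereditary subalgebras, rather than corners $pAp$ as in \cite{nop}, must enter.

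The gap is the one you candidly flag: you do not prove the sublemma, and it is the entire technical content of the lemma --- the rest is elementary bookkeeping. To complete your sketch one would need: (1) two-sided confinement of $z-1$ to the corner, i.e.\ also $\|g(h)(z-1)\|\le 2\gamma/\eta$, which follows by taking adjoints since $\|(z^*-1)h\|=\|(z-1)h\|$; (2) an honest unitary in the corner: $1+(1-g(h))(z-1)(1-g(h))$ lies in $1+B$ but is only approximately unitary, so one must pass to its unitary part $w$ via polar decomposition (legitimate once $\gamma/\eta$ is small), checking $w\in 1+B$ and $\|w-z\|=O(\gamma/\eta)$; (3) connectivity inside $1+B$ rather than merely inside $U(B^\sim)$: a contracting path whose scalar part $\lambda_t$ wanders around $\mathbb{T}$ would contribute $|\lambda_t-1|\,\|h\|$ to the displacement and destroy the damping, so the path must be normalized by dividing out its scalar part; and (4) the damping estimate itself, $\|bh\|\le\|b\|\,\eta$ for $b\in B$, which requires an approximate-unit argument in the hereditary subalgebra and not only the heuristic ``$h\le\eta$ on $B$''. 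All four steps can be carried out and together reconstruct the proof of \cite[Lemma 3.4]{Lsant}; so if you are permitted to quote that lemma, your argument is complete and is essentially the paper's proof, but as a self-contained argument it establishes the statement only modulo this unproved sublemma.
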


\begin{proof}
For given $\epsilon >0$ we choose $\delta_0>0$ satisfying the conclusion of \cite[Lemma 3.4]{Lsant} for $\frac{\epsilon}{2}$, i.e., for any positive contraction $a$ and any unitary $u$ with $\|ua-a\|<\delta_0$ there is a path of unitaries $(u_t)_{t\in[0,1]}$ in $A$ such that $u_0=u$, $u_1=1_A$, and $\|u_ta-a\|<\frac{\epsilon}{2}$ for all $t\in[0,1]$. (It follows from our assumptions and \cite[Theorem 2.10]{Rie} that $U(B^\sim)$ is connected for each hereditary subalgebra $B$ of $A$, which is needed for the application of \cite[Lemma 3.4]{Lsant}.) Find $0<\delta\leq \frac{\delta_0}{2}$ such that whenever $\|a-b\|<\delta$, then $\| |a|-|b| \|<\frac{\delta_0}{2}$. (This is possible by Lemma 2.8 of \cite{Li2}.)

Take two invertible contractions $a,b$ in $A$ with $\|a-b\|<\delta$ and write $a=u|a|$ and $b=v|b|$ with unitaries $u,v\in A$. We first connect $a$ and $u|b|$ by a path of invertible elements. To do this, define a continuous path $(w_t)_{t\in[0,1]}$ by
$$w_t:=u(t|b|+(1-t)|a|),\ t\in[0,1].$$
Then $w_0=a$, $w_1=u|b|$ and, for any $t\in [0,1]$, $w_t$ is invertible and 
$$\|w_t-a\|=\|ut|b|-ut|a|\|= t\| |b|-|a|\|<\frac{\delta_0}{2}<\epsilon.$$

Next, we connect $u|b|$ and $b$ by a path of invertible elements. Since
$$\|v^*u|b|-|b|\|=\|u|b|-v|b|\|\leq \|u|b|-u|a|\|+\|u|a|-v|b|\|=\||a|-|b|\|+\|a-b\|<\delta_0,$$
an application of \cite[Lemma 3.4]{Lsant} provides us with a path of unitaries $(u_t)_{t\in[0,1]}$ in $A$ such that $u_0=v^*u$, $u_1=1$, and $\|u_t|b|-|b|\|<\frac{\epsilon}{2}$ for all $t\in[0,1]$.

Define a continuous path $(z_t)_{t\in[0,1]}$ by
$$z_t:=vu_t|b|,\ t\in[0,1].$$
Then $z_0=u|b|$, $z_1=b$ and, for each $t\in[0,1]$, $z_t$ is invertible and
$$\|z_t-a\|\leq \|vu_t|b|-v|b|\|+\|v|b|-a\|=\|u_t|b|-|b|\|+\|b-a\|<\epsilon.$$

Hence $$c_t:=\left \{\begin{array}{ll}w_{2t},&t\in[0,\frac{1}{2}]\\ z_{2t-1},&t\in[\frac{1}{2},1]\end{array}\right.$$
is the continuous path with the desired properties.
\end{proof}

\begin{theorem}\label{StableRankOfInterval}
Let $A$ be any C$^*$-algebra with $\mathrm{sr}(A)=1$. Then 
$$\mathrm{sr}(C([0,1],A))=\left \{ \begin{array}{ll} 1,&\mbox{ if $A$ has no $\mathrm{K}_1$-obstructions}\\ 2,& else.\end{array}\right.$$ 
\end{theorem}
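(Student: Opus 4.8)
My plan is to treat the two cases of the dichotomy by entirely different mechanisms: the value $1$ will come from a density-of-invertibles argument powered by Lemma~\ref{InvertiblePath}, while the value $2$ will be squeezed between the lower bound supplied by Proposition~\ref{NecessaryConditions} and a general upper estimate for $\mathrm{sr}(C([0,1],A))$. For the first case I would first reduce to $A$ unital. Passing to the unitization, one has $\mathrm{sr}(\tilde A)=1$, and $\tilde A$ has no $\mathrm{K}_1$-obstructions exactly when $A$ does: a hereditary subalgebra $B\subseteq\tilde A$ either lies in $A$, where it is hereditary and hence has $\mathrm{K}_1(B)=0$ by hypothesis, or fits into an extension $0\to B\cap A\to B\to\mathbb{C}\to 0$, whence the six-term sequence and $\mathrm{K}_1(B\cap A)=0$ give $\mathrm{K}_1(B)=0$. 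As $C([0,1],A)$ is a closed two-sided ideal of $C([0,1],\tilde A)$ and stable rank one passes to closed two-sided ideals (see \cite{Rie}), it suffices to show $\mathrm{sr}(C([0,1],\tilde A))=1$; so I assume $A$ unital.

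In the good case I would prove that the invertibles are dense in $C([0,1],A)$. Fix $f\in C([0,1],A)$ with $\|f\|<1$ and a target $\eta>0$. Applying Lemma~\ref{InvertiblePath} with $\epsilon=\eta/2$ yields some $\delta>0$, which I shrink so that $\delta\le\eta/2$; by uniform continuity I choose a partition $0=t_0<\dots<t_n=1$ so that $\|f(s)-f(t)\|<\delta/3$ whenever $s,t$ share a subinterval. Since $\mathrm{sr}(A)=1$, the invertible contractions are dense in the unit ball, so I pick invertible contractions $a_i$ with $\|a_i-f(t_i)\|<\delta/3$; then $\|a_i-a_{i+1}\|<\delta$, and Lemma~\ref{InvertiblePath} connects $a_i$ to $a_{i+1}$ through invertibles staying within $\eta/2$ of $a_i$. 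Concatenating these paths over the subintervals produces a continuous invertible-valued $g$ with $\|g-f\|<\eta/2+2\delta/3<\eta$; a scaling argument removes the normalization $\|f\|<1$. Hence $\mathrm{sr}(C([0,1],A))=1$.

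In the bad case $A$ has a $\mathrm{K}_1$-obstruction while $\mathrm{sr}(A)=1$. The contrapositive of Proposition~\ref{NecessaryConditions} gives $\mathrm{sr}(C([0,1],A))\neq 1$, so $\mathrm{sr}(C([0,1],A))\ge 2$. For the reverse inequality I would invoke the general estimate $\mathrm{sr}(C(X,A))\le\dim X+\mathrm{sr}(A)$ for the stable rank of function algebras; with $X=[0,1]$ and $\mathrm{sr}(A)=1$ this gives $\mathrm{sr}(C([0,1],A))\le 2$ (this bound, valid for arbitrary $A$, is of the type established by Rieffel and used in \cite{nop} en route to \cite[Theorem~4.3]{nop}). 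Combining the two inequalities yields $\mathrm{sr}(C([0,1],A))=2$.

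I expect the main obstacle to be organizational rather than analytic: the genuinely delicate step---connecting two nearby invertibles by a short path of invertibles with a modulus $\delta$ that is \emph{uniform} over the algebra---has already been isolated in Lemma~\ref{InvertiblePath}, and it is precisely this uniformity that lets a single partition of $[0,1]$ work simultaneously on every subinterval when the local approximants are glued. The two points that still require care are the reduction to the unital case (verifying that the unitization inherits absence of $\mathrm{K}_1$-obstructions and that stable rank one descends to the ideal $C([0,1],A)$) and sourcing the upper bound $\le 2$ in the bad case, which lies outside the machinery developed here and must be imported as a known stable-rank estimate.
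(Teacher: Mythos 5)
Your proof is correct and takes essentially the same route as the paper's: the upper bound $\mathrm{sr}(C([0,1],A))\leq 1+\mathrm{sr}(A)\leq 2$ is imported as a known estimate (the paper cites \cite{Su}), necessity comes from Proposition \ref{NecessaryConditions}, and your partition-and-concatenation argument driven by the uniform $\delta$ of Lemma \ref{InvertiblePath} is precisely what the paper means by following the proof of \cite[Theorem 4.3]{nop} with Lemma \ref{InvertiblePath} in place of \cite[Lemma 4.2]{nop}. Your explicit unitization reduction with the six-term-sequence check is sound (and worth spelling out, since Lemma \ref{InvertiblePath} is stated for unital algebras while the theorem is not); the only slip is bibliographic: the fact that stable rank one passes to closed two-sided ideals is from \cite{Rie2}, not \cite{Rie}.
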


\begin{proof}
It is known that $\mathrm{sr}(C([0,1],A))\leq 1+\mathrm{sr}(A)\leq 2$ (\cite{Su}). From Proposition \ref{NecessaryConditions} we know that for $\mathrm{sr}(C([0,1],A))=1$ it is a necessary condition that $\mathrm{K}_1(B)=0$ for all hereditary subalgebras $B$ of $A$. To show that this condition is also sufficient we follow the lines of the proof of \cite[Theorem 4.3]{nop}, applying Lemma \ref{InvertiblePath} instead of \cite[Lemma 4.2]{nop}.
\end{proof}

\begin{corollary}\label{StableRankForSimple}
Let $A$ be a simple C$^*$-algebra with $\mathrm{sr}(A)=1$ and $\mathrm{K}_1(A)=0$. Then \[ \mathrm{sr}(C([0,1],A))=1\,.\]
\end{corollary}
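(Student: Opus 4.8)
The plan is to derive this corollary as a direct specialization of Theorem \ref{StableRankOfInterval}. The theorem says that for a \Cs{} $A$ with $\mathrm{sr}(A)=1$, the algebra $C([0,1],A)$ has stable rank one precisely when $A$ has no $\mathrm{K}_1$-obstructions. So the entire task reduces to checking that the hypotheses of the corollary---$A$ simple with $\mathrm{sr}(A)=1$ and $\mathrm{K}_1(A)=0$---imply that $A$ has no $\mathrm{K}_1$-obstructions, in the sense recalled at the start of Section \ref{sr}.

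First I would invoke the remark made in the paragraph recalling the definition of ``no $\mathrm{K}_1$-obstructions'': there it is stated that when $A$ is simple, $A$ has no $\mathrm{K}_1$-obstructions if and only if $\mathrm{K}_1(A)=0$. Thus for our simple $A$ with $\mathrm{sr}(A)=1$, the condition $\mathrm{K}_1(A)=0$ is exactly equivalent to the property of having no $\mathrm{K}_1$-obstructions. Applying Theorem \ref{StableRankOfInterval} then immediately yields $\mathrm{sr}(C([0,1],A))=1$, which is the assertion of the corollary.

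I expect no real obstacle here, since the corollary is essentially a bookkeeping application of the theorem together with the simplicity reduction for the $\mathrm{K}_1$-obstruction condition. The only point deserving care is to make sure the claimed equivalence for simple algebras is genuinely available; this is the content of the cited remark, which reduces the vanishing of $\mathrm{K}_1(B)$ over all hereditary subalgebras $B$ to the single group $\mathrm{K}_1(A)$. In the simple case every nonzero hereditary subalgebra is full and hence Morita equivalent to $A$, so its $\mathrm{K}_1$ coincides with $\mathrm{K}_1(A)=0$, while the zero subalgebra is trivial. Hence the sufficiency direction of the theorem applies verbatim, and the proof is a one-line deduction.
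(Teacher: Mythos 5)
Your proposal is correct and matches the paper's intent exactly: the paper states this corollary without proof, precisely because it is the immediate specialization of Theorem \ref{StableRankOfInterval} combined with the remark in Section \ref{sr} that a simple C$^*$-algebra has no $\mathrm{K}_1$-obstructions if and only if $\mathrm{K}_1(A)=0$. Your extra justification of that remark (nonzero hereditary subalgebras of a simple algebra are full, hence have the same $\mathrm{K}_1$) is the same fact the paper itself invokes in the proof of Proposition \ref{NecessaryConditions}, so nothing is missing.
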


The previous corollary answers positively a question raised in \cite[5.9]{nop}, and also allows for a much simpler proof of \cite[Theorem 5.7]{nop} for Goodearl algebras, since these are always simple and have stable rank one (see \cite{Goo3}), as follows:

\begin{corollary}
Let $A$ be a Goodearl algebra with $\mathrm{K}_1(A)=0$. Then $\mathrm{sr}(\mathrm{C}([0,1],A))=1$.
\end{corollary}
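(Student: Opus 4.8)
The plan is to reduce the corollary directly to Corollary~\ref{StableRankForSimple} by verifying that a Goodearl algebra $A$ satisfies its three hypotheses: simplicity, stable rank one, and trivial $\mathrm{K}_1$. The hypothesis $\mathrm{K}_1(A)=0$ is handed to us in the statement, so no work is required there. For the remaining two, I would invoke the cited structural result on Goodearl algebras: as noted in the text preceding the corollary (referencing \cite{Goo3}), Goodearl algebras are always simple and always have stable rank one. Thus both $\mathrm{sr}(A)=1$ and simplicity come for free from the definition and known properties of this class.

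Having checked the hypotheses, the entire content of the proof is the single invocation: since $A$ is a simple C$^*$-algebra with $\mathrm{sr}(A)=1$ and $\mathrm{K}_1(A)=0$, Corollary~\ref{StableRankForSimple} applies verbatim and yields $\mathrm{sr}(\mathrm{C}([0,1],A))=1$. There is essentially nothing to compute; the corollary is a special case of the preceding one, isolated for the benefit of readers interested in Goodearl algebras specifically.

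The only genuine subtlety is one that Corollary~\ref{StableRankForSimple} has already absorbed. For a \emph{simple} C$^*$-algebra, having no $\mathrm{K}_1$-obstructions is equivalent to merely having $\mathrm{K}_1(A)=0$ — this equivalence is precisely the observation recorded in Section~\ref{sr} (the case where $A$ is simple). In other words, the strong-looking requirement in Theorem~\ref{StableRankOfInterval} that \emph{every} hereditary subalgebra $B$ have $\mathrm{K}_1(B)=0$ collapses, under simplicity, to the single condition on $A$ itself. Since Goodearl algebras are simple, this collapse is exactly what makes the one-line argument legitimate, and it is the reason the hypothesis $\mathrm{K}_1(A)=0$ alone suffices.

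I do not anticipate any obstacle: the proof is a straightforward specialization, and the only care needed is to cite the correct source (\cite{Goo3}) for the facts that Goodearl algebras are simple and of stable rank one, so that Corollary~\ref{StableRankForSimple} may be applied. A complete proof would read simply: ``Goodearl algebras are simple and have stable rank one by \cite{Goo3}; since also $\mathrm{K}_1(A)=0$ by hypothesis, the claim follows from Corollary~\ref{StableRankForSimple}.''
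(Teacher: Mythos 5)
Your proof is correct and is exactly the paper's argument: the text preceding the corollary justifies it by noting that Goodearl algebras are simple with stable rank one (citing \cite{Goo3}), so that Corollary \ref{StableRankForSimple} applies directly. Your added remark that simplicity collapses the no-$\mathrm{K}_1$-obstructions condition to $\mathrm{K}_1(A)=0$ is precisely the point already built into that corollary, so nothing further is needed.
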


Another application of Theorem \ref{StableRankOfInterval} is the computation of the stable rank of tensor products $A\otimes\mathcal Z$ of C$^*$-algebras $A$ with no $\mathrm{K}_1$-obstructions with the Jiang-Su algebra $\mathcal{Z}$. This was proved by Sudo in \cite[Theorem 1.1]{sudo2} assuming that $A$ has real rank zero, stable rank one, and trivial $\mathrm{K}_1$.

\begin{corollary}
Let $A$ be a C$^*$-algebra with no $\mathrm{K}_1$-obstructions. Then the stable rank of $A\otimes\mathcal{Z}$ is one.
\end{corollary}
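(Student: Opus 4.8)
The plan is to exhibit $A\otimes\mathcal{Z}$ as an inductive limit of building blocks to which Theorem~\ref{StableRankOfInterval} and its method apply, and then use that stable rank does not increase in the limit. Recall that the Jiang--Su algebra is an inductive limit $\mathcal{Z}=\varinjlim(Z_{p_n,q_n},\varphi_n)$ of prime dimension drop algebras
$$Z_{p,q}=\{f\in C([0,1],M_p\otimes M_q):f(0)\in M_p\otimes 1_q,\ f(1)\in 1_p\otimes M_q\},$$
with $p_n,q_n$ coprime. Since tensoring with $A$ is exact and commutes with inductive limits, $A\otimes\mathcal{Z}=\varinjlim A\otimes Z_{p_n,q_n}$. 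By Rieffel's estimate for the stable rank of an inductive limit (\cite{Rie}), $\mathrm{sr}(A\otimes\mathcal{Z})\leq\liminf_n\mathrm{sr}(A\otimes Z_{p_n,q_n})$, so the whole problem reduces to proving $\mathrm{sr}(A\otimes Z_{p,q})=1$ for each pair $(p,q)$.

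Next I would analyse a single building block. Since $Z_{p,q}\subseteq C([0,1],M_{pq})$, the algebra $A\otimes Z_{p,q}$ is the continuous field over $[0,1]$ consisting of those $f\in C([0,1],M_{pq}(A))$ with $f(0)\in M_p(A)\otimes 1_q$ and $f(1)\in 1_p\otimes M_q(A)$; its fibres are $M_{pq}(A)$ at interior points, $M_p(A)$ at $0$, and $M_q(A)$ at $1$. A preliminary observation is that each of these fibres again has no $\mathrm{K}_1$-obstructions: matrix amplification preserves $\mathrm{sr}=1$, the ideals of $M_n(A)$ are exactly $M_n(I)$ for ideals $I$ of $A$, and $\mathrm{K}_1(M_n(I))\cong\mathrm{K}_1(I)=0$. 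Evaluation at the two endpoints gives a short exact sequence
$$0\longrightarrow C_0((0,1),M_{pq}(A))\longrightarrow A\otimes Z_{p,q}\xrightarrow{\ (\mathrm{ev}_0,\mathrm{ev}_1)\ } M_p(A)\oplus M_q(A)\longrightarrow 0.$$
The quotient has stable rank one, and the ideal $C_0((0,1),M_{pq}(A))$ is a closed two-sided ideal of $C([0,1],M_{pq}(A))$, which has stable rank one by Theorem~\ref{StableRankOfInterval} applied to $M_{pq}(A)$; since stable rank one passes to ideals, the ideal has stable rank one as well.

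The main obstacle is the final step: an extension of a stable-rank-one algebra by a stable-rank-one algebra need not have stable rank one (the Toeplitz algebra being the standard warning), and the genuine content lies in lifting invertibles from the quotient to the extension. Here the hypotheses pay off, since the relevant connecting map is the index map
$$\mathrm{K}_1\big(M_p(A)\oplus M_q(A)\big)\longrightarrow \mathrm{K}_0\big(C_0((0,1),M_{pq}(A))\big)\cong \mathrm{K}_1(M_{pq}(A))\cong \mathrm{K}_1(A)=0,$$
and likewise at every matrix level and over every ideal, so the obstruction to lifting invertibles vanishes. Concretely, I would run the argument behind Theorem~\ref{StableRankOfInterval}---approximating $f$ away from a neighbourhood of the two endpoints and then correcting near $0$ and $1$ by the unitary paths furnished by Lemma~\ref{InvertiblePath}---now on the continuous field $A\otimes Z_{p,q}$ rather than on a trivial field. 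This is exactly the point at which ``no $\mathrm{K}_1$-obstructions'' on the fibres $M_{pq}(A)$, $M_p(A)$, $M_q(A)$ is used, since, as in the proof of Lemma~\ref{InvertiblePath}, it guarantees via \cite{Rie} that the unitary groups of all hereditary subalgebras involved are connected. This yields $\mathrm{sr}(A\otimes Z_{p,q})=1$, and feeding it back into the inductive limit estimate completes the proof.
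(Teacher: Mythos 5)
Your reduction half coincides with the paper's proof: both decompose $A\otimes\mathcal Z=\varinjlim (A\otimes Z_{p_n,q_n})$, invoke Rieffel's inductive-limit estimate $\mathrm{sr}(\varinjlim A_i)\le\liminf\mathrm{sr}(A_i)$ (note this is \cite{Rie2} in this paper's bibliography, not \cite{Rie}), and feed $M_{pq}(A)$ --- which, as you correctly check, inherits the absence of $\mathrm{K}_1$-obstructions --- into Theorem~\ref{StableRankOfInterval}. You diverge at the treatment of the building block: you present $A\otimes Z_{p,q}$ as an extension
\[
0\longrightarrow C_0((0,1),M_{pq}(A))\longrightarrow A\otimes Z_{p,q}\longrightarrow M_p(A)\oplus M_q(A)\longrightarrow 0
\]
and propose to lift invertibles, whereas the paper presents it as a pullback of $C([0,1],M_{pq}(A))$ and $M_p(A)\oplus M_q(A)$ over $M_{pq}(A)\oplus M_{pq}(A)$ along the surjective evaluation map $(\lambda_0,\lambda_1)$, and then simply quotes \cite[Theorem 4.1 (i)]{bp}.

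The gap is in your final step. As you yourself note, an extension of two stable-rank-one algebras need not have stable rank one, so some theorem must convert your K-theoretic observation into the conclusion; vanishing of the index map $\mathrm{K}_1(M_p(A)\oplus M_q(A))\to\mathrm{K}_0(C_0((0,1),M_{pq}(A)))$ (here trivially, since the domain is zero) only kills the \emph{stable} obstruction to lifting invertibles, and by itself it does not yield $\mathrm{sr}=1$ for the extension. Your proposed substitute --- ``run the argument behind Theorem~\ref{StableRankOfInterval} on the field $A\otimes Z_{p,q}$'' --- is precisely the missing nontrivial content: that theorem, and \cite[Theorem 4.3]{nop} behind it, are proved only for trivial fields $C([0,1],D)$, and adapting the approximate-and-patch argument to a field whose fibres shrink at the endpoints (the approximants near $0$ must be invertible inside $M_p(A)\otimes 1_q$, and similarly at $1$) is genuine work that you do not carry out. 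Two clean ways to close the gap: (a) use the well-known extension estimate $\mathrm{sr}(E)\le\max\{\mathrm{sr}(I),\mathrm{sr}(E/I),\mathrm{csr}(E/I)\}$, observing that $\mathrm{sr}(E/I)=1$ and $\mathrm{K}_1(E/I)=0$ force $\mathrm{GL}_m((E/I)^\sim)$ to be connected for every $m$, whence $\mathrm{csr}(E/I)=1$; or (b) do what the paper does and apply \cite[Theorem 4.1 (i)]{bp} to the pullback description, which is the tool that packages exactly this lifting argument. With either substitution your proof becomes complete and runs essentially parallel to the paper's.
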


\begin{proof}
It is well-known that we can write $A\otimes \mathcal{Z}$ as an inductive limit
$$A\otimes\mathcal{Z}=\lim_{i\rightarrow \infty}A\otimes Z_{p_i,q_i},$$
with pairs of co-prime numbers $(p_i,q_i)$ and prime dimension drop algebras $$Z_{p_i,q_i}=\{f\in C([0,1],M_{p_i}\otimes M_{q_i}\ |\ f(0)\in I_{p_i}\otimes M_{q_i},\ f(1)\in M_{p_i} \otimes I_{q_i} \}.$$
Since the stable rank of inductive limit algebras satisfies that $\mathrm{sr}(\lim_{i\rightarrow\infty}(A_i))\leq \liminf \mathrm{sr}(A_i)$ (\cite{Rie2}) it suffices to show that the stable rank of each $Z_{p_i,q_i}\otimes A$ is one.

Fix two co-prime numbers $p$ and $q$ and write $Z_{p,q}\otimes A$ as a pullback
$$\xymatrix{Z_{p,q}\otimes A\ar@{-->}[r]\ar@{-->}[d] & M_p(A)\oplus M_q(A) \ar[d]^\phi \\ C([0,1], M_{pq}(A)) \ar@{->>}[r]^{(\lambda_0,\lambda_1)} & M_{pq}(A)\oplus M_{pq}(A)}$$
with maps $\lambda_i(f)=f(i)$ and $\phi(A,B)=\left ( A\otimes I_q, I_p\otimes B\right )$. 

Our assumptions together with Theorem \ref{StableRankOfInterval} imply that $\mathrm{sr}(C([0,1],M_{pq}(A)))=1$. Further, $\mathrm{sr}(M_{m}(A))=1$ for all $m\in\mathbb{N}$, and the map from left to right in the pullback diagram is surjective. An application of \cite[Theorem 4.1 (i)]{bp} implies that $\mathrm{sr}(Z_{p,q}\otimes A)=1$.
\end{proof}

We now turn our attention to $\mathrm{C}(X,A)$ for compact metric spaces $X$ with $\dim(X)=1$. From Theorem \ref{StableRankOfInterval} it follows that the stable rank of $\mathrm{C}(X,A)$ is one whenver the stable rank of $C([0,1],A)$ is one.

\begin{corollary}\label{CorDimOne}
Let $A$ be a separable C$^*$-algebra $A$ with no $\mathrm{K}_1$-obstructions, and let $X$ be a compact metric space of dimension one. Then $\mathrm{sr}(\mathrm{C}(X,A))=1$.
\end{corollary}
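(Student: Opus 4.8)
The plan is to reduce the general one-dimensional space $X$ to the unit interval and then invoke Theorem \ref{StableRankOfInterval}. Since $A$ has no $\mathrm{K}_1$-obstructions it in particular has $\mathrm{sr}(A)=1$, so Theorem \ref{StableRankOfInterval} already gives $\mathrm{sr}(\mathrm{C}([0,1],A))=1$. Hence it suffices to establish the reduction announced just before the statement: for an arbitrary C$^*$-algebra $A$ and a compact metric space $X$ with $\dim(X)=1$, the equality $\mathrm{sr}(\mathrm{C}([0,1],A))=1$ forces $\mathrm{sr}(\mathrm{C}(X,A))=1$.

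First I would appeal to dimension theory to write $X$ as an inverse limit $X=\varprojlim(X_i,\pi_i)$ of finite one-dimensional simplicial complexes, that is, finite graphs; this is a standard consequence of the assumption that $X$ is a compact metric space of covering dimension at most one. Passing to function algebras turns this inverse limit of spaces into an inductive limit of C$^*$-algebras, $\mathrm{C}(X,A)=\varinjlim \mathrm{C}(X_i,A)$, with connecting maps induced by the bonding maps $\pi_i$. By Rieffel's inequality $\mathrm{sr}(\varinjlim B_i)\le \liminf \mathrm{sr}(B_i)$ (\cite{Rie2}), already used above, it is then enough to show that $\mathrm{sr}(\mathrm{C}(G,A))=1$ for every finite graph $G$.

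To treat a finite graph $G$ I would induct on the number of edges, adjoining one edge at a time (a disconnected $G$ being a finite direct sum of its components). Writing $G_k$ for the subgraph built from the first $k$ edges, the algebra $\mathrm{C}(G_{k+1},A)$ sits in a pullback square
$$\xymatrix{\mathrm{C}(G_{k+1},A)\ar[r]\ar[d] & \mathrm{C}([0,1],A) \ar[d] \\ \mathrm{C}(G_k,A) \ar[r] & A^{m}}$$
where the right-hand vertical map evaluates a function on the new edge $\cong[0,1]$ at those of its endpoints attached to $G_k$, and the bottom map restricts $G_k$-functions to the same vertices; here $m\in\{0,1,2\}$ counts the attaching endpoints (with the two endpoints of a loop counted separately, and $m=0$ yielding a direct sum). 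The edge-evaluation map into $A^{m}$ is surjective, since one may interpolate prescribed values at the finitely many points $0,1\in[0,1]$, and both corner algebras $\mathrm{C}(G_k,A)$ and $\mathrm{C}([0,1],A)$ have stable rank one — the former by the inductive hypothesis (base case a single edge, for which $\mathrm{C}(G_1,A)=\mathrm{C}([0,1],A)$), the latter by assumption — while $\mathrm{sr}(A^{m})=1$ because $A$ has stable rank one. An application of \cite[Theorem 4.1 (i)]{bp}, exactly as in the dimension-drop computation above, then yields $\mathrm{sr}(\mathrm{C}(G_{k+1},A))=1$. Combining the three steps gives $\mathrm{sr}(\mathrm{C}(X,A))=1$.

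The main obstacle I anticipate is bookkeeping in the graph step rather than any deep estimate: one must set up the inverse-limit presentation of $X$ by finite graphs cleanly, verify that it induces the claimed inductive limit of function algebras, and confirm that each edge-adjunction genuinely is a pullback over a surjective evaluation map (with loops and high-degree vertices absorbed into the count $m$). Once these diagrams are in place, the stable-rank conclusion at every stage is a direct invocation of \cite[Theorem 4.1 (i)]{bp}, so the argument requires nothing beyond Theorem \ref{StableRankOfInterval} and the interval case.
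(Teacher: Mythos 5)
Your proof is correct and follows essentially the same route as the paper: reduce to finite graphs by writing $X$ as a countable inverse limit of graphs and invoking Rieffel's inequality for inductive limits, then settle the graph case by a pullback over vertex evaluations together with Theorem \ref{StableRankOfInterval} and \cite[Theorem 4.1 (i)]{bp}. The only difference is organizational: the paper realizes $\mathrm{C}(G,A)$ for a finite graph $G$ as a single pullback of $\mathrm{C}([0,1],A^m)$ and $\mathrm{C}(V,A)$ over $A^m\oplus A^m$, whereas you adjoin one edge at a time inductively --- the same ingredients either way.
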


\begin{proof}
If $X$ is a finite graph with $m$ edges and $V$ is its set of vertices, then $\mathrm{C}(X,A)$ can be written as a pullback 
$$\xymatrix{\mathrm{C}(X,A)\ar@{-->}[r]\ar@{-->}[d] & C([0,1],A^m) \ar[d] \\  A^n \cong C(V,A) \ar[r] & A^m\oplus A^m}$$
with maps $\mathrm{C}([0,1],A^m)\to A^m\oplus A^m$ given by evaluation at vertices (hence surjective) and $\mathrm{C}(V,A)\to A^m\oplus A^m$  suitably defined (see, e.g. \cite[Section 3]{APS}). It is clear that the two entries in the bottom of the diagram have stable rank one. By Theorem \ref{StableRankOfInterval} the entry in the upper right corner has stable rank one. It then follows from \cite[Theorem 4.1 (i)]{bp} that $\mathrm{sr}(\mathrm{C}(X,A))=1$. 

Finally, if $X$ is one dimensional, we may write $X$ as a (countable) inverse limit of finite graphs, and so $\mathrm{C}(X,A)$ is an inductive limit of algebras that have stable rank one. Therefore the stable rank of $\mathrm{C}(X,A)$ is also one.
\end{proof}

To prove the corresponding result to Theorem \ref{StableRankOfInterval} for more general spaces of dimension one, we need to generalize Proposition \ref{NecessaryConditions}. The proof is inspired by \cite[Proposition 5.2]{nop}. We thank Hannes Thiel for providing us with an argument that allows us to drop unnecessary assumptions in an earlier draft.

Recall that, given a compact metric space $X$, a continuous map $f\colon X\to [0,1]$ is \emph{essential} if whenever a continuous map $g\colon X\to [0,1]$ that agrees with $f$ on $f^{-1}(\{0,1\})$ must be surjective. A classical result of Alexandroff shows that if $X$ is one-dimensional space, then there is an essential map from $X$ to $[0,1]$. (A suitable generalization of the above definition can be used to characterize when a space has dimension $\geq n$, see \cite{Eng}.)

\begin{proposition}
Let $A$ be any C$^*$-algebra and $X$ be a compact metric space with $\dim(X)=1$. If $\mathrm{sr}(\mathrm{C}(X,A))=1$, then $A$ has no $\mathrm{K}_1$-obstructions.
\end{proposition}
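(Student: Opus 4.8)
The plan is to follow the template of Proposition \ref{NecessaryConditions}, reducing the whole statement to a single analytic fact about the fibre $A$ and then replacing the role played there by the interval with an essential map supplied by Alexandroff's theorem. First I would dispose of the easy half: evaluation at a point $x_0\in X$ exhibits $A$ as a quotient of $\mathrm{C}(X,A)$, and since stable rank one passes to quotients, $\mathrm{sr}(A)=1$. Next, let $B\subseteq A$ be a hereditary subalgebra and let $I$ be the closed two-sided ideal it generates. Then $\mathrm{C}(X,I)\cong \mathrm{C}(X)\otimes I$ is an ideal of $\mathrm{C}(X,A)$, so it again has stable rank one, because stable rank one is inherited by ideals. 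Exactly as in Proposition \ref{NecessaryConditions}, once $\mathrm{K}_1(I)=0$ is known, the fullness of $B$ in $I$ gives $\mathrm{K}_1(B)=0$. Thus the statement reduces to the following assertion, which is the genuine generalization of \cite[Proposition 5.2]{nop} from $[0,1]$ to an arbitrary one-dimensional compact metric space: if $\dim(X)=1$ and $\mathrm{sr}(\mathrm{C}(X,I))=1$, then $\mathrm{K}_1(I)=0$.

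To prove this I would argue by contradiction, assuming $\mathrm{K}_1(I)\neq 0$ and exhibiting an element of a matrix algebra over $\widetilde{\mathrm{C}(X,I)}$ that is not a norm-limit of invertibles, contradicting stable rank one. Note first that, since a one-dimensional compact metric space need not contain any arc (e.g.\ the pseudo-arc), one cannot simply restrict $\mathrm{C}(X,I)$ to an embedded interval and invoke the result of \cite{nop}; this is precisely the point at which an essential map must replace an embedded interval. Concretely, I would fix a unitary $u\in M_k(I^\sim)$ with scalar part $1$ representing a nonzero class in $\mathrm{K}_1(I)$, take an essential map $f\colon X\to[0,1]$ as provided by Alexandroff's theorem, and couple $f$ with $u$ to build an element whose ``radial'' parameter is governed by $f$ and whose ``angular winding'' is recorded by $[u]$. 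The effect is a two-dimensional index obstruction — morally the pairing of $[u]\in\mathrm{K}_1(I)$ with the essential class of $f$ — even though $X$ itself is only one-dimensional, the second dimension being contributed by the $\mathrm{K}_1$-direction of $I$.

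The hard part will be the construction of this non-invertible element together with the verification that no nearby element is invertible, and it is in the latter step that essentiality does the real work. The argument I envisage is a small-perturbation/continuous-functional-calculus argument showing that any sufficiently close invertible approximant would, on $f^{-1}(\{0,1\})$, restrict to data trivializing $[u]$; essentiality of $f$ then forces these trivializations to extend continuously across all of $X$ (a map agreeing with $f$ on $f^{-1}(\{0,1\})$ cannot avoid the value $\tfrac12$), which in turn forces $[u]=0$ and yields the contradiction. The point of using an essential, rather than merely surjective, map is exactly that the obstruction at the endpoints cannot be homotoped away.

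Finally, I would attend to the routine bookkeeping that surrounds this core: passing correctly between $\widetilde{\mathrm{C}(X,I)}$ and $\mathrm{C}(X,I^{\sim})$ (the former being the proper subalgebra of functions with constant scalar part), stabilizing to absorb the matrix size $k$ and to reduce to a standard representative of the $\mathrm{K}_1$-class, and ensuring that the non-unital reductions are compatible with the fullness step $\mathrm{K}_1(B)=\mathrm{K}_1(I)$ carried out exactly as in Proposition \ref{NecessaryConditions}. None of these should present difficulties beyond the central construction, which remains the only genuinely new ingredient over the interval case.
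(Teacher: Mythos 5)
Your architecture coincides exactly with the paper's: the easy half via evaluation, the reduction to ideals and then to full hereditary subalgebras exactly as in Proposition \ref{NecessaryConditions}, and the recognition that an essential map $f\colon X\to[0,1]$ (rather than an embedded arc, which a one-dimensional continuum such as the pseudo-arc need not contain) must replace the interval of \cite{nop}. Even your element is morally the paper's: with $S=f^{-1}(\{0\})$ and $T=f^{-1}(\{1\})$, the paper takes $v(x)=(1-\mathrm{d}(x,T))_+\cdot u+(1-\mathrm{d}(x,S))_+\cdot 1$, a ``straight-line'' interpolation with $v_{|S}=1$ and $v_{|T}=u$, where $u$ is a unitary not connected to $1$ (stable rank one identifies $\mathrm{K}_1$ with components of unitaries, so such $u$ exists if $\mathrm{K}_1\neq 0$).

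The problem is that the step you defer as ``the hard part'' is where the entire proof lives, and your sketch of it does not work as stated. No index pairing and no functional calculus enter the verification; the only mechanism is that the identity component $A^{-1}_0$ is open \emph{and closed} in $A^{-1}$. Granting stable rank one, choose a continuous $w\colon X\to A^{-1}$ with $\|v-w\|<1$. Then $S\subseteq w^{-1}(A^{-1}_0)$ (on $S$ the approximant is within $1$ of the identity) and $T\subseteq w^{-1}(A^{-1}\setminus A^{-1}_0)$ (on $T$ it is within $1$ of $u$, hence lies in the coset $uA^{-1}_0$, disjoint from $A^{-1}_0$). Since $A^{-1}_0$ is clopen in $A^{-1}$, the sets $w^{-1}(A^{-1}_0)$ and $w^{-1}(A^{-1}\setminus A^{-1}_0)$ form a clopen partition of $X$; the function equal to $0$ on the first piece and $1$ on the second is then continuous, agrees with $f$ on $f^{-1}(\{0,1\})$, and omits the value $\tfrac{1}{2}$, contradicting essentiality. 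Note also that your causal chain runs backwards: a close invertible approximant does \emph{not} ``restrict to data trivializing $[u]$'' on $f^{-1}(\{0,1\})$ --- on $T$ it lies in the component of $u$, which trivializes nothing --- and essentiality does not ``force trivializations to extend''; rather, essentiality forbids the clopen separation of $S$ from $T$ that the approximant would create, and triviality of $[u]$ (or, as the paper arranges it, a direct contradiction) is the \emph{conclusion}. Without the clopen-partition observation your appeal to essentiality has nothing to act on, so the proposal has a genuine gap at its central step, even though the surrounding reductions and the choice of tools are exactly right.
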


\begin{proof}
Since $A$ is a quotient of $\mathrm{C}(X,A)$ it is clear that the stable rank of $A$ must be one. Further, arguing as in the proof of Proposition \ref{NecessaryConditions}, it suffices to show that $\mathrm{K}_1(A)=0$.

Suppose, to reach a contradiction, that there is a unitary $u$ in $A$ not connected to $1$. Let $\mathrm{d}$ be the metric that induces the topology on $X$. Since $X$ is one-dimensional, there is an essential map $f\colon X\to [0,1]$. Let $S=f^{-1}(\{0\})$ and $T=f^{-1}(\{1\})$, which are disjoint closed sets and hence $\mathrm{d}(S,T)>0$. We may assume that $\mathrm{d}(S,T)=1$. Now define a continuous function $v\colon X\to A$ as follows:
\[
v(x)=(1-\mathrm{d}(x,T))_+\cdot u+(1-\mathrm{d}(x,S))_+\cdot 1\,.
\]
Notice that, by definition, $v_{|S}=1$ and $v_{|T}=u$. As $\mathrm{C}(X,A)$ has stable rank one, there is a map $w\colon X\to A^{-1}$ such that $||v-w||<1$. Denote by $A^{-1}_0$ the connected component of $A^{-1}$ containing the identity. We have that $S\subseteq w^{-1}(A^{-1}_0)$ and $T\subseteq w^{-1}(A^{-1}\setminus A^{-1}_0)$ as $u\notin A^{-1}_0$ by assumption. Note that $A^{-1}_0$ is both open and closed in $A^{-1}$, so by continuity of $w$ we obtain that $S':=w^{-1}(A^{-1}_0)$ and $T':=w^{-1}(A^{-1}\setminus A^{-1}_0)$ form a partition of $X$ consisting of clopen sets. Thus we can define a (non-surjective) continuous function $h\colon X\to [0,1]$ such that $h(S')=0$ and $h(T')=1$, and this contradicts the essentiality of $f$.

\end{proof}

We collected everything for a repetition of the arguments of the proof of Theorem \ref{StableRankOfInterval} in a more general setting.

\begin{theorem}\label{StableRankOfDimOne}
Let $A$ be any C$^*$-algebra with $\mathrm{sr}(A)=1$ and $X$ be a  compact metric space of dimension one. Then 
$$\mathrm{sr}(\mathrm{C}(X,A))=\left \{ \begin{array}{ll} 1,&\mbox{ if $A$ has no $\mathrm{K}_1$-obstructions}\\ 2,& \text{else.}\end{array}\right.$$ 
\end{theorem}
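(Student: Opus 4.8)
The plan is to assemble the statement from the two one-directional facts already in place, using that the stable rank of any C$^*$-algebra is an integer that is at least one. Thus it suffices to (a) establish the upper bound $\mathrm{sr}(\mathrm{C}(X,A))\leq 2$ in all cases, (b) prove $\mathrm{sr}(\mathrm{C}(X,A))\leq 1$ when $A$ has no $\mathrm{K}_1$-obstructions, and (c) rule out the value $1$ in the complementary case. Point (c) is immediate from the Proposition preceding this theorem: were $\mathrm{sr}(\mathrm{C}(X,A))=1$, then $A$ would have no $\mathrm{K}_1$-obstructions, contrary to assumption, so $\mathrm{sr}(\mathrm{C}(X,A))\neq 1$ there; combined with the upper bound this forces the value $2$.

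For the no-obstruction direction I would reproduce the argument of Corollary \ref{CorDimOne}, observing that its separability hypothesis on $A$ is never actually used. Since $X$ is a compact metric space of dimension one, it is an inverse limit $X=\varprojlim X_i$ of finite graphs, and hence $\mathrm{C}(X,A)=\varinjlim \mathrm{C}(X_i,A)$. For a finite graph $X_i$ with $m$ edges and $n$ vertices, $\mathrm{C}(X_i,A)$ is the pullback of $\mathrm{C}([0,1],A^m)$ and $A^n$ along a surjective evaluation map, exactly as in the diagram of Corollary \ref{CorDimOne}. The direct sum $A^m$ has stable rank one and, since its hereditary subalgebras are precisely the direct sums of hereditary subalgebras of the $m$ factors (so that their $\mathrm{K}_1$-groups all vanish), it has no $\mathrm{K}_1$-obstructions; Theorem \ref{StableRankOfInterval} therefore gives $\mathrm{sr}(\mathrm{C}([0,1],A^m))=1$, while $\mathrm{sr}(A^n)=1$. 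By \cite[Theorem 4.1 (i)]{bp} we obtain $\mathrm{sr}(\mathrm{C}(X_i,A))=1$, and Rieffel's estimate $\mathrm{sr}(\varinjlim A_i)\leq \liminf \mathrm{sr}(A_i)$ from \cite{Rie2} yields $\mathrm{sr}(\mathrm{C}(X,A))=1$, proving (b).

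The upper bound (a) is obtained by running the same finite-graph computation without the no-obstruction hypothesis: by \cite{Su} one has $\mathrm{sr}(\mathrm{C}([0,1],A^m))\leq 1+\mathrm{sr}(A^m)=2$ and $\mathrm{sr}(A^n)=1$, so \cite[Theorem 4.1 (i)]{bp} gives $\mathrm{sr}(\mathrm{C}(X_i,A))\leq 2$ for each finite graph $X_i$, and the inductive-limit estimate propagates this to $\mathrm{sr}(\mathrm{C}(X,A))\leq 2$. Together with the exclusion of the value $1$ from point (c), this settles the remaining case and completes the proof.

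The only genuine point requiring attention, and the step I expect to be the main (albeit mild) obstacle, is verifying that the removal of the separability assumption on $A$ leaves the argument of Corollary \ref{CorDimOne} intact. This is routine: the decomposition $X=\varprojlim X_i$ into finite graphs uses only that $X$ is a compact metric space, and every stable-rank permanence property invoked, namely Theorem \ref{StableRankOfInterval} for the interval, \cite[Theorem 4.1 (i)]{bp} for pullbacks with a surjective leg, and \cite{Rie2} for inductive limits, holds for arbitrary C$^*$-algebras. Beyond this bookkeeping I anticipate no substantive difficulty.
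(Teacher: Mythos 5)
Your proposal is correct and takes essentially the same route as the paper: the paper's own proof is exactly the assembly you describe — the unnumbered proposition preceding the theorem (necessity of no $\mathrm{K}_1$-obstructions), Corollary \ref{CorDimOne} (sufficiency), and the upper bound $\mathrm{sr}(\mathrm{C}(X,A))\leq 2$ — compressed into the remark that one repeats the arguments of Theorem \ref{StableRankOfInterval} in the more general setting. Your two additional verifications (that separability of $A$ is never used in Corollary \ref{CorDimOne}, and that the bound $\mathrm{sr}(\mathrm{C}(X,A))\leq 2$ follows by running the finite-graph pullback and inductive-limit argument without the no-obstruction hypothesis) supply precisely the details the paper leaves implicit.
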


Although we won't need it in the following, we would like to point out that Theorem \ref{StableRankOfDimOne} determines the real rank of certain algebras by an application of the well-known  inequality stating that $RR(A)\leq 2\mathrm{sr}(A)-1$ and \cite[Proposition 5.1]{nop}.

\begin{corollary}
Let $A$ be C$^*$-algebra with no $\mathrm{K}_1$-obstructions, and let $X$ be a compact metric space of dimension one. Then $RR(\mathrm{C}(X,A))=1$.
\end{corollary}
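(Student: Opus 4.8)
The plan is to sandwich $RR(\mathrm{C}(X,A))$ between $1$ and $1$, using precisely the two ingredients flagged in the remark immediately preceding the statement: the stable-rank computation of Theorem \ref{StableRankOfDimOne} together with the inequality $RR(B)\leq 2\,\mathrm{sr}(B)-1$ for the upper bound, and \cite[Proposition 5.1]{nop} for the lower bound.

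For the upper bound, I would first note that since $A$ has no $\mathrm{K}_1$-obstructions it in particular satisfies $\mathrm{sr}(A)=1$, so the hypotheses of Theorem \ref{StableRankOfDimOne} are met and that theorem yields $\mathrm{sr}(\mathrm{C}(X,A))=1$. Applying the quoted inequality $RR(B)\leq 2\,\mathrm{sr}(B)-1$ to $B=\mathrm{C}(X,A)$ then gives
$$RR(\mathrm{C}(X,A))\leq 2\cdot 1-1=1.$$
This is the routine half.

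For the lower bound I would invoke \cite[Proposition 5.1]{nop}: because $\dim(X)=1$, the space $X$ admits an essential map onto $[0,1]$ (as recalled above via Alexandroff's theorem), and this prevents $\mathrm{C}(X,A)$ from having real rank zero, so that $RR(\mathrm{C}(X,A))\geq 1$. Combining the two estimates forces $RR(\mathrm{C}(X,A))=1$.

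The main point requiring care is matching the exact formulation of \cite[Proposition 5.1]{nop} to the present generality. If that proposition is stated only for the interval $[0,1]$ (or for a restricted class of coefficient algebras), I would need to transfer the lower bound to an arbitrary one-dimensional compact metric space $X$, most naturally by exploiting the essential map $f\colon X\to[0,1]$ to produce a self-adjoint element of $\mathrm{C}(X,A)$ (for instance $x\mapsto f(x)\cdot 1$ in the unitization) whose spectrum fills an interval and which therefore cannot be approximated by self-adjoints of finite spectrum; one should also record the harmless standing assumption $A\neq 0$ and keep the unital versus non-unital conventions for $\mathrm{sr}$ and $RR$ consistent between the two cited estimates. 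Everything else is immediate once $\mathrm{sr}(\mathrm{C}(X,A))=1$ is in hand.
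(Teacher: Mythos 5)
Your two-sided estimate is exactly the paper's own proof: the paper justifies this corollary solely by the sentence preceding it, namely the upper bound $RR(\mathrm{C}(X,A))\leq 2\,\mathrm{sr}(\mathrm{C}(X,A))-1=1$ coming from Theorem \ref{StableRankOfDimOne}, and the lower bound $RR(\mathrm{C}(X,A))\geq 1$ from \cite[Proposition 5.1]{nop}. So your main line is a faithful reconstruction of what the authors intended.

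The one genuinely flawed point is the mechanism you sketch in your fallback for transferring the lower bound to a general one-dimensional $X$. First, a small issue: the element $x\mapsto f(x)\cdot 1$ does not lie in $\mathrm{C}(X,A)^\sim$ unless $A$ is unital, since the unitization adjoins only \emph{constant} scalar functions. More seriously, the inference ``its spectrum fills an interval and therefore it cannot be approximated by self-adjoints of finite spectrum'' is a non sequitur: in any real rank zero C$^*$-algebra (e.g.\ the CAR algebra) there are self-adjoint elements whose spectrum is all of $[0,1]$, and by Brown--Pedersen every one of them \emph{is} approximable by finite-spectrum self-adjoints. What makes the lower bound work is not the shape of the spectrum but essentiality of $f$ combined with a clopen-decomposition argument, exactly parallel to the paper's essential-map proof generalizing Proposition \ref{NecessaryConditions}. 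Concretely: pick a positive norm-one $e\in A$ and set $h(x)=f(x)e\in\mathrm{C}(X,A)_{sa}$. If $\mathrm{C}(X,A)$ had real rank zero, Brown--Pedersen would give a finite-spectrum self-adjoint $k$ with $\|h-k\|<1/4$; then $p=\chi_{(1/2,\infty)}(k)$ is a projection in $\mathrm{C}(X,A)$, with $p(x)$ the corresponding spectral projection of $k(x)$, which vanishes on $S=f^{-1}(0)$ (there $\|k(x)\|<1/4$) and is nonzero on $T=f^{-1}(1)$ (there $k(x)$ has spectrum near $1$). Since the vanishing locus of a continuous projection-valued function is clopen ($\|p(x)\|$ takes only the values $0$ and $1$), the sets $\{x:p(x)=0\}\supseteq S$ and $\{x:p(x)\neq 0\}\supseteq T$ form a clopen partition of $X$, and the function equal to $0$ on the first set and $1$ on the second agrees with $f$ on $f^{-1}(\{0,1\})$ without being surjective, contradicting essentiality of $f$. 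With this replacement (or the analogous argument using invertible self-adjoint approximants when $A$ is unital), your proof is complete and independent of the exact generality in which \cite[Proposition 5.1]{nop} is stated.
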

 
In view of Theorem \ref{StableRankOfDimOne}, it is natural to ask if the same can be obtained for continuous fields over $X$.  We recall the main definitions (see e.g. \cite{nilsen,Dixmier,dadarlat}). If $X$ is a compact Hausdorff space, a \emph{$\mathrm{C}(X)$-algebra} is a C$^*$-algebra together with a unital $^*$-homomorphism $\mathrm{C}(X)\to Z(\mathcal {M}(A))$. If $Y\subseteq X$ is a closed set, let $A(Y)=A/\mathrm{C}_0(X\setminus Y)A$, which also becomes a $\mathrm{C}(X)$-algebra. Denote by $\pi_Y\colon A\to A(Y)$ the natural quotient map. In the case that $Y=\{x\}$, we then write $A_x$ and $\pi_x$. The algebra $A_x$ is referred to as the \emph{fiber} of $A$ at $x$.

For a $\mathrm{C} (X)$-algebra $A$, the map $x\mapsto ||a(x)||$ is upper semicontinuous, and if it is continuous, we then say that $A$ is a \emph{continuous field}.

The natural question is then whether a continuous field of C$^*$-algebras $A$ over a one-dimensional space $X$, all of whose fibers have no $\mathrm{K}_1$-obstructions, is necessarily of stable rank one. And, conversely, if $\mathrm{sr}(A)=1$ for a continuous field $A$ over a one-dimensional space $X$ implies $\mathrm{K}_1(A_x)=0$ for all $x\in X$. We attain a positive answer to the first named question, but the second question can be answered in the negative, even for $X=[0,1]$ as we show below. 

\begin{theorem}\label{StableRankOfCtsFields}
Let $X$ be a one-dimensional, compact metric space, and let $A$ be a continuous field over $X$ such that each fiber $A_x$ has no $\mathrm{K}_1$-obstructions. Then $\mathrm{sr}(A)=1$.
\end{theorem}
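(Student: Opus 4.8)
The plan is to prove that the invertibles are dense in the unitization of $A$ by a direct gluing argument over a fine closed cover of $X$, using $\dim X=1$ to keep the combinatorics of the cover one-dimensional. First I would reduce to the unital case: the unitization $\tilde A$ is again a $\mathrm{C}(X)$-algebra, now with fibers $\widetilde{A_x}$, and the six-term sequence associated with $0\to A_x\to\widetilde{A_x}\to\mathbb C\to 0$ shows that $\widetilde{A_x}$ again has no $\mathrm{K}_1$-obstructions whenever $A_x$ does (stable rank one passes to the unitization, and $\mathrm{K}_1$ of every ideal stays trivial). So we may assume $A$ unital, fix $a\in A$ and $\epsilon>0$, and look for an invertible within $\epsilon$ of $a$.

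The local input is fiberwise stable rank one: for each $x$ choose an invertible in $A_x$ within $\epsilon$ of $a(x)$ and lift it, using surjectivity of $\pi_x$ and continuity of the field, to some $b^{(x)}\in A$ that is invertible and within $\epsilon$ of $a$ on a neighbourhood of $x$. By compactness, and crucially by the dimension theory of $X$ (for compact metric spaces $\dim=\mathrm{ind}$, so $X$ has a basis of open sets with zero-dimensional boundaries, see \cite{Eng}), I would extract a finite closed cover $\{F_i\}$ of multiplicity at most two whose pairwise intersections $F_i\cap F_j$ are zero-dimensional, refining these neighbourhoods and producing invertible local approximants $b^{(i)}$ on each $F_i$. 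Fixing a partition of unity $\{\phi_i\}$ subordinate to the cover, I would glue the $b^{(i)}$ into a single $b$: where only $F_i$ is active set $b=b^{(i)}$, and on a pairwise overlap interpolate between $b^{(i)}$ and $b^{(j)}$ along a path of invertibles parametrized by $\phi_j$. Multiplicity two ensures that no point is asked to interpolate among three sections, so $b$ is well defined, continuous, everywhere invertible, and within $\epsilon$ of $a$.

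To manufacture the interpolating paths over an overlap $Z=F_i\cap F_j$ I would use that $Z$ is zero-dimensional: partition $Z$ into small clopen pieces $W_s$, pick $z_s\in W_s$, and connect $b^{(i)}(z_s)$ to $b^{(j)}(z_s)$ inside the invertibles of the fiber $A_{z_s}$ by Lemma \ref{InvertiblePath}, whose hypotheses hold since $A_{z_s}$ has no $\mathrm{K}_1$-obstructions. Over a sufficiently small $W_s$ the field is norm-close to its fiber at $z_s$, so this fiber path lifts to a short path of invertibles in $A(W_s)$; since the $W_s$ are clopen and disjoint, no compatibility is needed and the pieces assemble to a continuous path of invertibles over all of $Z$. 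This is where $\dim X=1$ enters twice: once to make the overlaps zero-dimensional (so fiber paths glue freely), and once to keep the nerve a graph (so the global gluing is pairwise).

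The main obstacle is uniformity of the connecting constant across the varying fibers. Lemma \ref{InvertiblePath} produces, for each fiber, a $\delta=\delta(A_{z_s},\epsilon)$, while the interpolation over $W_s$ requires $\|b^{(i)}-b^{(j)}\|<\delta(A_{z_s},\epsilon)$ there, and a priori $\inf_x\delta(A_x,\epsilon)$ could vanish. The heart of the proof is therefore to upgrade Lemma \ref{InvertiblePath} to a statement uniform over the field, i.e.\ to show $\delta$ may be chosen independently of the fiber; this reduces to checking that its inputs, the estimate of \cite[Lemma 3.4]{Lsant} and the connectedness of the unitary groups from \cite[Theorem 2.10]{Rie}, hold uniformly in $x$, which one expects from continuity of the field and a compactness argument on $X$. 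With a uniform $\delta$ available, refining the initial cover until $\|b^{(i)}-b^{(j)}\|<\delta$ on every overlap makes all interpolations simultaneously available and closes the argument. For the special case where $X$ is a finite graph one can bypass the uniformity issue entirely, decomposing $A$ as the pullback of its restrictions to the interval edges and to the vertices and invoking the interval case together with \cite[Theorem 4.1 (i)]{bp}; the real difficulty of the theorem is exactly the passage from graphs to arbitrary one-dimensional $X$, which the uniform gluing above is designed to handle.
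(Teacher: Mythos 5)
Your strategy is genuinely different from the paper's, and in its present form it has a real gap at exactly the point you yourself call ``the heart of the proof.'' The paper's argument is two lines: it quotes \cite[Theorem 1.2]{ngsudo}, which for a continuous field $A$ over a one-dimensional compact metric space gives precisely the bound $\mathrm{sr}(A)\leq \sup_{x\in X}\mathrm{sr}(\mathrm{C}([0,1],A_x))$, and then applies Theorem \ref{StableRankOfInterval} (via Theorem \ref{StableRankOfDimOne}) to each fiber. Your gluing scheme is in effect an attempt to reprove the Ng--Sudo bound from scratch, and the crucial step is the uniformity over $x$ of the constant $\delta$ in Lemma \ref{InvertiblePath}. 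You dispose of this with ``one expects this from continuity of the field and a compactness argument on $X$,'' but that mechanism cannot work: compactness of $X$ only helps if $x\mapsto \delta(A_x,\epsilon)$ enjoys some semicontinuity, and there is none to appeal to --- nearby fibers of a continuous field need not be isomorphic, or related in any way that controls such constants, so a priori $\inf_x\delta(A_x,\epsilon)=0$. The only viable repair is to show that the constant is \emph{universal}: in Lemma \ref{InvertiblePath}, equivalently in \cite[Lemma 3.4]{Lsant}, $\delta$ can be chosen to depend on $\epsilon$ alone, the algebra entering only through the qualitative hypothesis of no $\mathrm{K}_1$-obstructions. This is true, but it is established by inspecting the proof of \cite[Lemma 3.4]{Lsant}, not by a compactness argument on the base; as written, your proof does not close.

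There are also secondary gaps in the gluing itself, each fillable but each requiring work the proposal omits. First, a finite closed cover of multiplicity two need not have zero-dimensional pairwise intersections (cover $[0,1]$ by $[0,0.6]$ and $[0.4,1]$); producing fine covers whose overlaps are zero-dimensional uses the separation/partition characterization of $\dim X\leq 1$ from \cite{Eng}, which should be invoked explicitly rather than folded into ``multiplicity at most two.'' Second, your interpolation ``parametrized by $\phi_j$'' requires a jointly continuous homotopy of invertibles on a collar neighborhood of the overlap $Z=F_i\cap F_j$, whereas your clopen-piece construction only produces a path of invertibles in $A(Z)$; one must still extend that path to a path of invertibles over a closed neighborhood of $Z$ (doable, using surjectivity of $A(V)\to A(Z)$, openness of invertibility, and compactness of the parameter interval, but missing), and partitions of unity are subordinate to open, not closed, covers, so the bookkeeping of where each $b^{(i)}$ is ``active'' needs to be redone. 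Your closing remark is also slightly off: the paper does not pass from finite graphs to general one-dimensional $X$ by gluing (that route is only used for trivial fields $\mathrm{C}(X,A)$ in Corollary \ref{CorDimOne}, via inverse limits); for genuine continuous fields the passage is delegated entirely to \cite{ngsudo}, which is exactly the packaged form of the uniform gluing you are trying to rebuild.
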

\begin{proof}
As $X$ is metrizable and one-dimensional, we can apply \cite[Theorem 1.2]{ngsudo} to obtain that $\mathrm{sr}(A)\leq \sup_{x\in X} \mathrm{sr}(\mathrm{C}([0,1],A_x))$. Now the result follows immediately from Theorem \ref{StableRankOfDimOne}.
\end{proof}

The previous result yields:

\begin{corollary}{\rm (cf. \cite[Lemma 3.3]{dadarlatelliottniu})}
Let $X$ be a one-dimensional, compact metric space, and let $A$ be a continuous field of AF algebras. Then $\mathrm{sr}(A)=1$.
\end{corollary}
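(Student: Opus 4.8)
The plan is to deduce the statement directly from Theorem \ref{StableRankOfCtsFields}, so that the only thing requiring verification is that each fiber $A_x$ has no $\mathrm{K}_1$-obstructions. Since $A$ is a continuous field of AF algebras, every fiber $A_x = A/\mathrm{C}_0(X\setminus\{x\})A$ is by hypothesis an AF algebra. Thus the corollary reduces to the purely local observation that AF algebras have no $\mathrm{K}_1$-obstructions, and then Theorem \ref{StableRankOfCtsFields} applies verbatim to give $\mathrm{sr}(A)=1$.

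To establish that an AF algebra $D$ has no $\mathrm{K}_1$-obstructions, I would assemble three classical structural facts. First, $\mathrm{sr}(D)=1$: this is standard, since $D$ is an inductive limit of finite-dimensional algebras, each of stable rank one, and stable rank one is preserved under such limits. Second, every hereditary subalgebra $B$ of $D$ is again AF. Third, $\mathrm{K}_1$ vanishes on any AF algebra, as it is trivial on finite-dimensional algebras and continuous with respect to inductive limits. Combining the second and third facts yields $\mathrm{K}_1(B)=0$ for every hereditary subalgebra $B$ of $D$, and together with $\mathrm{sr}(D)=1$ this is precisely the definition of $D$ having no $\mathrm{K}_1$-obstructions. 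As an alternative and more economical route, one can appeal to the characterization recalled at the start of this section, namely that for a real rank zero algebra (and AF algebras have real rank zero) the absence of $\mathrm{K}_1$-obstructions is equivalent to $\mathrm{K}_1=0$; since $\mathrm{K}_1(D)=0$, the conclusion is immediate.

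There is essentially no genuine obstacle in this argument, as all of its content is carried by Theorem \ref{StableRankOfCtsFields}; the corollary is a clean specialization that recovers \cite[Lemma 3.3]{dadarlatelliottniu}. The one point that merits care is that the definition of no $\mathrm{K}_1$-obstructions demands trivial $\mathrm{K}_1$ for \emph{all} hereditary subalgebras of the fiber, not merely for the fiber itself; this could in principle be the subtle step, but it causes no difficulty here precisely because the class of AF algebras is closed under passing to hereditary subalgebras. Having checked the fiberwise hypothesis, I would simply invoke Theorem \ref{StableRankOfCtsFields} to conclude.
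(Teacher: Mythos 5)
Your proposal is correct and is exactly the paper's (implicit) argument: the paper derives this corollary directly from Theorem \ref{StableRankOfCtsFields}, the only content being that AF algebras have no $\mathrm{K}_1$-obstructions, which you verify carefully (stable rank one, hereditary subalgebras of AF algebras are AF, and $\mathrm{K}_1$ vanishes on AF algebras, or alternatively via Lin's real rank zero characterization recalled at the start of Section \ref{sr}). Nothing is missing.
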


\begin{corollary}
Let $X$ be a one-dimensional, compact metric space, and let $A$ be a continuous field of simple AI algebras. Then $\mathrm{sr}(A)=1$.
\end{corollary}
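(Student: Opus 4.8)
The plan is simply to verify the hypothesis of Theorem \ref{StableRankOfCtsFields}, namely that each fiber $A_x$ has no $\mathrm{K}_1$-obstructions, and then to invoke that theorem. By hypothesis every fiber $A_x$ is a simple AI algebra, so I would first reduce the task to checking two numerical conditions: since $A_x$ is simple, the equivalence recalled above (for simple algebras, no $\mathrm{K}_1$-obstructions is equivalent to $\mathrm{K}_1=0$) shows that $A_x$ has no $\mathrm{K}_1$-obstructions precisely when $\mathrm{sr}(A_x)=1$ and $\mathrm{K}_1(A_x)=0$.

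For the stable rank condition, I would use that an AI algebra is by definition an inductive limit of finite direct sums of interval building blocks $C([0,1],M_n)$. Each such block has stable rank one (as $\dim[0,1]=1$ gives $\mathrm{sr}(C([0,1],M_n))=1$), a finite direct sum of stable rank one algebras again has stable rank one, and by Rieffel's estimate $\mathrm{sr}(\lim_{i\to\infty}A_i)\leq\liminf\mathrm{sr}(A_i)$ the limit has stable rank at most, hence exactly, one.

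For the vanishing of $\mathrm{K}_1$, I would exploit continuity of the functor $\mathrm{K}_1$: we have $\mathrm{K}_1(A_x)=\varinjlim \mathrm{K}_1\bigl(\bigoplus_j C([0,1],M_{n_j})\bigr)$, and each summand satisfies $\mathrm{K}_1(C([0,1],M_n))\cong\mathrm{K}_1(C([0,1]))=0$ because the interval is contractible. Therefore $\mathrm{K}_1(A_x)=0$. Combining the two computations, each fiber has no $\mathrm{K}_1$-obstructions, and Theorem \ref{StableRankOfCtsFields} yields $\mathrm{sr}(A)=1$.

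I do not expect a genuine obstacle here; the content is entirely in recognising that the simplicity of the fibers lets one replace the hereditary-subalgebra condition in the definition of no $\mathrm{K}_1$-obstructions by the single requirement $\mathrm{K}_1(A_x)=0$, which is automatic for AI algebras. The only mild point to check carefully is that the definition of a continuous field of simple AI algebras indeed forces every fiber to be a (simple) AI algebra, so that the preceding fiberwise computations apply.
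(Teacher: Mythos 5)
Your proposal is correct and matches the paper's (implicit) argument: the paper states this corollary without proof as an immediate consequence of Theorem \ref{StableRankOfCtsFields}, the point being exactly that simplicity reduces the no-$\mathrm{K}_1$-obstructions condition to $\mathrm{sr}(A_x)=1$ and $\mathrm{K}_1(A_x)=0$, both of which hold for AI algebras by the inductive-limit computations you give. Your closing remark is also the right one to flag, since for non-simple AI algebras (e.g.\ $C([0,1])$ itself) hereditary subalgebras such as $C_0((0,1))$ can have nonzero $\mathrm{K}_1$, so simplicity is genuinely needed.
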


If $A$ is a locally trivial field of C$^*$-algebras with base space the unit interval then it is clear by the methods above that $\mathrm{sr}(A)=1$ implies that $\mathrm{K}_1(A_x)$ must be trivial for all $x$. For general continuous fields, this implication is false.

\begin{proposition}\label{lem:counterexample}
 Let $B\subset C$ be C$^*$-algebras with stable rank one such that $C$ has no $\mathrm{K}_1$ obstructions. Let 
\[A=\{f\in\mathrm{C}([0,1],C)\mid f(0)\in B\}\,.\] Then $A$ is a continuous field over $[0,1]$ with stable rank one.
\end{proposition}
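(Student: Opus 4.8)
The plan is to verify the continuous-field assertion by direct inspection and then to deduce stable rank one by realizing $A$ as a pullback, thereby reducing to the pullback machinery already exploited in Corollary~\ref{CorDimOne} and the dimension-drop corollary, once Theorem~\ref{StableRankOfInterval} is used to upgrade the relevant corner to stable rank one.

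First I would endow $A$ with a $\mathrm{C}([0,1])$-algebra structure via pointwise multiplication by scalar functions, $(g\cdot f)(x)=g(x)f(x)$; this preserves $A$ since $B$ is closed under scalar multiples, and it is visibly central. The fibre computation is then immediate: for $x\in(0,1]$ one has $C_0([0,1]\setminus\{x\})A=\{f\in A:f(x)=0\}$ with evaluation inducing $A_x\cong C$, while at the endpoint $A_0\cong B$. Because every element of $A$ is a genuinely norm-continuous $C$-valued function and $B\hookrightarrow C$ is isometric, the map $x\mapsto\|\pi_x(f)\|=\|f(x)\|$ is continuous, so $A$ is a continuous field over $[0,1]$.

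For the stable rank I would present $A$ as the pullback
\[
\xymatrix{A\ar@{-->}[r]\ar@{-->}[d] & B \ar[d]^{\iota} \\ C([0,1],C)\ar@{->>}[r]^{\lambda_0} & C}
\]
where $\lambda_0(f)=f(0)$ is surjective and $\iota$ is the inclusion, the identification being that $\{f:f(0)\in B\}$ is precisely the set of functions whose value at $0$ lifts along $\iota$. All three corners apart from $A$ have stable rank one: $\mathrm{sr}(B)=\mathrm{sr}(C)=1$ by hypothesis, and $\mathrm{sr}(C([0,1],C))=1$ by Theorem~\ref{StableRankOfInterval}, exactly because $C$ has stable rank one and no $\mathrm{K}_1$-obstructions. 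Since $\lambda_0$ is surjective and the base corner $C$ has no $\mathrm{K}_1$-obstructions, an application of \cite[Theorem 4.1 (i)]{bp} gives $\mathrm{sr}(A)=1$.

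The step I expect to be the main obstacle is matching the hypotheses of \cite[Theorem 4.1 (i)]{bp} exactly. The inclusion $\iota$ is not surjective, but this is harmless because only one of the two structure maps of the pullback need be surjective, and $\lambda_0$ is; the substantive point is to confirm that the connectivity requirement of that theorem (connectedness of the unitary groups of hereditary subalgebras, as recorded via \cite[Theorem 2.10]{Rie}) is imposed on the base corner $C$ and is guaranteed by the standing assumption that $C$ has no $\mathrm{K}_1$-obstructions, just as in the earlier uses of this pullback result in the section. It is precisely here that the asymmetry between $B$ and $C$ is used: $B$ is only asked to have stable rank one, which is what makes the statement a genuine counterexample to the converse of Theorem~\ref{StableRankOfCtsFields}.
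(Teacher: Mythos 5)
Your proposal is correct and follows essentially the same route as the paper: the same pullback of $\mathrm{C}([0,1],C)\xrightarrow{\mathrm{ev}_0} C\hookleftarrow B$, with $\mathrm{sr}(\mathrm{C}([0,1],C))=1$ supplied by Theorem~\ref{StableRankOfInterval} and the conclusion drawn from \cite[Theorem 4.1 (i)]{bp}. Your extra care about the connectivity hypothesis of that theorem is welcome (the paper glosses over it), though what is actually needed there is only that the quotient corner $C$ has connected stable rank one, which follows from $\mathrm{sr}(C)=1$ and $\mathrm{K}_1(C)=0$; the full hereditary-subalgebra condition invoked via \cite[Theorem 2.10]{Rie} is not required at that step.
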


 \begin{proof}
It is clear that $A$ is a $C([0,1])$-algebra, which is moreover a continuous field.

Observe that $A$ can be obtained as the pullback of the diagram
\[\xymatrix{
A\ar@{-->}[r]\ar@{-->}[d] &  B\ar@{^{(}->}[d]^i \\ \mathrm{C}([0,1],C)\ar@{->>}[r]^>>>>>>{\mathrm{ev}_0} & C}
\]where ev$_0$ is the map given by evaluation at 0. Since the rows are surjective, we have by \cite[Theorem 4.1]{bp} that $\mathrm{sr}(A)\leq \text{max}\{\mathrm{sr}(B),\mathrm{sr}(\mathrm{C}([0,1],C))\}$.
As $C$ has no $\mathrm{K}_1$ obstructions and $\mathrm{sr}(B)=1$, we have $\mathrm{sr}(A)=1$ by Theorem \ref{StableRankOfInterval}. 
 \end{proof}

\begin{proposition}
There exists a (nowhere trivial) continuous field $A$ over $[0,1]$ such that $\mathrm{sr}(A)=1$ and $\mathrm{K}_1(A_x)\neq 0$ for a dense subset of $[0,1]$.
\end{proposition}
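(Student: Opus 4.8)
The plan is to realise $A$ as a field over $[0,1]$ whose generic fibre is a fixed algebra with no $\mathrm{K}_1$-obstructions, but which is forced, at a prescribed dense sequence of points, into subalgebras with non-trivial $\mathrm{K}_1$. First I fix a unital UHF algebra $C$ together with a \emph{telescoping} presentation $C=\overline{\bigcup_n B_n}$ as the closure of an increasing union of unital subalgebras $B_n\cong M_{k_n}(C(\mathbb{T}))$; one can arrange this by writing $C$ as an inductive limit of circle algebras with injective, unital connecting maps that vanish on $\mathrm{K}_1$ (so that $\mathrm{K}_1(C)=0$ while $\mathrm{K}_1(B_n)=\mathbb{Z}\neq0$), and each such $B_n$ has $\mathrm{sr}(B_n)=1$. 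Choosing a dense sequence $(p_n)_{n\ge1}$ of distinct points of $(0,1)$, I set
\[ A:=\{\, f\in C([0,1],C)\ :\ f(p_n)\in B_n \text{ for all } n\,\}. \]
This is a closed unital subalgebra of $C([0,1],C)$, hence a $C([0,1])$-algebra, and since $x\mapsto\|f(x)\|$ is continuous for every $f$ it is a continuous field.

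Next I would identify the fibres. Since each inclusion $B_n\subseteq C$ is proper, density of $(p_n)$ immediately gives that no restriction of $A$ to a nonempty open set has constant fibre, so $A$ is nowhere trivial. The identifications $A_{p_n}=B_n$ and $A_x=C$ for $x\notin\{p_n\}$ are exactly where the telescoping $B_n\nearrow C$ is essential: to realise a prescribed value at a point one must produce a continuous section taking values in $B_m$ at each nearby $p_m$, and since the $p_m$ accumulating at any point have $m\to\infty$, the constraints there involve $B_m$ with $\mathrm{dist}(\,\cdot\,,B_m)\to0$. A continuous selection then exists (the telescoping guaranteeing the required lower semicontinuity of the convex-valued constraint multifunction), which yields surjectivity of the relevant evaluation maps and hence the stated fibres. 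In particular $\mathrm{K}_1(A_{p_n})=\mathbb{Z}\neq0$ on the dense set $\{p_n\}$.

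The main obstacle is proving $\mathrm{sr}(A)=1$. It cannot be read off from an inductive-limit estimate: adding constraints shrinks the algebra, so the finite-stage algebras $A_{[N]}=\{f:f(p_i)\in B_i,\ i\le N\}$ \emph{decrease} and $A=\bigcap_N A_{[N]}$, which is the wrong direction for $\mathrm{sr}(\varinjlim)\le\liminf\mathrm{sr}$. Nor do the fibre-wise results of the paper help, since the fibres $B_n$ carry $\mathrm{K}_1$-obstructions, so $\mathrm{sr}(C([0,1],B_n))=2$. I would therefore prove density of invertibles in $A$ directly, using the telescoping to reduce to finitely many non-negligible constraints. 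Given $a\in A$ and $\epsilon>0$, compactness of $a([0,1])$ together with $C=\overline{\bigcup B_n}$ produces an $N$ with $\mathrm{dist}(a(x),B_N)<\epsilon$ for all $x$, so at scale $\epsilon$ only the finitely many strong constraints at $p_1,\dots,p_N$ are seen; these are handled exactly as in the interior-point, iterated version of Proposition \ref{lem:counterexample} (using $\mathrm{sr}(C([0,1],C))=1$ from Theorem \ref{StableRankOfInterval}) to obtain an invertible approximant $b$ with $b(p_i)\in B_i^{-1}$ for $i\le N$. The delicate point is then to further correct $b$ at the densely many mild defects $p_n$ ($n>N$) so that $b(p_n)\in B_n^{-1}$ while keeping $b$ pointwise invertible and within $\epsilon$ of $a$.

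This last correction is the heart of the argument, and I expect it to be the hard part, precisely because invertibility is not a convex condition (so a selection theorem does not apply directly) and a naive small additive perturbation fails near the points where $a$ itself is not invertible, which the dense defects approach. I would carry it out not by convex combinations but through \emph{paths of invertibles}, in the spirit of Lemma \ref{InvertiblePath}: since each $B_n$ has stable rank one and connected invertible group, and since $B_n^{-1}\subseteq B_m^{-1}$ for $n\le m$, one can choose $\beta_n\in B_n^{-1}$ close to $b(p_n)$ and connect $b(p_n)$ to $\beta_n$ by a short path of invertibles of $C$, then splice these paths into $b$ by a successive-approximation scheme that processes the constraints in order of index, each step a local invertible modification supported near $p_n$ of size controlled (by the telescoping) by $\epsilon/2^n$. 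Proving that this scheme converges to a genuine element of $A\cap A^{-1}$—that is, reconciling the densely many constraints with continuity and invertibility of the limit—is the main technical difficulty. Once it is in place, $\mathrm{sr}(A)=1$ follows, completing the construction of a nowhere-trivial continuous field over $[0,1]$ of stable rank one whose fibres have non-trivial $\mathrm{K}_1$ on a dense set.
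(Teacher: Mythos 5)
Your construction --- dense constraints $f(p_n)\in B_n$ inside $C([0,1],C)$, with $C$ UHF and $(B_n)$ an increasing telescope of circle algebras --- is fine as far as the soft statements go: the fibre identifications $A_{p_n}=B_n$ and $A_x=C$ can indeed be proved by explicit interpolation (on shrinking annuli around a point, take values in $B_N$ with $N$ growing as one approaches the point, using that constraint points of index $\leq N$ stay a definite distance away), and nowhere-triviality and the dense non-vanishing of $K_1$ follow. The genuine gap is exactly where you place it: $\mathrm{sr}(A)=1$ is never proved, and the scheme you sketch would fail as described. First, a factual error: the invertible group of $B_n\cong M_{k_n}(C(\mathbb T))$ is \emph{not} connected --- since $\mathrm{sr}(B_n)=1$, its $\pi_0$ is $K_1(B_n)=\mathbb Z$, which is the whole point of your construction; so the parenthetical justification of your splicing step is false. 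Second, the claimed size control $\epsilon/2^n$ ``by the telescoping'' is not available: after the finite-stage approximation by an invertible $b$ with $\|a-b\|<\epsilon$, the defect $\mathrm{dist}(b(p_n),B_n)$ is of order $\epsilon$ for \emph{every} $n>N$ (it comes from $\|a-b\|$, since $a(p_n)\in B_n$ exactly), so each of the infinitely many corrections has sup-norm of order $\epsilon$. Third, each splice produced by Lemma \ref{InvertiblePath} is a path of invertibles of $C$, whose values have no reason to be close to $B_m$ for the infinitely many constraint points $p_m$ inside the splicing window; hence a correction at $p_n$ degrades the defect at nearby $p_m$ back up to order $\epsilon$, whereas Lemma \ref{InvertiblePath} can only be re-applied once the defect is below its own $\delta(\epsilon)\ll\epsilon$ --- the iteration does not converge. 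Repairing this by forcing the paths into the subalgebras would require a \emph{relative} version of Lemma \ref{InvertiblePath} for the inclusions $B_n\subseteq B_m$, and that does not follow from $K_1(B_n)\to K_1(C)=0$: the proof via \cite[Lemma 3.4]{Lsant} needs vanishing of $K_1$ of hereditary subalgebras, and $K_1$-classes of hereditary subalgebras of $B_n$ need not die in the hereditary subalgebras of $B_m$ they generate. Finally, no soft exhaustion argument can rescue this: the natural increasing union $A=\overline{\bigcup_M E_M}$ with $E_M=\{f\in C([0,1],B_M):f(p_n)\in B_n,\ n<M\}$ consists of algebras of stable rank \emph{two}, since $E_M$ surjects onto $C([c,d],B_M)$ for any interval avoiding $p_1,\dots,p_{M-1}$ and Theorem \ref{StableRankOfInterval} applies to $B_M$.

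The paper avoids the infinite-correction problem altogether by a different device, which is the idea missing from your proposal: rather than intersecting the dense constraints inside one copy of $C([0,1],C)$, it puts each constraint in its own tensor factor over $C[0,1]$. With $C$ an AF algebra (functions on the Cantor set) and $B=C(\mathbb T)\hookrightarrow C$, each $C_n=\{f\in C([0,1],C):f(x_n)\in B\}$ has stable rank one by the finite pullback argument (your finite-stage step), and $A=\varinjlim A_n$ where $A_n=C_1\otimes_{C[0,1]}\cdots\otimes_{C[0,1]}C_n$ with unital connecting maps $a\mapsto a\otimes 1$. Because this is an inductive limit rather than an intersection, $\mathrm{sr}(A)=1$ follows for free from $\mathrm{sr}(A_n)=1$, while the singular fibres $A(x_k)\cong\varinjlim C^{\otimes k-1}\otimes B\otimes C^{\otimes n-k}$ keep non-trivial $K_1$ by the K\"unneth formula. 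You correctly diagnosed that intersecting constraints runs the wrong way for inductive-limit estimates; the $C[0,1]$-tensor product is precisely the trick that converts ``densely many constraints'' into an increasing union. As it stands, your proposal establishes everything except the one assertion that carries the content of the proposition.
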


\begin{proof}
 Let $C=\mathrm{C}(X)$, and $B=\mathrm{C}(\mathbb T)$ where $X$ denotes the cantor set and $\mathbb T$ the unit circle. There exists a surjective map $\pi\colon X \to \mathbb T$ and hence there is an embedding 
$i\colon B \to C$.  Choose a dense sequence $\{x_n\}_n\subset [0,1]$ and define 
\[ C_n:=\{f\in \mathrm{C}([0,1],C)\mid f(x_n)\in i(B)\}\,.\]
Since $X$ is zero dimensional, $C$ is an AF-algebra and hence has no $\mathrm{K}_1$ obstructions. Therefore $C_n$ is a continuous field over $[0,1]$ of stable rank one by Proposition \ref{lem:counterexample}.
Note that $C_n(x_n)\cong B$ which has non trivial $\mathrm{K}_1$. We now proceed as in the proof of \cite[Corollary 8.3]{DadarlatElliott} to obtain a dense subset of such singularities. 

Let $A_1=C_1$, $A_{n+1}=A_n\otimes_{\mathrm{C}[0,1]} C_{n+1}$ and $A=\varinjlim (A_n,\theta_n)$ where $\theta_n(a)=a\otimes 1$ (see \cite{blanchard}). Note that $A_n$ can be described as 
\[A_n=\{f\in \mathrm{C}([0,1],C^{\otimes n})\mid f(x_i)\in C^{\otimes i-1}\otimes i(B) \otimes C^{\otimes n-i}, i=1,\dots, n\}\,,\]
and now $\theta_n(f)(x)=f(x)\otimes 1$. Hence $A_n$ is clearly a continuous field which can moreover be described by the following pullback diagram 
\[\xymatrix{
 A_n\ar@{-->}[r]\ar@{-->}[d] &  B\otimes C^{\otimes n-1}\oplus \cdots \oplus C^{\otimes n-1}\otimes B \ar@{^{(}->}[d] \\ \mathrm{C}([0,1],C^{\otimes n})\ar@{->>}[r]^>>>>>>>>>{\mathrm{ev}_{x_1,\dots,x_n}} & C^{\otimes n}\oplus \stackrel{n}{\dots} \oplus C^{\otimes n}}
 \]
Again, since $C^{\otimes n}$ is an AF algebra it has no $K_1$ obstructions. Then, a similar argument as that in the proof of Proposition \ref{lem:counterexample} applies to conclude that $A_n$ has stable rank one. Moreover, $A$ has stable rank one since it is an inductive limit of stable rank one algebras $A_n$ (and is moreover commutative).

Now, for any $x\in [0,1]$, the fiber $A(x)$ can be computed as $\varinjlim A_n(x)$. Hence, if $x\not\in\{x_n\}_n$, $A(x)\cong\varinjlim C^{\otimes n}\cong \varinjlim \mathrm{C}(X^n)\cong \mathrm{C}(\varprojlim X^n)$. Since $\varprojlim X^n$ is also zero dimensional,
$A(x)$ is an AF-algebra and thus has trivial $K_1$. 

Assume $x=x_k\in \{x_n\}_n$. Now for any $n\geq k$, 
\[
A_n(x_k)\cong C^{\otimes k-1}\otimes B \otimes C^{\otimes n-k}\cong \mathrm{C}(X^{k-1}\times \mathbb T \times X^{n-k})\,.
\] 
An application of the Künneth formula shows that $K_1(A_n(x_k))\cong \mathrm{C}(X^{n-1},\mathbb Z)$ and thus $K_1(A(x_k))\cong \varinjlim \mathrm{C}(X^{n-1},\mathbb Z)\cong \mathrm{C}(\prod_{i=1}^\infty X,\mathbb Z)\neq 0$.
\end{proof}

\section{Hereditariness}\label{SectionHer}

In this short section we study the hereditary character of certain continuous fields. This will be used in the sequel as, since mentioned earlier, in this setting the classical and the stabilized Cuntz semigroup carry the same information. We start by recalling the definitions.

Let $A$ be a C$^\ast$-algebra, and $a,b\in A_+$. We say that $a$ is \emph{Cuntz-subequivalent} to $b$, in symbols $a\preceq b$, if there is a sequence $(v_n)$ in $A$
such that $a=\lim_n v_nbv^*_n$. We say that $a$ is \emph{Cuntz-equivalent} to $b$, and we write $a\sim b$, if both conditions $a\preceq b$ and $b\preceq a$ are satisfied. Upon extending this relation
to $M_\infty (A)_+$, one obtains an ordered set $\mathrm{W}(A)=M_\infty (A)_+ /\!\!\sim$. We denote the equivalence class of $a\in M_{\infty}(A)_+$ by $\la a\ra$, and then the above set becomes a partially ordered abelian semigroup when it is equipped with the operation  $\la a\ra +\la b\ra=\la \left(\begin{smallmatrix} a&0\\ 0&b \end{smallmatrix}\right)\ra=\la a\oplus b\ra$, and order given by $\la a\ra\leq \la b\ra$ if $a\preceq b$. The semigroup $\mathrm{W}(A)$ is referred to as the \emph{Cuntz semigroup}.

In \cite{CEI}, Coward, Elliott and Ivanescu introduced a category of partially ordered semigroups $\Cu$, to which the 
Cuntz semigroup of a stable C$^\ast$-algebra belongs. Furthermore, they proved that 
$\Cu(A):=\mathrm{W}(A\otimes \mathcal K)$ defines a sequentially continuous functor from the category of C$^*$-algebras to $\Cu$. The semigroup $\Cu(A)$ is sometimes called the \emph{stabilized Cuntz semigroup} in order to distinguish it from $\mathrm{W}(A)$. 

Semigroups in the category $\Cu$ have a rich ordered structure not always present in $\mathrm{W}(A)$. Hence, and in order to fit $\mathrm{W}(A)$ into this categorical description, a new category called $\PreCu$ was introduced in \cite{ABP}, where $\mathrm{W}(A)$ belongs in a number of instances. It is shown in \cite[Proposition 4.1]{ABP} that there is a functor from 
$\PreCu$ to $\Cu$ which is left-adjoint to the identity. This functor is basically a completion of semigroups
and, for a wide class of C$^*$-algebras, it sends $\mathrm{W}(A)$ to $\Cu(A)$. We recall some of the main facts below.

Recall that, for a partially ordered semigroup $M$ and elements $a$, $b\in M$, we say that $a$ is \emph{compactly contained} in $b$, in symbols $a\ll b$,  if for any increasing
 sequence $(b_n)$ in $M$ such that $\sup(b_n)$ exists and
$b\leq \sup(b_n)$ there exists $n_0$ such that $a\leq b_{n_0}$. When an increasing sequence $(b_n)$ satifies that $b_n\ll b_{n+1}$, then we say that $(b_n)$ is a \emph{rapidly increasing} sequence.

\begin{definition}[\cite{CEI, ABP}]
 Let $\PreCu$ be the category whose objects are those partially ordered abelian semigroups $M$ satisfying the following properties:
\begin{enumerate}[\rm(i)]
 \item Every element in $M$ is the supremum of a rapidly increasing sequence.
 \item The relation $\ll$ and suprema are compatible with addition.
\end{enumerate}
Maps of PreCu are semigroup maps preserving suprema of increasing sequences (when they exist), and the relation $\ll$.

In this light, $\Cu$ may be defined as the full subcategory of $\PreCu$ whose objects are those partially ordered abelian semigroups (in $\PreCu$) for which every increasing
sequence has a supremum.
\end{definition}

 Given a semigroup $M$ in $\PreCu$, we say that a pair $(N,\iota)$ is a \emph{completion} of $M$ if
\begin{enumerate}[\rm(i)]
 \item $N$ is an object of $\Cu$,
\item $\iota\colon M\to N$ is an order-embedding in $\PreCu$, and
\item for any $x\in N$, there is a rapidly increasing sequence $(x_n)$ in $M$ such that $x=\sup \iota(x_n)$.
\end{enumerate}

It was shown in \cite[Theorem 5.1]{ABP} that, for $M\in\PreCu$, there exists a (unique) object $\overline{M}$ in $\Cu$ and an order-embedding $\iota\colon M\to \overline{M}$ in
$\PreCu$, satisfying that $(\overline{M},\iota)$ is the completion of $M$. 

If  $M$ and $N$ are partially ordered semigroups, an order-embedding $\iota\colon M\to N$ is called \emph{hereditary} if, whenever $x\in N$ and $y\in \iota(M)$
satisfy $x\leq y$, then $x\in \iota(M)$. If the order-embedding $\iota\colon \mathrm{W}(A)\to \mathrm{W}(A\otimes\mathcal K)=\Cu(A)$ is hereditary, then we will say that $\mathrm{W}(A)$ is hereditary. In this case, $\mathrm{W}(A)\in\PreCu$ and its completion is $(\mathrm{W}(A\otimes\mathcal K),\iota)$ (see \cite[Theorem 6.1]{ABP}). There are no examples known of C$^*$-algebras $A$ for which $\mathrm{W}(A)$ is not hereditary.

Recall (\cite{BRTTW}) that if $A$ is a unital C$^*$-algebra, the \emph{radius of comparison} of $(\Cu(A),[1_A])$, denoted by $r_A$, is defined as the infimum of $r\geq 0$ satisfying that if $x,y\in \Cu(A)$ are such that $(n+1)x+m[1_A]\leq ny$ for some $n,m$ with $\frac{m}{n}>r$, then $x\leq y$. In general, $r_A\leq \mathrm{rc}(A)$, where $\mathrm{rc}(A)$ is the radius of comparison of the algebra (see, e.g. \cite{toms:plms}) and equality holds if $A$ is residually stably finite (i.e. all quotients of $A$ are stably finite) (see Proposition 3.2.3 in \cite{BRTTW}). It is known that, if $A$ has stable rank one or finite radius of comparison, then $\mathrm{W}(A)$ is hereditary (\cite{ABP}, \cite{BRTTW}). In particular, this holds if $A$ is a continuous field over a one dimensional, compact metric space such that each fiber has no $\mathrm{K}_1$-obstructions by Theorem \ref{StableRankOfCtsFields}.

The following is probably well known. We include a proof for completeness.
\begin{lemma}
\label{lem:stfinite}
Let $X$ be a compact Hausdorff space, and let $A$ be a $\mathrm{C}(X)$-algebra such that $A_x$ has stable rank one for all $x$. Then $A$ is residually stably finite.
\end{lemma}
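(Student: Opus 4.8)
The plan is to show that every quotient of $A$ is stably finite. Recall that a unital $C^*$-algebra is stably finite precisely when it admits no infinite projections in any matrix amplification, equivalently (in the presence of enough traces, or directly via comparison theory) when the Cuntz semigroup of the algebra has no element that is both nonzero and properly infinite. The cleanest route, however, avoids quotient-by-quotient bookkeeping and instead reduces everything to the fibers. The key structural fact I would exploit is that for a $\mathrm{C}(X)$-algebra $A$ and any closed $Y\subseteq X$, the quotient $A(Y)$ is again a $\mathrm{C}(Y)$-algebra whose fibers are exactly the fibers $A_x$ for $x\in Y$. Thus it suffices to prove that any $\mathrm{C}(X)$-algebra all of whose fibers have stable rank one (hence are stably finite) is itself stably finite; applying this to each quotient $A(Y)$, and more generally to arbitrary quotients, then gives residual stable finiteness.

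The heart of the argument is the following reduction. Let $a, b \in M_n(A)$ with $a^*a = 1$ and $aa^* = b$, so that $aa^* \leq 1$; I want to conclude $aa^* = 1$, i.e. that isometries in any matrix amplification of $A^{\sim}$ are unitaries. For each point $x\in X$ consider the image $\pi_x(a) \in M_n(A_x)$. The evaluation maps are $*$-homomorphisms, so $\pi_x(a)$ is again an isometry in $M_n(A_x^{\sim})$. Since $A_x$ has stable rank one, $A_x$ is stably finite, and so every isometry in $M_n(A_x^{\sim})$ is a unitary; hence $\pi_x(a)\pi_x(a)^* = 1$ in $M_n(A_x)$ for every $x$. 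The final step is to promote this fiberwise unitarity back to $A$: the element $1 - aa^*$ is a positive element of $M_n(A)$ whose image under every $\pi_x$ vanishes. I would then invoke the fact that for a $\mathrm{C}(X)$-algebra the function $x \mapsto \|c(x)\|$ separates points in the sense that $c = 0$ whenever $\pi_x(c) = 0$ for all $x$ (this is part of the definition of the $\mathrm{C}(X)$-algebra structure, since $\bigcap_x \mathrm{C}_0(X\setminus\{x\})A = 0$). Therefore $1 - aa^* = 0$, and $a$ is a unitary, proving $A$ is (stably) finite.

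I expect the main obstacle to be handling the unitization and the matrix amplification cleanly, since $A$ need not be unital while stable finiteness is phrased in terms of unitizations. The point to be careful about is that $M_n(A)$ inside $M_n(A^{\sim})$ is an ideal, that $\pi_x$ extends to the unitizations, and that the fiber of $A^{\sim}$ at $x$ is $A_x^{\sim}$ (or relates to it in a controlled way); one must check that an isometry in $M_n(A^{\sim})$ projects to an isometry in $M_n(A_x^{\sim})$ and that the scalar-correction terms behave. A secondary subtlety is making sure the separation property $\bigcap_x \mathrm{C}_0(X\setminus\{x\})A = \{0\}$ is available; for a compact Hausdorff space this is standard for $\mathrm{C}(X)$-algebras and is exactly what makes the norm function $x\mapsto\|a(x)\|$ faithful. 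Finally, to get the full \emph{residual} statement I would note that an arbitrary quotient of a $\mathrm{C}(X)$-algebra whose fibers have stable rank one again has fibers that are quotients of stably finite algebras by a stably finite argument, or more directly observe that it suffices to check stable finiteness of $A$ itself and of each $A(Y)$ together with the fact that fibers of quotients remain among the original fibers; combining these yields that no quotient contains an infinite isometry, as required.
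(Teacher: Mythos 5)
Your central fiberwise argument is sound, and it is exactly the step the paper leaves implicit in its phrase ``this clearly implies'': an isometry $a$ in $M_n$ of the unitization becomes a unitary in every fiber (stable rank one implies stable finiteness there), the defect $1-aa^*$ lies in the matrix algebra over the $\mathrm{C}(X)$-algebra itself and is killed by every evaluation $\pi_x$, and $\bigcap_x \ker\pi_x=0$ for a $\mathrm{C}(X)$-algebra over a compact Hausdorff space, so the defect vanishes. The gap is in how you pass from this to the \emph{residual} statement. An arbitrary ideal $I$ of $A$ need not be of the form $\mathrm{C}_0(X\setminus Y)A$, so arbitrary quotients are not of the form $A(Y)$; and your claim that ``fibers of quotients remain among the original fibers'' is false: the fibers of $A/I$ are $(A/I)_x\cong A/(\mathrm{C}_0(X\setminus\{x\})A+I)$, which are in general \emph{proper quotients} of the fibers $A_x$. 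Your alternative route --- that these fibers are ``quotients of stably finite algebras'' and hence fine --- fails outright, because stable finiteness does not pass to quotients: the cone $\mathrm{C}_0((0,1],\mathcal{O}_2)$ is quasidiagonal, hence stably finite, yet it surjects onto $\mathcal{O}_2$, which is infinite.

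This is precisely why the lemma's hypothesis is stable rank one of the fibers rather than mere stable finiteness, and it is how the paper argues: $A/I$ is again a $\mathrm{C}(X)$-algebra, its fibers are quotients of the $A_x$, and stable rank one (unlike stable finiteness) \emph{does} pass to quotients; hence the fibers of $A/I$ have stable rank one, in particular are stably finite, and the fiberwise argument you gave, applied now to the $\mathrm{C}(X)$-algebra $A/I$, finishes the proof. So the repair is one line, but as written your reduction of arbitrary quotients to the fiberwise claim is broken, and the broken step is the only place where the stable rank one hypothesis (as opposed to stable finiteness of the fibers) is actually needed.
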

\begin{proof}
Let $I$ be an ideal of $A$, which is also a $\mathrm{C}(X)$-algebra, as well as is the quotient $A/I$, with fibers $(A/I)_x\cong A/(\mathrm{C}_0(X\setminus\{x\})A+I)$. As these are quotients of $A_x$, they have stable rank one, so in particular they are stably finite, and this clearly implies $A/I$ is stably finite.
\end{proof}

\begin{proposition}
\label{prop:ctsfieldhered} Let $X$ be a finite dimensional compact Hausdorff space, and let $A$ be a continuous field over $X$ whose fibers are simple, finite, and $\mathcal Z$-stable. Then $\mathrm{W}(A)$ is hereditary. 
\end{proposition}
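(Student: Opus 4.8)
The strategy is to apply the criterion recalled just before Lemma~\ref{lem:stfinite}: a C$^*$-algebra with finite radius of comparison has hereditary Cuntz semigroup. Since $\dim(X)$ may exceed one, the stable-rank-one route is unavailable here, so the whole argument hinges on producing a finite bound for the radius of comparison of $A$. I would therefore split the proof into two assertions: (a) $A$ is residually stably finite, so that $r_A=\mathrm{rc}(A)$; and (b) $\mathrm{rc}(A)<\infty$. Granting both, one has $r_A=\mathrm{rc}(A)<\infty$, and the cited result (\cite{ABP},\cite{BRTTW}) gives at once that $\mathrm{W}(A)$ is hereditary. (We may assume $A$ unital, so that $r_A$ is defined as in the excerpt; the general case reduces to this by unitizing.)

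The first step is fiber analysis, which yields (a) and supplies the input for (b). Each fiber $A_x$ is simple, finite and $\mathcal Z$-stable. By R{\o}rdam's dichotomy, a simple $\mathcal Z$-stable algebra is either purely infinite or of stable rank one, and finiteness excludes the former; hence $\mathrm{sr}(A_x)=1$ for every $x\in X$. Lemma~\ref{lem:stfinite} then applies directly and shows that $A$ is residually stably finite, and by Proposition~3.2.3 of \cite{BRTTW} this gives $r_A=\mathrm{rc}(A)$, which is (a). The same $\mathcal Z$-stability furnishes what is needed for (b): every $\mathcal Z$-stable algebra has strict comparison of positive elements, so $\mathrm{rc}(A_x)=0$ for all $x$.

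It remains to promote this fiberwise vanishing of the radius of comparison to a finite global bound, and this is the step I expect to be the crux. The plan is a dimension-reduction argument exploiting $\dim(X)<\infty$: choose a finite closed cover of $X$ adapted to its covering dimension, verify the relevant comparison inequalities locally over each piece, where upper/lower semicontinuity of the norm functions lets one approximate a section by the strict-comparison behaviour of nearby fibers, and then patch the local comparisons together, the combinatorics of the patching being controlled by the order of the cover and hence by $\dim(X)$. This should yield an estimate of the shape $\mathrm{rc}(A)\le c\,\dim(X)$ for a universal constant $c$, and in particular $\mathrm{rc}(A)<\infty$.

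The genuine obstacle is precisely this gluing in the last paragraph: fiberwise strict comparison does not transfer to $A$ for free, and the penalty incurred in passing from local to global comparison must be absorbed by the finite covering dimension of the base space. Once the dimension bound is established, combining (a) and (b) and invoking the finite-radius-of-comparison criterion is immediate.
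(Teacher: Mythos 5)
Your step (a) coincides with the paper's: the fibers have stable rank one by R{\o}rdam's dichotomy, Lemma \ref{lem:stfinite} gives residual stable finiteness, and hence $r_A=\mathrm{rc}(A)$. But your step (b) is a genuine gap, not a proof. You correctly identify the local-to-global passage as ``the crux'' and then leave it as a plan; the patching argument you sketch (covering $X$ by finitely many closed sets and absorbing a penalty of order $\dim(X)$) is precisely the hard part, and it is far from routine: fiberwise strict comparison does not transfer to comparison in $A$ by semicontinuity alone, because Cuntz subequivalence of sections over a closed piece of $X$ cannot in general be glued along overlaps without some extra regularity (this is why, say, Villadsen-type phenomena obstruct any naive argument when the base is infinite dimensional). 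As written, the proposal proves nothing beyond (a).

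The missing idea -- and the paper's entire content in step (b) -- is to invoke the theorem of Hirshberg, R{\o}rdam and Winter \cite[Theorem 4.6]{hrw}: a $\mathrm{C}(X)$-algebra over a finite dimensional compact Hausdorff space whose fibers are $\mathcal Z$-stable is itself $\mathcal Z$-stable. This is exactly where the hypothesis $\dim(X)<\infty$ enters (see Remark \ref{rem:hereditary}). Once $A$ is known to be $\mathcal Z$-stable, $\Cu(A)$ is almost unperforated by \cite[Theorem 4.5]{Rorijm}, so $r_A=0$ outright; there is no constant $c\,\dim(X)$ to control and no gluing to perform. Combined with your (a) this gives $\mathrm{rc}(A)=r_A=0$, and \cite[Theorem 4.4.1]{BRTTW} concludes that $\mathrm{W}(A)$ is hereditary. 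So the correct repair of your argument is not to carry out the patching, but to replace all of step (b) by the citation of the $\mathcal Z$-absorption theorem for continuous fields.
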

\begin{proof}
We know from Lemma \ref{lem:stfinite} that $A$ is residually stably finite, so $\mathrm{rc}(A)=\mathrm{r}_A$. We also know from \cite[Theorem 4.6]{hrw} that $A$ itself is $\mathcal Z$-stable, whence $\Cu(A)$ is almost unperforated (\cite[Theorem 4.5]{Rorijm}). Thus $r_A=0$. This implies that $A$ has radius of comparison zero and then \cite[Theorem 4.4.1]{BRTTW} applies to conclude that $\mathrm{W}(A)$ is hereditary.
\end{proof}

\begin{remark}\label{rem:hereditary}{\rm
 In the previous proposition, finite dimensionality is needed to ensure $\mathcal{Z}$-stability of the continuous field. Notice that in the case of a trivial continuous field $A=C(X,D)$ where $D$ is simple, finite, and $\mathcal Z$-stable, the same argument can be applied for arbitrary (infinite dimensional) compact Hausdorff spaces.}
\end{remark}

\begin{definition}
 Let $X$ be a topological space, let $M$ be a semigroup in $\PreCu$, and let $f\colon X\to M$ be a map. We say that $f$ is \emph{lower semicontinuous} if, for all
$a\in M$, the set $\{t\in X\mid a\ll f(t)\}$ is open in $X$. We shall denote the set of lower semicontinuous functions by $\mathrm{\mathrm{Lsc}}(X,M)$ and the set of bounded lower semicontinuous functions by $\mathrm{\mathrm{Lsc}}_{\mathrm{b}}(X,M)$. Note that, if $M\in \Cu$, then $\mathrm{Lsc}(X,M)=\mathrm{Lsc}_{\mathrm{b}}(X,M)$. Furthermore, the sets just defined become ordered semigroups when equipped with pointwise order and addition.
\end{definition}

Recall that a compact metric space $X$ is termed \emph{arc-like} provided $X$ can be written as the inverse limit of intervals. Note that arc-like spaces include non-trivial examples, such as the pseudo-arc which is a one dimensional space that does not contain an arc (see e.g. \cite{nadler}).

\begin{proposition}
\label{lem:cuntzse}
Let $X$ be an arc-like compact metric space, and let $A$ be a unital, simple C$^*$-algebra with stable rank one, and finite radius of comparison. Then $\mathrm{W}(\mathrm{C}(X,A))$ is hereditary.
\end{proposition}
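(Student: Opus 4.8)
\emph{The plan} is to verify that $\mathrm{C}(X,A)$ has finite radius of comparison and then to conclude exactly as in Proposition~\ref{prop:ctsfieldhered}, via \cite[Theorem 4.4.1]{BRTTW}. The first step is to bring the arc-like hypothesis into play: writing $X=\varprojlim([0,1],g_n)$ as an inverse limit of intervals with surjective bonding maps $g_n$, and using that the contravariant assignment $\mathrm{C}(-,A)$ turns this inverse system into an inductive system, one obtains
\[
\mathrm{C}(X,A)\cong\varinjlim\bigl(\mathrm{C}([0,1],A),\,(g_n)^\ast\bigr),
\]
where each connecting map $(g_n)^\ast\colon f\mapsto f\circ g_n$ is an injective unital $^\ast$-homomorphism. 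This reduces the problem to controlling the radius of comparison of the single building block $\mathrm{C}([0,1],A)=\mathrm{C}([0,1])\otimes A$ together with its behaviour under the inductive limit.

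For the building block I would invoke the dimension estimates for the radius of comparison from \cite{BRTTW}: since $A$ has finite radius of comparison and $\dim[0,1]=1$, these yield a bound of the shape $\mathrm{rc}(\mathrm{C}([0,1],A))\le\mathrm{rc}(A)+\tfrac{1}{2}<\infty$. Combining this with the fact that the radius of comparison cannot increase in a limit, namely $\mathrm{rc}(\varinjlim A_n)\le\liminf_n\mathrm{rc}(A_n)$, one arrives at
\[
\mathrm{rc}(\mathrm{C}(X,A))\le\mathrm{rc}(A)+\tfrac{1}{2}<\infty.
\]

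To pass from finite radius of comparison to the hereditary conclusion I would argue as in Proposition~\ref{prop:ctsfieldhered}. The algebra $\mathrm{C}(X,A)$ is a unital $\mathrm{C}(X)$-algebra each of whose fibres is $A$, which has stable rank one; hence Lemma~\ref{lem:stfinite} shows that $\mathrm{C}(X,A)$ is residually stably finite, so that $r_{\mathrm{C}(X,A)}=\mathrm{rc}(\mathrm{C}(X,A))$ is finite. An application of \cite[Theorem 4.4.1]{BRTTW} then gives that $\mathrm{W}(\mathrm{C}(X,A))$ is hereditary, as desired.

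The step I expect to be the main obstacle is the quantitative control of the radius of comparison in the second paragraph, which rests on two inputs from \cite{BRTTW} being available in precisely the required form: the dimension bound for $\mathrm{rc}(\mathrm{C}([0,1],A))$ and the inductive-limit estimate $\mathrm{rc}(\varinjlim A_n)\le\liminf_n\mathrm{rc}(A_n)$. Everything else is routine given the results already collected in the excerpt. It is worth noting that it is exactly the arc-like hypothesis that lets us apply the dimension bound only to the interval, thereby sidestepping any covering-dimension estimate for the (possibly pathological) limit space $X$ itself.
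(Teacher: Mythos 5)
Your overall skeleton is the same as the paper's: establish $\mathrm{rc}(\mathrm{C}(X,A))<\infty$, use Lemma \ref{lem:stfinite} to get residual stable finiteness (so the algebraic and the usual radius of comparison agree), and conclude via \cite[Theorem 4.4.1]{BRTTW}. The genuine gap is exactly the step you flagged: the estimate $\mathrm{rc}(\mathrm{C}([0,1],A))\le \mathrm{rc}(A)+\tfrac{1}{2}$ is not in \cite{BRTTW}, and it is not an available general fact. The dimension-type results of \cite{BRTTW} concern commutative algebras only, i.e.\ bounds on $\mathrm{rc}(\mathrm{C}(X))$ in terms of $\dim X$; how the radius of comparison behaves under passing from $A$ to $\mathrm{C}(X,A)=\mathrm{C}(X)\otimes A$ for noncommutative $A$ is not treated there, and estimates of that kind are genuinely hard (they appeared only much later, under extra hypotheses). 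A telling symptom: apart from stable finiteness of fibres, your argument never uses that $A$ is simple or has stable rank one, so if your claimed bound held for arbitrary unital $A$, the proposition would hold in a generality the paper does not (and cannot, with these methods) claim. The auxiliary fact $\mathrm{rc}(\varinjlim A_n)\le\liminf_n \mathrm{rc}(A_n)$, which you also attribute to \cite{BRTTW}, is likewise not clearly available there and would need an argument.

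This missing estimate is precisely where the paper's proof does its real work and where the hypotheses enter. Since $A$ is simple with stable rank one, the paper combines \cite[Theorem 2.6]{adps} and \cite[Proposition 5.18]{APS} with continuity of the functor $\Cu$ over the inverse limit $X=\varprojlim\, [0,1]$ to obtain $\Cu(\mathrm{C}(X,A))\cong \mathrm{Lsc}(X,\Cu(A))$. Because the order on $\mathrm{Lsc}(X,\Cu(A))$ is pointwise, an inequality $(n+1)x+m[1]\le ny$ can be tested fibrewise, which gives $r_{\mathrm{C}(X,A)}\le r_A$ with \emph{no} dimensional correction at all; residual stable finiteness then yields $\mathrm{rc}(\mathrm{C}(X,A))=r_{\mathrm{C}(X,A)}\le r_A=\mathrm{rc}(A)<\infty$. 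So to close your proof you would either have to prove the tensor-product estimate yourself (nontrivial), or replace your second paragraph by this Cuntz-semigroup identification — at which point your inductive-limit decomposition and the $\liminf$ estimate become unnecessary, since the identification handles the arc-like space directly.
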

\begin{proof}
We will prove that $\mathrm{C}(X,A)$ has finite radius of comparison, and then appeal to \cite[Theorem 4.4.1]{BRTTW}. 
Since $\Cu$ is a continuous functor and $X$ is an inverse limit of intervals, we can combine  \cite[Theorem 2.6]{adps} and \cite[Proposition 5.18]{APS} to obtain $\Cu(\mathrm{C}(X,A))\cong \mathrm{Lsc}(X,\Cu(A))$.
Now, by Lemma \ref{lem:stfinite}, $\mathrm{C}(X,A)$ is residually stably finite, and hence by \cite[Proposition 3.3]{BRTTW} $\mathrm{rc}(\mathrm{C}(X,A))=r_{\mathrm{C}(X,A)}$. Since the order in $\mathrm{Lsc}(X, \Cu(A))$ is the pointwise order, it is easy to verify that $r_{\mathrm{C}(X,A)}\leq r_A$. Note that this is in fact an equality as $A$ is a quotient of $\mathrm{C}(X,A)$ (see condition (i) in \cite[Proposition 3.2.4]{BRTTW}). 
\end{proof}

\section{Lower semicontinuous functions, continuous sections, and Riesz interpolation}
\label{sec:lsc}

In this section we prove that the Grothendiek group of the Cuntz semigroup of certain continuous fields has Riesz interpolation. In some cases we apply the results on hereditariness from Section \ref{SectionHer} together with results in \cite{APS}, \cite{adps} and  \cite{ABP2}, and then if $A$ is simple, unital, ASH, with slow dimension growth, we apply the description of $\mathrm{W}(\mathrm{C}(X,A))$ given in \cite{Tiku}.

We recall the necessary definitions.

Let $(M,\leq)$ be a partially ordered semigroup. We say that $M$ is an \emph{interpolation semigroup} if  it satisfies the \emph{Riesz interpolation property}, that is,  whenever $a_1,a_2,b_1,b_2\in M$ are such that $a_i\leq b_j$ for
 $i,j=1,2$, there exists $c\in M$ such that $a_i\leq c\leq b_j$ for $i,j=1,2$. If the order is algebraic and $M$ is cancellative, then this property is well known to be equivalent to the Riesz decomposition property and also to the Riesz refinement property (see, e.g. \cite{Goo}).

Define $\mathcal{C}$ as the full subcategory of $\PreCu$ whose objects are those semigroups $M$ such that $\iota\colon M\to \overline M$ is hereditary.

\begin{lemma}\label{interpolation}
 Let $M$ be a semigroup in $\mathcal{C}$. Then $M$ is an interpolation semigroup if and only if its completion $\overline{M}$ is.
\end{lemma}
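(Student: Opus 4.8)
The plan is to prove the two implications separately. The reverse implication (if $\overline{M}$ interpolates then so does $M$) is short and is the only place the hypothesis $M\in\mathcal{C}$ is used. Given $a_1,a_2,b_1,b_2\in M$ with $a_i\leq b_j$ for $i,j=1,2$, I would apply $\iota$ to obtain $\iota(a_i)\leq\iota(b_j)$ in $\overline{M}$ and use interpolation there to get some $c\in\overline{M}$ with $\iota(a_i)\leq c\leq\iota(b_j)$. Since $c\leq\iota(b_1)$ with $\iota(b_1)\in\iota(M)$, hereditariness of the embedding forces $c=\iota(d)$ for some $d\in M$; because $\iota$ is an order-embedding it reflects order, so $a_i\leq d\leq b_j$ and $d$ is the desired interpolant in $M$.

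The forward implication (if $M$ interpolates then so does $\overline{M}$) is the substantial part and does not need hereditariness. Take $x_1,x_2,y_1,y_2\in\overline{M}$ with $x_i\leq y_j$. Using the completion property I would write $x_i=\sup_n\iota(a_n^i)$ and $y_j=\sup_m\iota(b_m^j)$ for rapidly increasing sequences $(a_n^i),(b_m^j)$ in $M$. The first step is a compact-containment observation: since $a_n^i\ll a_{n+1}^i$ in $M$ and $\iota$ preserves $\ll$, we have $\iota(a_n^i)\ll\iota(a_{n+1}^i)\leq x_i\leq y_j=\sup_m\iota(b_m^j)$, so the definition of $\ll$ yields an index $m$ with $\iota(a_n^i)\leq\iota(b_m^j)$, i.e.\ $a_n^i\leq b_m^j$ in $M$. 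Thus every lower approximant sits, inside $M$, below some upper approximant.

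The core is then an inductive construction of an \emph{increasing} sequence $(d_n)$ in $M$ whose image has the right supremum. At stage $n$, having built $d_{n-1}$ with $d_{n-1}\leq b_k^j$ for both $j$, I would use the observation above together with monotonicity of the $b^j$ to choose an index $k_n$ large enough that all three elements $a_n^1,a_n^2,d_{n-1}$ lie below both $b_{k_n}^1$ and $b_{k_n}^2$. The Riesz property of $M$, extended from the $2\times2$ case to finite families by the usual induction, then produces $d_n\in M$ with $a_n^i,d_{n-1}\leq d_n\leq b_{k_n}^j$. By construction $(d_n)$ is increasing, so $z:=\sup_n\iota(d_n)$ exists in $\overline{M}$ because $\overline{M}\in\Cu$. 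Then $\iota(a_n^i)\leq\iota(d_n)\leq z$ gives $x_i\leq z$, while $\iota(d_n)\leq\iota(b_{k_n}^j)\leq y_j$ gives $z\leq y_j$, so $z$ interpolates.

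The hard part will be the bookkeeping that keeps $(d_n)$ a single increasing chain while controlling it \emph{from above by honest elements} $b_{k_n}^j$ of $M$ rather than by the limits $y_j$: it is precisely these in-$M$ upper bounds that let the limit $z$ be squeezed below each $y_j$, and it is the demand that $d_{n-1}\leq d_n$ that forces carrying $d_{n-1}$ along as a third lower bound and hence using the finite-family (here $3\times2$) form of interpolation. I expect this interplay — choosing $k_n$ so that the previous interpolant and the two new approximants share a common upper bound in $M$ — to be the only genuinely delicate point; everything else is a routine use of the $\PreCu$/$\Cu$ axioms and the fact that $\iota$ is an order-embedding preserving $\ll$.
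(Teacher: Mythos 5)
Your proposal is correct and follows essentially the same route as the paper: both directions match, including the reverse implication via hereditariness and order reflection, and the forward implication via rapidly increasing sequences from $M$, the compact-containment step to land each $\iota(a_n^i)$ below some $\iota(b_m^j)$, and the inductive construction of an increasing chain $(d_n)$ using $3\times 2$ interpolation (carrying the previous interpolant as a third lower bound), whose supremum in $\overline{M}$ is the desired interpolant. The paper's proof is exactly this argument, so there is nothing to add.
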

\begin{proof}
Assume that $M$ satisfies the Riesz interpolation property and let $a_i\leq b_j$ be elements in $\overline{M}$ for $i,j\in\{1,2\}$. Denote by $\iota\colon M\to\overline{M}$
the corresponding order-embedding completion map.  We may write $a_i=\sup(\iota(a^n_i))$ and $b_j=\sup(\iota(b^n_j))$ for
$i,j\in \{1,2\}$, where $(a^n_i)$ and $(b^n_j)$ are rapidly increasing sequences in $M$. Find $m_1\geq 1$ such that $\iota(a_i^1)\leq\iota(b_j^ {m_1})$. Then $a_i^1\leq b_j^{m_1}$ and by the Riesz interpolation property there is $c_1\in M$ such that 
$a_i^1\leq c_1\leq b_j^{m_1}$. Suppose we have constructed $c_1\leq\dots\leq c_n$ in $M$ and $m_1<\dots <m_n$ such that $a_i^k\leq c_k\leq b_j^{m_k}$ for each $k$. Find $m_{n+1}>m_n$ such that $a_i^{n+1},c_n\leq b_j^{m_{n+1}}$, and by the interpolation property there exists $c_{n+1}\in M$ with $a_i^{n+1},c_n\leq c_{n+1}\leq  b_j^{m_{n+1}}$. Now let $\bar{c}=\sup\iota(c_n)\in\overline M$, and it is clear that $a_i\leq\bar{c}\leq b_j$ for all $i,j$.

Since $\iota$ is a hereditary order-embedding, the converse implication is immediate.
\end{proof}

\begin{lemma}
\label{lemaprevi}
Let $M\in\Cu$ satisfy the property that, whenever $a_i, b_j$ ($i,j=1,2$) are elements in $M$ and $a_i\ll b_j$ for all $i$ and $j$, then, for every $a_i'\ll a_i$, there is $c\in M$ such that $a_i'\ll c\ll b_j$. Then $M$ is an interpolation semigroup.
\end{lemma}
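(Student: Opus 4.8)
The plan is to upgrade the hypothesis, which is an interpolation statement phrased with the compact-containment relation $\ll$, to the genuine Riesz interpolation property phrased with $\le$. So suppose $a_1,a_2,b_1,b_2\in M$ satisfy $a_i\le b_j$ for all $i,j$, and let me produce $c$ with $a_i\le c\le b_j$. Since $M\in\Cu$, I write $a_i=\sup_n a_{i,n}$ and $b_j=\sup_m b_{j,m}$ with rapidly increasing sequences. The goal is to build a rapidly increasing sequence $(c_n)$ in $M$ with $c_n\ll b_j$ for $j=1,2$ and all $n$, and with $\sup_n c_n\ge a_i$ for $i=1,2$; then $c:=\sup_n c_n$ does the job, because $c_n\ll b_j$ forces $c_n\le b_j$ and hence $c\le b_j$, while each approximant $a_{i,k}$ will be arranged to lie below some $c_n$, so that $a_i=\sup_k a_{i,k}\le c$. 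To feed the approximants to the hypothesis I process the pairs $a_{i,k}$ one at a time along a fixed schedule visiting every $(i,k)$, which guarantees that $\sup_n c_n$ dominates every $a_{i,k}$, hence each $a_i$.

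The essential difficulty, and the step I expect to be most delicate, is that the hypothesis only produces an interpolant $c$ dominating elements \emph{strictly below} the middle terms (one gets $a_i'\ll c$, not $a_i\ll c$). Consequently I cannot directly place the running term $c_n$ below its successor: that would require a middle term lying strictly between $c_n$ and the pair $b_1,b_2$, which is precisely an instance of the interpolation being proved. I resolve this by carrying a \emph{witness}: I maintain throughout the invariant $c_n\ll c_n^\sharp\ll b_j$ (for $j=1,2$), together with the requirement that $c_n$ dominates the approximant scheduled at stage $n$. The witness $c_n^\sharp$ is exactly the middle term that the hypothesis needs in order to reinsert $c_n$ beneath the next element.

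Each inductive step then applies the hypothesis twice. First, with upper terms $b_1,b_2$, middle terms $c_n^\sharp$ and $a_{i,k+1}$ (both $\ll b_1,b_2$), and lower terms $c_n\ll c_n^\sharp$ and $a_{i,k}\ll a_{i,k+1}$, it yields $c_{n+1}^\sharp$ with $c_n\ll c_{n+1}^\sharp$, $a_{i,k}\ll c_{n+1}^\sharp$ and $c_{n+1}^\sharp\ll b_1,b_2$. Second, with upper terms $c_{n+1}^\sharp,c_{n+1}^\sharp$ and lower terms $c_n,a_{i,k}$, it yields the new term $c_{n+1}$ with $c_n\ll c_{n+1}$, $a_{i,k}\ll c_{n+1}$ and $c_{n+1}\ll c_{n+1}^\sharp$, restoring the invariant. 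The middle terms required for this second application, lying strictly between $c_n$ (respectively $a_{i,k}$) and the single upper bound $c_{n+1}^\sharp$, exist automatically, since in any $\Cu$-semigroup $p\ll z$ yields some $m$ with $p\ll m\ll z$ by writing $z$ as the supremum of a rapidly increasing sequence and using $p\le z_{l_0}\ll z_{l_0+1}\ll z$. The base case is obtained in the same fashion, with $c_n^\sharp$ replaced by an approximant $a_{i,k+1}$ of the first scheduled datum. Setting $c=\sup_n c_n$ then completes the proof; the only real content beyond routine manipulation of $\ll$ is the bookkeeping with the witness $c_n^\sharp$, which compensates for the hypothesis delivering interpolants only above strictly smaller elements.
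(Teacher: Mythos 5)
Your proposal is correct and takes essentially the same approach as the paper: both proofs build a rapidly increasing sequence by iterated application of the hypothesis, maintaining a compactly contained witness (your $c_n^\sharp$, the paper's $c_n'$ with $c_n\ll c_n'$) precisely so that the already constructed term can be re-fed to the hypothesis as a ``strictly below'' element, and then pass to the supremum. The remaining differences are pure bookkeeping: you keep the upper terms equal to $b_1,b_2$ and schedule one approximant per step (so each application involves only pairs), whereas the paper pulls the upper bounds down to approximants $b_j^{m_n}$ and absorbs both $a_1^n$ and $a_2^n$ in a single step.
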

   \begin{proof}
Suppose that $a_i\leq b_j$ in $M$ (for $i,j=1,2$). Write $a_i=\sup a_i^n$ and $b_j=\sup b_j^m$, where $(a_i^n)$ and $(b_j^m)$ are rapidly increasing sequences in $M$. Since $a_i^1\ll a_i^2\ll b_j$, there is $m_1\geq 1$ such that $a_i^2\ll b_j^{m_1}$. By assumption, there are elements $c_1\ll c_1'$ in $M$ such that $a_i^1\ll c_1\ll c_1'\ll b_j^{m_1}$. Now, there is $m_2>m_1$ such that $c_1', {a_i^3}\ll b_j^{m_2}$, so a second application of the hypothesis yields elements $c_2\ll c_2'$ with $c_1,a_i^2\ll c_2\ll c_2'\ll b_j^{m_2}$. Continuing in this way we find an increasing sequence $(c_n)$ in $M$ whose supremum $c$ satisfies $a_i\leq c\leq b_j$.    
   \end{proof}

\begin{remark}
\label{remarca}
{\rm
 It is proved in \cite[Theorem 5.15]{APS} that, if $X$ is a finite dimensional, compact metric space and $M\in \Cu$ is countably based, then $\mathrm{Lsc}(X,M)$ is also in $\Cu$. As it turns out from the proof of this fact, every function in $\mathrm{Lsc}(X,M)$ is a supremum of a rapidly increasing sequence of functions, each of which takes finitely many values.}
\end{remark}
\begin{proposition}\label{tres}
 Let $M\in \mathcal{C}$ be countably based, let $(\overline{M},\iota)$ be its completion, and let $X$ be a finite dimensional, compact metric space. Then 
$\mathrm{Lsc}_{\mathrm{b}}(X,M)$ is an object of $\mathcal{C}$ and $(\mathrm{Lsc}(X,\overline{M}),i)$ is its completion, where $i$ is induced by $\iota$.
\end{proposition}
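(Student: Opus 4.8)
The plan is to exhibit $(\mathrm{Lsc}(X,\overline{M}),i)$ as a completion of $\mathrm{Lsc}_{\mathrm{b}}(X,M)$ with $i$ hereditary, and then invoke uniqueness of completions (\cite[Theorem 5.1]{ABP}) to finish both assertions at once. First I would record two preliminaries. Since $\iota$ is a $\PreCu$ order-embedding into the $\Cu$-semigroup $\overline{M}$, it not only preserves but also \emph{reflects} $\ll$: if $\iota(a)\ll\iota(b)$ and $(b_n)$ is increasing in $M$ with $b\leq\sup b_n$, then $\iota(b)\leq\sup\iota(b_n)$ forces $\iota(a)\leq\iota(b_{n_0})$ for some $n_0$, whence $a\leq b_{n_0}$; so $a\ll b$. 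Second, a countable basis of $M$ maps under $\iota$ to a countable basis of $\overline{M}$ (every element of $\overline{M}$ is a rapidly increasing supremum of elements of $\iota(M)$), so $\overline{M}$ is countably based and Remark \ref{remarca} gives $\mathrm{Lsc}(X,\overline{M})\in\Cu$ together with the crucial approximation of its elements by rapidly increasing sequences of finitely valued functions. I then set $i(f):=\iota\circ f$. That $i(f)$ is lower semicontinuous follows by a local argument: if $x\ll\iota(f(t_0))$, writing $f(t_0)=\sup_j y_j$ (rapidly increasing in $M$, as $M\in\PreCu$) yields $j$ with $x\leq\iota(y_j)$ and $y_j\ll f(t_0)$, and then the open set $\{t: y_j\ll f(t)\}$ is a neighbourhood of $t_0$ contained in $\{t: x\ll\iota(f(t))\}$. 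Additivity and the order-embedding property of $i$ are immediate from those of $\iota$.

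The heart of the argument is the density condition: every $F\in\mathrm{Lsc}(X,\overline{M})$ should be a (rapidly) increasing supremum of functions $i(F_n)$ with $F_n\in\mathrm{Lsc}_{\mathrm{b}}(X,M)$. By Remark \ref{remarca} I write $F=\sup_n G_n$ with $(G_n)$ rapidly increasing and each $G_n$ finitely valued in $\overline{M}$. The key step is an interpolation: given $G\ll G'$ with $G$ finitely valued, I find $H\in\mathrm{Lsc}_{\mathrm{b}}(X,M)$ with $G\leq i(H)\leq G'$, obtained by replacing each of the finitely many values $x=\sup_k\iota(x_k)$ of $G$ by a sufficiently large $\iota(x_k)$ on a slightly shrunk support, using the compact containment $G\ll G'$ and the finiteness of the value set (so that a single index $k$ suffices on each piece). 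Applying this to the pairs $G_n\ll G_{n+1}$ produces an increasing sequence with $G_n\leq i(F_n)\leq G_{n+1}$, hence $\sup_n i(F_n)=F$. This \emph{increasing} density is then bootstrapped: for an increasing $(h_n)$ in $\mathrm{Lsc}_{\mathrm{b}}(X,M)$ with supremum $h$, writing $\sup_n i(h_n)=\sup_k i(\phi_k)$ with $(\phi_k)$ from density gives $\phi_k\leq h$ and thus $\sup_n i(h_n)=i(h)$, so $i$ preserves suprema; combined with reflection of $\ll$ (proved exactly as for $\iota$, now that suprema are preserved) the approximating sequences can be taken rapidly increasing in $\mathrm{Lsc}_{\mathrm{b}}(X,M)$. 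The same interpolation applied to $i(f)$ shows $f=\sup F_n$ in $\mathrm{Lsc}_{\mathrm{b}}(X,M)$ (the $F_n$ lie below $f$ since $i$ is an order-embedding, and any bounded upper bound $g$ satisfies $i(g)\geq\sup i(F_n)=i(f)$, so $g\geq f$). With the routine pointwise compatibility of $\ll$ and suprema with addition inherited from $M$, this establishes $\mathrm{Lsc}_{\mathrm{b}}(X,M)\in\PreCu$ and that $i$ is a $\PreCu$-morphism satisfying the completion axioms.

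Finally I would verify that $i$ is hereditary. Let $G\in\mathrm{Lsc}(X,\overline{M})$ and $f\in\mathrm{Lsc}_{\mathrm{b}}(X,M)$ with $G\leq i(f)$, i.e.\ $G(t)\leq\iota(f(t))$ for all $t$. Since $\iota$ is hereditary (as $M\in\mathcal{C}$) and $\iota(f(t))\in\iota(M)$, each $G(t)$ lies in $\iota(M)$; define $g:=\iota^{-1}\circ G\colon X\to M$. Because $\iota$ reflects $\ll$, for every $a\in M$ we have $\{t: a\ll g(t)\}=\{t:\iota(a)\ll G(t)\}$, which is open, so $g\in\mathrm{Lsc}(X,M)$; it is bounded by $f$, hence $g\in\mathrm{Lsc}_{\mathrm{b}}(X,M)$ and $i(g)=G$. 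Thus $G\in i(\mathrm{Lsc}_{\mathrm{b}}(X,M))$, so $i$ is hereditary, placing $\mathrm{Lsc}_{\mathrm{b}}(X,M)$ in $\mathcal{C}$ and identifying $(\mathrm{Lsc}(X,\overline{M}),i)$ as its completion.

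I expect the genuine obstacle to be the interpolation/pull-back step in the second paragraph: producing, below a finitely valued $\overline{M}$-valued function, a lower semicontinuous $M$-valued function with correctly shrunk supports, and verifying that suprema are preserved despite $M$ failing to be complete (so that suprema in $\mathrm{Lsc}_{\mathrm{b}}(X,M)$ need not be computed pointwise). Controlling this uniformly over the finitely many values while maintaining lower semicontinuity and the rapidly increasing condition is precisely where the finite dimensionality of $X$ (through Remark \ref{remarca}) and the hereditariness of $\iota$ must be used with care.
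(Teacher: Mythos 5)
Your skeleton (well-definedness and order-embedding of $i$, density of the image, hereditariness of $i$) is the right one, and your final paragraph proving hereditariness of $i$ is correct. The genuine gap is exactly the step you yourself flag as the heart of the matter: producing, for $G\ll G'$ with $G$ finitely valued in $\overline{M}$, a function $H\in\mathrm{Lsc}_{\mathrm{b}}(X,M)$ with $G\leq i(H)\leq G'$. The construction you sketch --- replacing each value $x=\sup_k\iota(x_k)$ of $G$ by a large $\iota(x_k)$ on a slightly shrunk support --- cannot produce the left-hand inequality, since both moves only push the function down: at a point $t$ where $G(t)=x$ and $H(t)=x_k$, the requirement $G\leq i(H)$ reads $x\leq\iota(x_k)\leq x$, forcing $x=\iota(x_k)\in\iota(M)$. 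More generally, \emph{no} construction can succeed unless the values of $G$ already lie in $\iota(M)$: if $G\leq i(H)$ then $G(t)\leq\iota(H(t))$ for every $t$, and hereditariness of $\iota$ (the hypothesis $M\in\mathcal{C}$) forces $G(t)\in\iota(M)$. So the interpolation you are after is \emph{equivalent} to the values of $G$ lying in $\iota(M)$, and once that is known no interpolation is needed: $H=\iota^{-1}\circ G$ does the job outright.

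That the values do lie in $\iota(M)$ is a one-liner from tools you already assembled, and it is how the paper argues. Write $F=\sup_n G_n$ with $(G_n)$ rapidly increasing and finitely valued (Remark \ref{remarca}); then $G_n\ll F$ gives $G_n(x)\ll F(x)$ for every $x$ (compact containment in $\mathrm{Lsc}(X,\overline M)$ implies pointwise compact containment, cf.\ \cite[Proposition 5.5]{APS}); writing $F(x)=\sup_k\iota(y_k)$ with $(y_k)$ rapidly increasing in $M$ --- this is the definition of the completion --- yields $G_n(x)\leq\iota(y_k)$ for some $k$, and hereditariness of $\iota$ gives $G_n(x)\in\iota(M)$. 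Hence each $G_n$ is already of the form $i(H_n)$, where $H_n=\iota^{-1}\circ G_n$ is finitely valued, bounded, and lower semicontinuous because $\iota$ reflects $\ll$ (your first preliminary). Note that this is precisely the argument of your third paragraph on hereditariness of $i$, applied pointwise to the pairs $G_n(x)\ll F(x)$; deploying it in the density step dissolves the ``genuine obstacle'' of your second paragraph, and with it the downstream worries about support-shrinking, suprema in $\mathrm{Lsc}_{\mathrm{b}}(X,M)$ not being computed pointwise, and maintaining the rapidly increasing condition.
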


\begin{proof}
Notice that $\mathrm{Lsc}(X,\overline{M})\in\Cu$ and that $i(f)=\iota\circ f$ defines an order-embedding. 

Given $f\in \mathrm{Lsc}(X,\overline{M})$, write $f=\sup f_n$, where $(f_n)$ is a rapidly increasing sequence of functions taking finitely many values. Since $f_n\ll f$ and thus $f_n(x)\ll f(x)$ for every $x\in X$, the range of $f_n$ is a (finite) subset of $\iota(M)$. Therefore each $f_n$ belongs to $\mathrm{Lsc}_{\mathrm{b}}(X,M)$.
\end{proof}

\begin{proposition}\label{inter}
Let $M$ be a countably based, interpolation semigroup in $\Cu$, and let $X$ be a finite dimensional, compact metric space. Then $\mathrm{Lsc}(X,M)$ is an interpolation semigroup.
  \end{proposition}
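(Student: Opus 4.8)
The plan is to show that $\mathrm{Lsc}(X,M)$ meets the hypothesis of Lemma \ref{lemaprevi}, since by Remark \ref{remarca} we already know $\mathrm{Lsc}(X,M)\in\Cu$ (here $M$ is countably based and $X$ is finite dimensional and compact metric). So I must take $f_1,f_2,g_1,g_2\in\mathrm{Lsc}(X,M)$ with $f_i\ll g_j$ for all $i,j$, fix $f_i'\ll f_i$, and produce $h$ with $f_i'\ll h\ll g_j$. The first move is to pass to finite-valued data: using Remark \ref{remarca} to write each $f_i$ and $g_j$ as the supremum of a rapidly increasing sequence of functions taking finitely many values, I can choose finite-valued $s_i$ and $t_j$ with $f_i'\ll s_i\ll f_i$ and $s_i\ll t_j\ll g_j$ for all $i,j$. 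Since $\ll$ is transitive and absorbs $\leq$ on either side, it then suffices to find a finite-valued $h$ with $s_i\ll h\ll t_j$, because this yields $f_i'\ll h\ll g_j$.

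Next I would reduce the finite-valued interpolation to a combinatorial problem inside $M$. For a finite-valued $f\in\mathrm{Lsc}(X,M)$ taking value $a$ on a locally closed piece $E$, lower semicontinuity forces the sets $\{x:b\ll f(x)\}$ to be open, and a short argument shows that $f\ll g$ holds exactly when $a\ll g(x)$ for every $x\in\overline{E}$ (over all pieces $E$). Taking the common refinement of the finitely many superlevel sets of $s_1,s_2,t_1,t_2$, I obtain a finite partition of $X$ into locally closed pieces $E_1,\dots,E_r$ on which $s_i\equiv\sigma_{i,l}$ and $t_j\equiv\tau_{j,l}$ are constant. Equipping $\{1,\dots,r\}$ with the specialization order $l'\leq l$ iff $E_{l'}\subseteq\overline{E_l}$, lower semicontinuity translates into monotonicity of $\sigma_{i,\cdot}$ and $\tau_{j,\cdot}$, while the relation $s_i\ll t_j$ delivers the key inequalities $\sigma_{i,l}\ll\tau_{j,l'}$ whenever $l'\leq l$. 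I then seek a monotone assignment $l\mapsto\chi_l\in M$ (so that the function $h$ with $h|_{E_l}=\chi_l$ is lower semicontinuous) satisfying $\sigma_{i,l}\ll\chi_{l'}$ and $\chi_l\ll\tau_{j,l'}$ for all $l'\leq l$.

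To produce the $\chi_l$ I would build them by induction along a linear extension of the poset, at each step invoking a finite form of interpolation in $M$: from the Riesz interpolation property (extended to finitely many lower and upper bounds) together with the fact that $M\in\Cu$, one checks that whenever $a_1,\dots,a_p\ll b_1,\dots,b_q$ in $M$ there is $c$ with $a_k\ll c\ll b_r$ for all $k,r$ (first insert $a_k\ll a_k'\ll b_r$ using rapidly increasing approximations of the $b_r$, then interpolate the $a_k'$ below suitable finite-stage approximants of the $b_r$). Applying this at each piece, with the $\sigma_{i,l}$ and the previously chosen values (via monotonicity) as lower bounds and the $\tau_{j,l'}$ as upper bounds, should yield the desired $\chi_l$, and hence $h$.

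The hard part will be the assembly in the last two paragraphs: reconciling the requirement that $h$ be lower semicontinuous (monotonicity of $\chi_\cdot$ along closures) with the \emph{strict} relations $s_i\ll h\ll t_j$, which, as the characterization of $\ll$ for finite-valued functions shows, force the value of $h$ at boundary points of a high-value region of $s_i$ to strictly dominate that value, and dually for $t_j$. Keeping the finitely many lower and upper bounds mutually compatible, so that every lower bound is $\ll$ every upper bound throughout the induction over incomparable and closure-related pieces, is the delicate point, and it is precisely here that both the interpolation property of $M$ and the finite-dimensionality of $X$ (which, through Remark \ref{remarca}, guarantees the finite-valued approximations that make the combinatorial reduction possible) are indispensable.
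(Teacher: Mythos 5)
Your opening moves coincide with the paper's: both routes go through Lemma \ref{lemaprevi}, and your reduction via Remark \ref{remarca} to finite-valued approximants $f_i'\ll s_i\ll f_i$, $s_i\ll t_j\ll g_j$ is sound, as is (with the compactness argument you allude to) the characterization of compact containment against a finite-valued function, and the finite form of Riesz interpolation in $M$. But the proof stops exactly where the real work begins, and the combinatorial scaffolding you set up for that work is flawed. For the patched function $h$ (with $h|_{E_l}=\chi_l$) to be lower semicontinuous, the constraint at a point $x\in E_{l'}$ involves \emph{every} piece $E_l$ with $x\in\overline{E_l}$, i.e., every $l$ with $E_{l'}\cap\overline{E_l}\neq\emptyset$; containment $E_{l'}\subseteq\overline{E_l}$ is the wrong indexing relation. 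Already for two overlapping open disks $U_1,U_2$ in the square, with pieces $E_1=U_1\cap U_2$ and $E_2=U_1\setminus U_2$, one has $E_2\cap\overline{E_1}\neq\emptyset$ but neither piece is contained in the closure of the other, so your ``specialization order'' records no constraint between $\chi_1$ and $\chi_2$, and monotonicity along it permits assignments (say $\chi_2$ large, $\chi_1$ small) for which $h$ fails to be lower semicontinuous at points of $E_2\cap\overline{E_1}$. Moreover, the meets-closure relation that actually governs lower semicontinuity is not a partial order, so the induction along a linear extension has no foundation; and since each chosen value becomes a lower bound on all pieces whose closures it meets, keeping every lower bound $\ll$ every upper bound throughout the induction is precisely the compatibility problem you flag as ``the delicate point'' and never resolve. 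This is a genuine gap, not a routine verification left to the reader.

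The paper avoids global combinatorics altogether and works locally. In your notation: since $f_i'\ll f_i$, \cite[Proposition 5.5]{APS} gives, around each $x$, a closed neighborhood $U_x'$ and elements $c_{i,x}\in M$ with $f_i'(y)\ll c_{i,x}\ll f_i(y)$ for all $y\in U_x'$; interpolation in $M$ at the single point $x$ (where $c_{i,x}\ll f_i(x)\ll g_j(x)$) produces $d_x$ with $c_{i,x}\ll d_x\ll g_j(x)$, and lower semicontinuity of the $g_j$ spreads this to a smaller neighborhood $U_x$, on which $f_i'(y)\ll d_x\ll g_j(y)$. Compactness then gives a finite cover, and the patching argument of \cite[Proposition 5.13]{APS} glues the constants $d_x$ into a finite-valued $h\in\mathrm{Lsc}(X,M)$ with $f_i'\ll h\ll g_j$. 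Note that finite-dimensionality of $X$ enters essentially in that patching lemma (through covers of controlled order), not merely through the finite-valued approximations of Remark \ref{remarca}; to salvage your approach you would in effect have to reprove that patching result, which is exactly the content your sketch is missing.
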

\begin{proof}
We apply Lemma \ref{lemaprevi}, so assume $f_i\ll f_i'\ll g_j$ for $i,j=1,2$. Since $f_i\ll f_i'$, given $x\in X$ there is a neighborhood $U_x'$ of $x$ and $c_{i,x}\in M$ such that $f_i(y)\ll c_{i,x}\ll f_i'(y)$ for all $y\in U_x'$ (by \cite[Proposition 5.5]{APS}). Now $c_{i,x}\ll f_i'(y)\ll g_j(y)$ for each $y\in U_x'$, so in particular it will hold for $x$. Since $M$ is an interpolation semigroup, there is $d_x\in M$ such that $c_{i,x}\ll d_x\ll g_j(x)$ and, by lower semicontinuity of $g_j$, there is a neighborhood $U_x''$ such that $d_x\ll g_j(y)$ for every $y\in U_x''$. Thus, if $U_x=U_x'\cap U_x''$, we have $f_i(y)\ll d_x\ll g_j(y)$ for all $y\in U_x$.

We may now run the argument in \cite[Proposition 5.13]{APS} to patch the values $d_x$ into a function $h\in \mathrm{Lsc}(X,M)$ that takes finitely many values and $f_i\ll h\ll g_j$, as desired.
\end{proof}
\begin{corollary}\label{interC}
 Let $M$ be a countably based semigroup in $\mathcal{C}$ and let $X$ be a finite dimensional, compact metric space. Then, if $M$ is an interpolation semigroup, so is $\mathrm{Lsc}_{\mathrm{b}}(X,M)$.
\end{corollary}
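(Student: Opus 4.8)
The plan is to combine the three preceding results in the natural way. Corollary \ref{interC} claims that if $M$ is a countably based interpolation semigroup in $\mathcal{C}$, then so is $\mathrm{Lsc}_{\mathrm{b}}(X,M)$ for any finite dimensional compact metric space $X$. The key observation is that $M$ being in $\mathcal{C}$ means precisely that the completion map $\iota\colon M\to\overline{M}$ is hereditary, so Lemma \ref{interpolation} is available to transfer interpolation back and forth between $M$ and its completion $\overline{M}$.

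First I would apply Lemma \ref{interpolation} to the hypothesis: since $M\in\mathcal{C}$ is an interpolation semigroup, its completion $\overline{M}\in\Cu$ is also an interpolation semigroup. Next I would invoke Proposition \ref{inter}, which applies because $\overline{M}$ is a countably based interpolation semigroup in $\Cu$ (countably based being inherited by the completion of a countably based semigroup) and $X$ is finite dimensional compact metric; this yields that $\mathrm{Lsc}(X,\overline{M})$ is an interpolation semigroup. Then I would use Proposition \ref{tres}, which tells us that $\mathrm{Lsc}_{\mathrm{b}}(X,M)$ is itself an object of $\mathcal{C}$, with completion $(\mathrm{Lsc}(X,\overline{M}),i)$. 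Finally I would apply Lemma \ref{interpolation} once more, now in the reverse direction to the semigroup $\mathrm{Lsc}_{\mathrm{b}}(X,M)\in\mathcal{C}$: since its completion $\mathrm{Lsc}(X,\overline{M})$ has interpolation, so does $\mathrm{Lsc}_{\mathrm{b}}(X,M)$ itself.

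The only point requiring a moment of care is verifying that $\overline{M}$ is countably based so that Proposition \ref{inter} genuinely applies. This should follow from the construction of the completion in \cite[Theorem 5.1]{ABP}, where $\overline{M}$ is built from rapidly increasing sequences in $M$; a countable basis for $M$ furnishes a countable basis for $\overline{M}$. I do not expect a serious obstacle here, since the entire argument is a clean chaining of the established lemmas, and this is really the payoff corollary that packages Propositions \ref{tres} and \ref{inter} together via the two applications of Lemma \ref{interpolation}.
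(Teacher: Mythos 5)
Your proof is correct and follows essentially the same route as the paper: Lemma \ref{interpolation} to pass to $\overline{M}$, Proposition \ref{inter} for $\mathrm{Lsc}(X,\overline{M})$, Proposition \ref{tres} to identify this as the completion of $\mathrm{Lsc}_{\mathrm{b}}(X,M)$, and Lemma \ref{interpolation} once more. Your extra remark on the countable basedness of $\overline{M}$ is a legitimate point of care (implicitly assumed by the paper) and your justification of it is sound.
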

\begin{proof}
By Lemma \ref{interpolation} followed by Proposition \ref{inter}, the semigroup $\mathrm{Lsc}(X,\overline M)$ is an interpolation semigroup, where $\overline M$ is the completion of $M$. On the other hand, by Proposition \ref{tres}, $\mathrm{Lsc}(X,\overline M)$ is the completion of $\mathrm{Lsc}_{\mathrm{b}}(X,M)\in\mathcal C$, whence another application of Lemma \ref{interpolation} yields the conclusion.
\end{proof}

Let $(M,\leq)$ be a partially ordered semigroup. We denote by $\mathrm{G}(M)$ its Grothendieck group, and order $\mathrm{G}(M)$ by setting $\mathrm{G}(M)^+=\{[a]-[b]\mid b\leq a\}$ as its positive cone. This defines a partial order on $\mathrm{G}(M)$ and, for $a,b,c,d\in M $
$$[a]-[b]\leq [c]-[d]\text{ in }\mathrm{G}(M)\iff a+d+e\leq b+c+e \text{ in } M \text { for some }e\in M.$$

If $A$ is a C$^*$-algebra, we denote by $\mathrm{K}_0^*(A)$ the Grothendieck group of $\mathrm{W}(A)$ and by
$[a]-[b]$ the elements of this group, where
 $a,b\in M_\infty(A)_+$ (see \cite{Cu}). It is easy to see that the set of states on a semigroup 
can be naturally identified with the set of states on its Grothendieck group. 

Condition (i) in the result below is a special case of Theorem \ref{th:ctsfields}, but the proof in this case is easier.

\begin{theorem}
\label{interpolacio}
Let $X$ be a compact metric space, and let $A$ be a separable, C$^*$-algebra of stable rank one. Then $\mathrm{K}_0^*(\mathrm{C}(X,A))$ is an interpolation group in the following cases:
\begin{enumerate}[{\rm (i)}]
\item $\dim X\leq 1$, $\mathrm{K}_1(A)=0$ and has either real rank zero or is simple and $\mathcal Z$-stable. 
\item $X$ is arc-like, $A$ is simple and either has real rank zero and finite radius of comparison, or else is $\mathcal Z$-stable.
\item $\dim X\leq 2$ with vanishing second \v Cech cohomology group $\check{\mathrm{H}}^2(X,\mathbb Z)$, and $A$ is an infinite dimensional AF-algebra. 
\end{enumerate}
\end{theorem}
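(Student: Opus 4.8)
The plan is to prove each of the three cases by reducing the interpolation problem for $\mathrm{K}_0^*(\mathrm{C}(X,A))$ to the interpolation property of a stabilized Cuntz semigroup of the form $\mathrm{Lsc}(X,M)$, and then invoking Proposition \ref{inter} together with the machinery of Section \ref{SectionHer}. Throughout, the guiding principle (stated in the introduction) is that $\mathrm{K}_0^*(B)$ has interpolation whenever $\mathrm{W}(B)$ does, so it suffices in each case to establish that $\mathrm{W}(\mathrm{C}(X,A))$ is an interpolation semigroup. First I would reduce from $\mathrm{W}$ to $\Cu$: since $A$ has stable rank one, in all three cases $\mathrm{C}(X,A)$ has stable rank one (by Corollary \ref{CorDimOne} when $\dim X\leq 1$, and by the stable rank estimates for higher-dimensional $X$ combined with the hypotheses on $A$ in case (iii)), so $\mathrm{W}(\mathrm{C}(X,A))$ is hereditary in $\Cu(\mathrm{C}(X,A))$. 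By Lemma \ref{interpolation}, $\mathrm{W}(\mathrm{C}(X,A))$ has interpolation if and only if $\Cu(\mathrm{C}(X,A))$ does.

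The second step is to identify $\Cu(\mathrm{C}(X,A))$ with an $\mathrm{Lsc}$-semigroup. In cases (i) and (ii), I would invoke the representation theorems: under the hypothesis $\dim X\leq 1$ (case (i)) or $X$ arc-like (case (ii)), one has $\Cu(\mathrm{C}(X,A))\cong \mathrm{Lsc}(X,\Cu(A))$, combining \cite[Theorem 2.6]{adps}, \cite[Proposition 5.18]{APS}, and \cite[Theorem 5.15]{APS}; this is exactly the identification already used in the proof of Proposition \ref{lem:cuntzse}. Then the task reduces, via Proposition \ref{inter}, to showing that $M:=\Cu(A)$ is itself a countably based interpolation semigroup in $\Cu$. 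For $A$ simple with real rank zero and stable rank one, or $A$ simple and $\mathcal Z$-stable, $\Cu(A)$ is known to have Riesz interpolation (this is where the work of \cite{Per} and \cite{BPT} enters), and separability of $A$ gives the countable basis. The hypotheses $\mathrm{K}_1(A)=0$ and finite radius of comparison are what guarantee stable rank one and hereditariness so that the reduction in the first step is valid and $A$ has no $\mathrm{K}_1$-obstructions.

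For case (iii), where $A$ is an infinite-dimensional AF-algebra and $\dim X\leq 2$ with $\check{\mathrm{H}}^2(X,\mathbb Z)=0$, the approach is different because $\dim X$ can exceed $1$, so Corollary \ref{CorDimOne} does not directly apply; here I would use that an AF-algebra has real rank zero and no $\mathrm{K}_1$-obstructions, and appeal to the structure of $\mathrm{W}(\mathrm{C}(X,A))$ for such $A$. The cohomological hypothesis $\check{\mathrm{H}}^2(X,\mathbb Z)=0$ is precisely what removes the obstruction to a good $\mathrm{Lsc}$-type description in dimension $2$: vanishing of the second Čech cohomology ensures that line-bundle type invariants do not interfere, so that $\Cu(\mathrm{C}(X,A))$ is again computed as $\mathrm{Lsc}(X,\Cu(A))$ with $\Cu(A)$ an interpolation semigroup. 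I would then close the argument exactly as in cases (i) and (ii) via Proposition \ref{inter} and Lemma \ref{interpolation}.

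I expect the main obstacle to be case (iii). The subtlety is that for $\dim X = 2$ the clean identification $\Cu(\mathrm{C}(X,A))\cong\mathrm{Lsc}(X,\Cu(A))$ is not automatic; the sheaf-theoretic obstructions to patching local sections into global lower semicontinuous functions live in higher cohomology, and it is the vanishing of $\check{\mathrm{H}}^2(X,\mathbb Z)$ that must be shown to kill exactly these obstructions for an AF fiber. Verifying that this cohomological hypothesis suffices — rather than merely being necessary — and establishing the resulting $\mathrm{Lsc}$-description rigorously is where the real content lies; the interpolation step itself is then routine given Proposition \ref{inter}. A secondary point requiring care is confirming hereditariness of $\mathrm{W}(\mathrm{C}(X,A))$ in $\Cu(\mathrm{C}(X,A))$ in each regime, since the reduction in the first step depends on it; this follows from stable rank one (cases (i), (ii)) or from the finite radius of comparison arguments of Section \ref{SectionHer}, but the precise hypotheses must be matched to the results of \cite{ABP}, \cite{BRTTW}, and \cite{ABP2} invoked.
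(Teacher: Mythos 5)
Your overall skeleton is the paper's: reduce $\mathrm{K}_0^*$ to $\mathrm{W}$ via \cite[Lemma 4.2]{Per}, reduce $\mathrm{W}$ to $\Cu$ via hereditariness and Lemma \ref{interpolation}, identify $\Cu(\mathrm{C}(X,A))$ with $\mathrm{Lsc}(X,\Cu(A))$, and conclude by Proposition \ref{inter}. However, there is a genuine gap in your first step: the claim that ``in all three cases $\mathrm{C}(X,A)$ has stable rank one'' is false, and hereditariness cannot be obtained that way in cases (ii) and (iii). In case (ii) there is \emph{no} hypothesis that $\mathrm{K}_1(A)=0$; since an arc-like space is a one-dimensional compact metric space, Theorem \ref{StableRankOfDimOne} shows that $\mathrm{sr}(\mathrm{C}(X,A))=2$ whenever $\mathrm{K}_1(A)\neq 0$ (for $A$ simple, no $\mathrm{K}_1$-obstructions is equivalent to $\mathrm{K}_1(A)=0$), and simple real rank zero algebras with finite radius of comparison and nontrivial $\mathrm{K}_1$ certainly exist. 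In case (iii) the situation is even more concrete: take $A=\mathrm{C}(\text{Cantor set})$, an infinite-dimensional AF algebra, and $X=[0,1]^2$; then $\mathrm{C}(X,A)=\mathrm{C}(X\times \text{Cantor set})$ is a commutative C$^*$-algebra over a two-dimensional space and has stable rank $2$. So your appeal to ``stable rank estimates for higher-dimensional $X$'' cannot produce stable rank one there.

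The paper's route around this, which you mention only as a fallback without committing to it, is the radius-of-comparison argument of Proposition \ref{lem:cuntzse}: once $\Cu(\mathrm{C}(X,A))\cong\mathrm{Lsc}(X,\Cu(A))$ is in hand, $\mathrm{C}(X,A)$ is residually stably finite (Lemma \ref{lem:stfinite}) and $r_{\mathrm{C}(X,A)}\leq r_A<\infty$ (with $r_A=0$ in the AF and $\mathcal Z$-stable cases), so \cite[Theorem 4.4.1]{BRTTW} gives hereditariness of $\mathrm{W}(\mathrm{C}(X,A))$ with no stable rank input at all; stable rank one of $\mathrm{C}(X,A)$ (via Corollary \ref{CorDimOne}) is used only in case (i), where $\mathrm{K}_1(A)=0$ is assumed. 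A secondary point: your worry that the identification $\Cu(\mathrm{C}(X,A))\cong\mathrm{Lsc}(X,\Cu(A))$ under $\dim X\leq 2$ and $\check{\mathrm{H}}^2(X,\mathbb Z)=0$ is ``where the real content lies'' is misplaced --- this is not something to be established here, it is exactly \cite[Corollary 3.6]{APS} (and for case (i) the paper cites \cite[Theorem 3.4]{APS} rather than the arc-like combination \cite[Theorem 2.6]{adps} plus \cite[Proposition 5.18]{APS} that you quote, which only covers inverse limits of intervals, not general one-dimensional spaces).
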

\begin{proof}
(i): If $A$ has real rank zero, it was proved in \cite[Theorem 2.13]{Per} that $\mathrm{W}(A)$ satisfies the Riesz interpolation property, and then so does $\Cu(A)$ by Lemma \ref{interpolation}. In the case that $A$ is simple and $\mathcal Z$-stable, $\Cu(A)$ is an interpolation semigroup by \cite[Proposition 5.4]{Tiku}. Since, by \cite[Theorem 3.4]{APS}, $\Cu(\mathrm{C}(X,A))$ is order-isomorphic to $\mathrm{Lsc}(X,\Cu(A))$ we obtain, using Proposition \ref{inter}, that $\Cu(\mathrm{C}(X,A))$ is an interpolation semigroup in both cases. By Corollary \ref{CorDimOne}, $\mathrm{C}(X,A)$ has stable rank one, and so $\mathrm{W}(\mathrm{C}(X,A))$ is hereditary, hence also an interpolation semigroup by Lemma \ref{interpolation}. Thus $\mathrm{K}_0^*(\mathrm{C}(X,A))$ is an interpolation group (using \cite[Lemma 4.2]{Per}).

(ii): By Proposition \ref{lem:cuntzse} and its proof we see that $\mathrm{W}(\mathrm{C}(X,A))$ is hereditary and that $\Cu(\mathrm{C}(X,A))$ is order-iso\-mor\-phic to $\mathrm{Lsc}(X,\Cu(A))$. Now the proof follows the lines of the previous case.

(iii): This follows as above, using \cite[Corollary 3.6]{APS}, so that $\Cu(\mathrm{C}(X,A))$ is order-iso\-mor\-phic to $\mathrm{Lsc}(X,\Cu(A))$, and the proof of Proposition \ref{lem:cuntzse}, so that $\mathrm{W}(\mathrm{C}(X,A))$ is hereditary.
\end{proof}

\begin{remark}
\upshape{
Note that if $\mathrm{K}_1(A)\neq 0$ in case (ii) of Theorem \ref{interpolacio}, then $\mathrm{sr}(\mathrm{C}([0,1],A))= 2$ (see \cite[Proposition 5.2]{nop} and Section \ref{sr}). Notice also that it remains an open problem to decide whether a simple C$^*$-algebra $A$ with real rank zero and stable rank one must have weakly unperforated $\mathrm{K}_0(A)$ (which is in fact equivalent to the fact that $A$ has strict comparison).}
\end{remark}

We now turn our consideration to algebras of the form $\mathrm{C}(X,A)$ where $A$ is a unital, simple, non-type I ASH-algebra with slow dimension growth. In this setting we are able to obtain the same conclusion as above without the necessity to go over proving interpolation of $\Cu(\mathrm{C}(X,A))$. We first need a preliminary result.

\begin{proposition}\label{Grot}
 Let $N$ be a partially ordered abelian semigroup and let $M$ be an ordered subsemigroup of $N$ such that $M+N\subseteq M$. Then $\mathrm{G}(M)$ and $\mathrm{G}(N)$ are isomorphic as partially ordered abelian groups.
\end{proposition}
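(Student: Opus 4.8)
The plan is to show that the inclusion $\iota\colon M\hookrightarrow N$ induces the isomorphism, via the map $\mathrm{G}(\iota)\colon \mathrm{G}(M)\to\mathrm{G}(N)$ sending $[a]-[b]$ (with $a,b\in M$) to the same formal difference computed in $N$. This is well defined and a group homomorphism, since any witnessing relation $a+d+e=b+c+e$ holding in $M$ with $e\in M$ holds verbatim in $N$. The entire argument rests on one elementary consequence of the hypothesis $M+N\subseteq M$: fixing any $m\in M$, one has $m+z\in M$ for every $z\in N$, so that adding a fixed element of $M$ ``pulls'' arbitrary elements of $N$ into $M$. (Here I assume $M\neq\emptyset$, as the statement fails for nontrivial $N$ when $M$ is empty; in the intended applications $M$ is always nonempty.)

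First I would verify that $\mathrm{G}(\iota)$ is a group isomorphism. For surjectivity, given $[x]-[y]\in\mathrm{G}(N)$ with $x,y\in N$, I pick $m\in M$ and rewrite $[x]-[y]=[m+x]-[m+y]$; as $m+x,m+y\in M$, this exhibits the element as $\mathrm{G}(\iota)\bigl([m+x]-[m+y]\bigr)$. For injectivity, suppose $[a]-[b]\in\mathrm{G}(M)$ maps to $0$, i.e. $a+e=b+e$ in $N$ for some $e\in N$. Setting $e':=m+e\in M$ yields $a+e'=m+(a+e)=m+(b+e)=b+e'$ in $M$, so $[a]-[b]=0$ already in $\mathrm{G}(M)$.

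Next I would check compatibility with the positive cones, i.e. $\mathrm{G}(\iota)\bigl(\mathrm{G}(M)^+\bigr)=\mathrm{G}(N)^+$; since $\mathrm{G}(\iota)$ is a bijective homomorphism, this promotes it to an order isomorphism, because $g\le h$ is equivalent to $h-g$ lying in the positive cone and this is then both preserved and (by injectivity) reflected. The inclusion $\subseteq$ is immediate: if $b\le a$ in $M$ then $b\le a$ in $N$, as $M$ carries the order induced from $N$. For $\supseteq$, take $[x]-[y]\in\mathrm{G}(N)^+$ with $y\le x$ in $N$; with $m\in M$ as above, compatibility of the order with addition gives $m+y\le m+x$, both of these lie in $M$, and so $[m+x]-[m+y]\in\mathrm{G}(M)^+$ maps onto $[x]-[y]$.

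I do not expect a serious obstacle here: the whole content lies in systematically using $M+N\subseteq M$ to relocate into $M$ both the cancellation witnesses $e$ appearing in the Grothendieck relation and the order witnesses appearing in the cone. The only points demanding care are the standing assumption $M\neq\emptyset$ and the fact that an ordered subsemigroup carries the order induced from $N$, so that inequalities transfer faithfully between elements of $M$ computed in either semigroup — this is precisely what makes both cone inclusions clean.
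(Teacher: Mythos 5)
Your proof is correct, and it is essentially the mirror image of the paper's argument: both hinge on the same trick of adding a fixed element of $M$ (you call it $m$, the paper calls it $c$) to relocate arbitrary elements of $N$, as well as the cancellation and order witnesses, into $M$. The difference lies in which map gets built. You take the functorial map $\mathrm{G}(\iota)\colon \mathrm{G}(M)\to\mathrm{G}(N)$ induced by the inclusion and prove directly that it is bijective with $\mathrm{G}(\iota)\bigl(\mathrm{G}(M)^+\bigr)=\mathrm{G}(N)^+$. The paper instead constructs the inverse: it defines $\alpha\colon N\to\mathrm{G}(M)$ by $\alpha(a)=\gamma(a+c)-\gamma(c)$, where $\gamma\colon M\to\mathrm{G}(M)$ is the canonical map, checks that this is independent of $c$ and additive, invokes the universal property of $\mathrm{G}(N)$ to obtain $\alpha'\colon\mathrm{G}(N)\to\mathrm{G}(M)$, and then verifies injectivity, surjectivity and positivity of $\alpha'$ by computations identical in substance to yours. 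Your route is slightly more economical---the inclusion-induced map is well defined for free, so no universal-property invocation and no independence-of-$c$ check are needed---whereas the paper's route has the merit of exhibiting the inverse isomorphism explicitly. Two of your side remarks are worth keeping: the standing assumption $M\neq\emptyset$ is genuinely needed and is left implicit in the paper (which silently fixes $c\in M$), and the reading of ``ordered subsemigroup'' as carrying the order induced from $N$ is likewise used tacitly in the paper's positivity argument (where $b\leq a$ in $N$ is upgraded to $b+c\leq a+c$ in $M$).
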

\begin{proof}
 Let us denote by $\gamma\colon M\to \mathrm{G}(M)$ and $\eta\colon N\to \mathrm{G}(N)$ the natural Grothendieck maps. Fix
$c\in M$, and define $\alpha\colon N\to \mathrm{G}(M)$ by $\alpha(a):= \gamma(a+c)-\gamma(c)$.
Using that $M+N\subseteq M$, it is easy to verify that the definition of $\alpha$ does not depend on $c$. Now, if $a$, $b\in N$, we have
\begin{align*}
\alpha(a+b) & =\gamma(a+b+c)-\gamma(c) =\gamma(a+b+c)+\gamma(c)-2\gamma(c) \\&=\gamma(a+c+b+c)-2\gamma(c) =(\gamma(a+c)-\gamma(c))+(\gamma(b+c)-\gamma(c))\\ &=\alpha(a)+\alpha(b)\,,
\end{align*}
so that $\alpha$ is a homomorphism. It is clear that $\alpha(N)\subseteq \mathrm{G}(M)^+$. 

By the universal property of the Grothendieck group, there exists a group homomorphism $\alpha'\colon \mathrm{G}(N)\to \mathrm{G}(M)$ such that $\alpha'(\eta(a)-\eta(b))=\alpha(a)-\alpha(b) $. Note that $\alpha'$ is injective. Indeed, if $\alpha(a)-\alpha(b)=0$, then $\gamma(a+c)=\gamma(b+c)$ and so $a+c+c'=b+c+c'$ for some $c'\in M$, and thus $\eta(a)=\eta(b)$.

 If $\eta(a)-\eta(b)	\in \mathrm{G}(N)^+$ with $b\leq a$ in $N$, then $b+c\leq a+c$ in $M$ and so $\gamma(b+c)-\gamma(a+c)\in \mathrm{G}(M)^+$. Therefore 
\begin{align*}
\alpha'(\eta(a)-\eta(b)) & =\alpha(a)-\alpha(b)\\ & =\gamma(a+c)-\gamma(c)-(\gamma(b+c)-\gamma(c))=\gamma(a+c)-\gamma(b+c)\,,
\end{align*}
which shows that $\alpha' (\mathrm{G}(N)^+)\subseteq \mathrm{G}(M)^+$.

Observe that, if $a\in M\subseteq N$, then $\alpha(a)=\gamma(a+c)-\gamma(c)=\gamma(a)$. This implies that any element in $\mathrm{G}(M)$ has the form 
\[
\gamma(a)-\gamma(b)=\gamma(a+c)-\gamma(b+c)=\alpha'(\eta(a+c)-\eta(b+c))\,
\]
 and so $\alpha'$ is surjective and $\alpha'(\mathrm{G}(N)^+)=\alpha(\mathrm{G}(M)^+)$.

\end{proof}

Given semigroups $N$ and $M$ as above, we will say that $M$ \emph{absorbs} $N$.

For a C$^*$-algebra $A$, let us denote by $\mathrm{W}(A)_+$ the classes of those elements in $M_{\infty}(A)_+$ which are not Cuntz equivalent to a projection. Note that, if $A$ has stable rank one, then $\mathrm{W}(A)_+$ absorbs $\mathrm{W}(A)$ (see, e.g. \cite{APT}). If now $X$ is a finite dimensional compact metric space, define
\[
\mathrm{Lsc}_{\mathrm{b}}(X,\mathrm{W}(A)_+)=\{f\in \mathrm{Lsc}_{\mathrm{b}}(X,\mathrm{W}(A))\mid f(X)\subseteq \mathrm{W}(A)_+\}\,.
\]
It is clear that $\mathrm{Lsc}_{\mathrm{b}}(X,\mathrm{W}(A)_+)$ absorbs $\mathrm{Lsc}_{\mathrm{b}}(X,\mathrm{W}(A))$.
\begin{theorem}
\label{cor:tiku}
Let $X$ be a finite dimensional, compact metric space, and let $A$ be a unital, simple, non-type I, ASH algebra with slow dimension growth. Then $\mathrm{K}_0^*(\mathrm{C}(X,A))$ is an interpolation group.
\end{theorem}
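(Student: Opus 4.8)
The plan is to work entirely at the level of Grothendieck groups, exploiting that passing to an absorbing subsemigroup does not change the group (Proposition \ref{Grot}). This is forced upon us: for a general finite dimensional $X$ we do \emph{not} have stable rank one of $\mathrm{C}(X,A)$ available, so we cannot pass between $\mathrm{W}(\mathrm{C}(X,A))$ and its stabilization via hereditariness as in the proof of Theorem \ref{interpolacio}. The absorption trick sidesteps this entirely.

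First I would record that $\mathrm{W}(A)$ satisfies the hypotheses of Corollary \ref{interC}. Being separable, $A$ has a countably based Cuntz semigroup; having slow dimension growth, $A$ is $\mathcal{Z}$-stable, so $\Cu(A)$ is an interpolation semigroup by \cite[Proposition 5.4]{Tiku}; and having stable rank one, $\mathrm{W}(A)$ is hereditary in $\Cu(A)$, i.e.\ it lies in $\mathcal{C}$, whence Lemma \ref{interpolation} transfers interpolation back to $\mathrm{W}(A)$. Corollary \ref{interC} then yields that $\mathrm{Lsc}_{\mathrm{b}}(X,\mathrm{W}(A))$ is an interpolation semigroup, and by \cite[Lemma 4.2]{Per} its Grothendieck group $\mathrm{G}(\mathrm{Lsc}_{\mathrm{b}}(X,\mathrm{W}(A)))$ is an interpolation group.

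Next I would identify the target group with this one. Using the description of $\mathrm{W}(\mathrm{C}(X,A))$ from \cite{Tiku}---valid precisely because $A$ is unital, simple, non-type I, ASH with slow dimension growth---the subsemigroup $P$ of classes that are pointwise purely positive is order-isomorphic to $\mathrm{Lsc}_{\mathrm{b}}(X,\mathrm{W}(A)_+)$. Since each fibre $A$ has stable rank one, $\mathrm{W}(A)_+$ absorbs $\mathrm{W}(A)$ fibrewise, so $P$ absorbs $\mathrm{W}(\mathrm{C}(X,A))$ and $\mathrm{Lsc}_{\mathrm{b}}(X,\mathrm{W}(A)_+)$ absorbs $\mathrm{Lsc}_{\mathrm{b}}(X,\mathrm{W}(A))$ (the latter being noted just before the statement). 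Two applications of Proposition \ref{Grot} then give ordered-group isomorphisms
$$\mathrm{K}_0^*(\mathrm{C}(X,A)) = \mathrm{G}(\mathrm{W}(\mathrm{C}(X,A))) \cong \mathrm{G}(P) \cong \mathrm{G}(\mathrm{Lsc}_{\mathrm{b}}(X,\mathrm{W}(A)_+)) \cong \mathrm{G}(\mathrm{Lsc}_{\mathrm{b}}(X,\mathrm{W}(A))),$$
and the right-hand group was just shown to have interpolation.

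The crux, and the step I expect to be the main obstacle, is the middle identification: one must invoke Tikuisis's description carefully to see that the pointwise purely positive part of $\mathrm{W}(\mathrm{C}(X,A))$ really is $\mathrm{Lsc}_{\mathrm{b}}(X,\mathrm{W}(A)_+)$, and that this part absorbs the whole semigroup. The non-type I hypothesis is essential here: for a type I (matricial) fibre the purely positive part would be empty and the absorption argument would collapse, which is exactly why the hypotheses of \cite{Tiku} are imposed. Everything else is bookkeeping with Proposition \ref{Grot} and Corollary \ref{interC}.
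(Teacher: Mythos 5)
Your proposal is correct and takes essentially the same route as the paper: Tikuisis's description is used to realize $\mathrm{Lsc}_{\mathrm{b}}(X,\mathrm{W}(A)_+)$ as an absorbing subsemigroup of $\mathrm{W}(\mathrm{C}(X,A))$, two applications of Proposition \ref{Grot} yield $\mathrm{K}_0^*(\mathrm{C}(X,A))\cong \mathrm{G}(\mathrm{Lsc}_{\mathrm{b}}(X,\mathrm{W}(A)))$, and interpolation then follows from Corollary \ref{interC} together with \cite[Lemma 4.2]{Per}. The only cosmetic difference is in the bookkeeping for why $\mathrm{W}(A)$ has interpolation: you pass through $\mathcal Z$-stability, \cite[Proposition 5.4]{Tiku} for $\Cu(A)$, and Lemma \ref{interpolation}, whereas the paper invokes \cite[Proposition 5.4]{Tiku} for $\mathrm{W}(A)$ directly.
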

\begin{proof}
A description of $\mathrm{W}(\mathrm{C}(X,A))$ for the algebras in the hypothesis is given in \cite[Corollary 7.1]{Tiku} by means of pairs $(f,P)$ 
consisting of a lower semicontinuous function $f\in\mathrm{Lsc}_{\mathrm{b}}(X,\mathrm{W}(A))$, and a collection $P$, indexed over $[p]\in V(A)$,
of projection valued functions in $\mathrm{C}(f^{-1}([p]),A\otimes \mathcal K)$ modulo a certain equivalence relation.
If $f\in \mathrm{Lsc}_{\mathrm{b}}(X,\mathrm{W}(A)_+)$, then clearly $f^{-1}([p])=\emptyset$ for all $[p]\in V(A)$ thus notably simplifying the description of these elements. Namely,
there is only one pair of the form $(f,P_0)$, where $P_0$ does not depend on $f\in\mathrm{Lsc}_{\mathrm b}(X,\mathrm{W}(A)_+)$. In particular, the assignment $f\mapsto (f,P_0)$ defines an order-embedding $\mathrm{Lsc}_{\mathrm{b}}(X,\mathrm{W}(A)_+)\to \mathrm{W}(\mathrm{C}(X,A))$ whose image absorbs $\mathrm{W}(\mathrm{C}(X,A))$. As we also have that $\mathrm{Lsc}_{\mathrm{b}}(X,\mathrm{W}(A)_+)$ absorbs $\mathrm{Lsc}_{\mathrm{b}}(X,\mathrm{W}(A))$, we have by Proposition \ref{Grot} that 
\[
\mathrm{K}_0^*(\mathrm{C}(X,A))\cong \mathrm{G}(\mathrm{Lsc}_{\mathrm{b}}(X,\mathrm{W}(A)_+))\cong \mathrm{G}(\mathrm{Lsc}_{\mathrm{b}}(X,\mathrm{W}(A)))\,,
\]
as partially ordered abelian groups. Since $\mathrm{W}(A)$ is an interpolation semigroup (\cite[Proposition 5.4]{Tiku}) we conclude, using Corollary \ref{interC} and \cite[Lemma 4.2]{Per}, that $\mathrm{K}_0^*(\mathrm{C}(X,A))$  is an interpolation group.
\end{proof}

We close this section by analysing continuous fields over one-dimensional spaces. As in the previous results, we will need a representation of the Cuntz semigroup of such algebras, which in this case can be done in terms of continuous sections over a topological space. We recall the main definitions below (see \cite{ABP2} for a fuller account).

Let $X$ be a compact Hausdorff space, and denote by $\mathcal V_X$ the category of closed sets with non-empty interior, with morphisms given by inclusion. A \emph{$\Cu$-presheaf} over $X$ is a contravariant functor $\mathcal S\colon \mathcal V_X\to\Cu$. We denote by $S=\mathcal S(X)$ and by $\pi_V^W\colon \mathcal S(W)\to \mathcal S(V)$ the restriction maps (where $V\subseteq W$). A $\Cu$-presheaf is a \emph{$\Cu$-sheaf} if, whenever $V$ and $V'\in\mathcal V_X$ are such that $V\cap V'\in\mathcal{V}_X$, the map
\[\pi_{V}^{V\cup V'}\times\pi_{V'}^{V\cup V'}\colon 
\mathcal{S}(V\cup V')\to \{(f,g)\in \mathcal{S}(V)\times\mathcal{S}(V')\mid
\pi_{V\cap V'}^V(f)=\pi_{V\cap V}^{V'}(g)\},
\]
is bijective. We say that a $\Cu$-presheaf (respectively a $\Cu$-sheaf) is \emph{continuous} if for any decreasing sequence of
closed subsets $(V_i)_{i=1}^\infty$ with $\cap_{i=1}^\infty V_i=V\in\mathcal V_X$, the limit $\lim \mathcal{S}(V_i)$ is
isomorphic to $\mathcal{S}(V)$. We will assume from now on that all $\Cu$-presheaves and all $\Cu$-sheaves are continuous.

Given a $\Cu$-presheaf $\mathcal{S}$ over $X$, and $x\in X$, we define the \emph{fiber} of $\mathcal{S}$ at $x$ as  $S_x:=\lim_{x\in \mathring{V}}\mathcal{S}(V)$.  We will denote by $\pi_x\colon S\to S_x$ the natural maps, and also by $\pi_U$ the maps $\pi_U^X$. To ease the notation in the sequel, we shall refer to $\Cu$-presheaves or $\Cu$-sheaves as presheaves or sheaves, respectively. A (pre)sheaf is \emph{surjective} if all the restriction maps are surjective.

For a presheaf $\mathcal S$, we define $F_S=\sqcup_{x\in X} S_x$ and $\pi\colon F_S\to X$ by $\pi(s)=x$ if $s\in S_x$. A \emph{section} of $F_S$ is a map $f\colon X\to F_S$ such that $\pi f=\mathrm{id}_X$. Elements of $S$ induce sections as follows: given $s\in S$, we denote by $\hat s(x)=\pi_x(s)$, and it is clear that $\hat s$ defines a section of $F_S$. A section $f\colon X\to F_S$ is \emph{continuous} if the following holds: 
\begin{itemize}
\item[-] For all $x\in X$ and $a_x\in S_x$ such that $a_x\ll f(x)$,
there exist a closed set $V$ with $x\in\mathring{V}$
and $s\in S$ such that $\hat{s}(x)\gg a_{x}$ and
$\hat{s}(y)\ll f(y)$ for all $y\in V$.
\end{itemize}
We denote by $\Gamma(X,F_S)$ the set of all continuous sections, which becomes a partially ordered abelian semigroup when equipped with the pointwise order and addition. As shown in \cite[Theorem 3.10]{ABP2}, if $X$ is one dimensional and $\mathcal S\colon \mathcal V_X\to \Cu$ is a surjective sheaf over $X$ with $S$ countably based, then $\Gamma(X,F_S)$ belongs to the category $\Cu$. 

Our main interest is in the $\Cu$-sheaf determined by continuous fields. If $A$ is a continuous field over $X$ whose fibers have no $\mathrm{K}_1$-obstructions, and $\Cu_A$ denotes the sheaf given by $\Cu_A(V)=\Cu(A(V))$, then  the natural map $\Cu(A)\to\Gamma(X,F_{\Cu(A)})$ defined by $s\mapsto\hat s$ is an order-isomorphism in $\Cu$ (by \cite[Theorem 3.12]{ABP2}). 

\begin{proposition}
\label{prop:interpsheaf} Let $X$ be a one dimensional, compact metric space, and let $\mathcal S\colon\mathcal V_X\to \Cu$ be a surjective sheaf such that $S_x$ is an interpolation semigroup for each $x\in X$. Then $\Gamma(X,F_S)$ is also an interpolation semigroup.
\end{proposition}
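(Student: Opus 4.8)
The plan is to verify, for $M=\Gamma(X,F_S)$, the hypothesis of Lemma~\ref{lemaprevi}; note that $M\in\Cu$ by \cite[Theorem 3.10]{ABP2}. Thus I assume $f_i\ll f_i'\ll g_j$ in $\Gamma(X,F_S)$ for $i,j=1,2$ and aim to produce $h\in\Gamma(X,F_S)$ with $f_i\ll h\ll g_j$ for all $i,j$. Since each fibre evaluation $\Gamma(X,F_S)\to S_x$, $f\mapsto f(x)$, is a $\Cu$-morphism it preserves $\ll$, so $f_i(x)\ll f_i'(x)\ll g_j(x)$ in $S_x$ for every $x$. The overall strategy mirrors the proof of Proposition~\ref{inter}, replacing the pointwise values of lower semicontinuous functions by the fibres $S_x$, and the elements of a fixed $M$ by elements of $S$ together with their induced sections.

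First I would establish a local interpolation statement: for each $x\in X$ there is a closed set $V_x$ with $x\in\mathring V_x$ and an element $s_x\in S$ such that $f_i(y)\ll\widehat{s_x}(y)\ll g_j(y)$ for all $y\in V_x$ and all $i,j$. The interpolant in the fibre is obtained from the fact that $S_x$ is an interpolation semigroup in $\Cu$: the chain $f_i(x)\ll f_i'(x)\ll g_j(x)$ provides, with $f_i'(x)$ as middle term, both the bottom gap $f_i(x)\ll f_i'(x)$ and (after replacing $g_j(x)$ by a suitable predecessor in a rapidly increasing sequence converging to it) a top gap, which is exactly what is needed to interpolate for the relation $\ll$ rather than merely $\leq$; this yields $d_x\in S_x$ with $f_i(x)\ll d_x\ll g_j(x)$. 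I would then promote $d_x$ to a global datum: surjectivity of $\mathcal S$ realizes a candidate in $S$, the continuity of the sections $g_1,g_2$ applied to $d_x\ll g_j(x)$ propagates the upper estimate $\widehat{s_x}(y)\ll g_j(y)$ to a neighbourhood of $x$, and the lower estimate $f_i(y)\ll\widehat{s_x}(y)$ persists near $x$ because $f_i(x)\ll d_x=\widehat{s_x}(x)$. Intersecting the finitely many resulting neighbourhoods gives $V_x$.

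The decisive step is to glue. By compactness, finitely many interiors $\mathring V_{x_1},\dots,\mathring V_{x_k}$ cover $X$, and I would patch the sections $\widehat{s_{x_1}},\dots,\widehat{s_{x_k}}$ into one continuous section $h$ with $f_i\ll h\ll g_j$ throughout. This is where $\dim X=1$ is essential and where the argument is hardest: the gluing must preserve the two-sided compact containments simultaneously across overlaps while keeping $h$ inside $\Gamma(X,F_S)$. I would carry it out through the sheaf axiom together with an interpolation on each overlap to merge two locally valid interpolants into one, following the patching used to prove $\Gamma(X,F_S)\in\Cu$ in \cite[Theorem 3.10]{ABP2} and the finite-dimensional patching of \cite[Proposition 5.13]{APS}; one-dimensionality keeps the combinatorics of the overlaps tractable (the nerve of the cover being, in effect, a graph). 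I expect this gluing to be the main obstacle. Once $h$ is constructed, Lemma~\ref{lemaprevi} gives that $\Gamma(X,F_S)$ is an interpolation semigroup.
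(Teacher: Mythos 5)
Your overall skeleton---verify the hypothesis of Lemma~\ref{lemaprevi} for $\Gamma(X,F_S)$ by fibrewise interpolation, promote the interpolant to a neighbourhood of each point, and finish by compactness plus the patching machinery of \cite{ABP2} (where one-dimensionality enters)---is exactly the paper's. But your local step contains a genuine gap: you justify the lower estimate by asserting that $f_i(y)\ll\widehat{s_x}(y)$ holds for all $y$ near $x$ ``because $f_i(x)\ll d_x=\widehat{s_x}(x)$''. A compact containment between values at a single point does not propagate to a neighbourhood, and in the present setting this inference is false. Consider the trivial field $\mathrm{C}([0,1])$, whose associated sheaf has every fibre equal to $\overline{\mathbb{N}}$ (an interpolation semigroup) and whose continuous sections form $\mathrm{Lsc}([0,1],\overline{\mathbb{N}})$: the function $f$ equal to $3$ on $\bigcup_{n}\left(\tfrac{1}{2n+1},\tfrac{1}{2n}\right)$ and equal to $1$ elsewhere is a continuous section, and at $x=0$ it satisfies $f(0)=1\ll 2$, yet every neighbourhood of $0$ contains points where $f=3>2$. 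So knowing $f_i(x)\ll\widehat{s_x}(x)$ gives you no control off the point $x$, and the sets $V_x$ you need may fail to exist.

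What rescues the argument---and this is the key technical idea of the paper's proof that your proposal misses---is that the relation to exploit is the global one, $f_i\ll f_i'$ in $\Gamma(X,F_S)$, not its pointwise shadow. By \cite[Proposition 3.2]{ABP2}, that global relation is witnessed at each $x$ by elements $a_{i,x}\in S_x$ with $f_i(x)\ll a_{i,x}\ll f_i'(x)$ having the crucial property that any section $\hat{s}$ with $\hat{s}(x)\gg a_{i,x}$ satisfies $f_i(y)\leq\hat{s}(y)$ on an entire neighbourhood of $x$. The fibrewise interpolation must therefore be performed above sections dominating these witnesses $a_{i,x}$ at $x$, not merely above the value $f_i(x)$; then the lower estimate persists automatically, while the upper persistence near $x$ is obtained by producing a pair $c_x\ll c_x'$ in $S$ with $\hat{c}_x'(x)\ll g_j(x)$ and invoking \cite[Corollary 3.4]{ABP2} (the lift from the fibre $S_x$ into $S$ uses \cite[Lemma 3.3]{ABP2}, not just surjectivity of the sheaf, since $S_x$ is only an inductive limit of the $\mathcal{S}(V)$). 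With the local step repaired in this way, your compactness-plus-patching conclusion is precisely how the paper finishes, via \cite[Proposition 3.8]{ABP2}.
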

\begin{proof}
We apply Lemma \ref{lemaprevi}, and so suppose that $f_i'\ll f_i\ll g_j$, for $i,j=1,2$. Given $x\in X$, there are elements $a_{i,x}\in S_x$ such that $f_i'(x)\ll a_{i,x}\ll f_i(x)$ for each $i$, and that satisfy condition (ii) in \cite[Proposition 3.2]{ABP2}. As $a_{i,x}\ll f_i(x)$, there are by continuity a closed neighborhood $V_x$ of $x$ with $x\in\mathring{V}_x$ and $s_i\ll s'_i\ll s''_i\in S$ (depending on $x$) such that $a_{i,x}\ll \hat{s}_i$ and $\hat{s}_i''(y)\ll f_i(y)$ for all $y\in V_x$. Now apply \cite[Proposition 3.2 (ii)]{ABP2} so there is a closed neighborhood $W_x\subseteq V_x$ (whose interior contains $x$) such that $f_i'(y)\leq \hat{s}_i(y)\ll \hat{s}_i'(y)\ll \hat{s}_i''(y)\ll f_i(y)$ for all $y\in W_x$.

At $x$, we have that $\hat{s}_i'(x)\ll g_j(x)$, so by the interpolation property assumed on $S_x$ and \cite[Lemma 3.3]{ABP2}, there are elements $c_x\ll c_x'$ in $S$ such that 
\[f_i'(x)\leq \hat{s}_i(x)\ll \hat{s}_i'(x)\ll \hat{c}_x(x)\ll \hat{c}_x'(x)\ll g_j(x)\,.
\]
 Since $c_x\ll c_x'$, we may apply \cite[Corollary 3.4]{ABP2} to find a closed subset $W'_x\subseteq\mathring{W}_x$ such that $\pi_{W'_x}(c_x)\ll {g_j}_{|W'_x}$ for each $j$. Since $s_i\ll s'_i$ for each $i$, another application of \cite[Corollary 3.4]{ABP2} yields a closed subset $W_x''\subseteq\mathring{W}_x$ such that $\pi_{W''_x}(s_i)\ll \pi_{|W''_x}(c_x)$ for each $i$.  We therefore conclude that $f_i'(y)\ll \hat{c}_x(y)\ll g_j(y)$ for all $y\in W_x'\cap W''_x$ and for all $i,j\in\{1,2\}$.
By compactness we obtain a finite cover $W_1,\ldots, W_n$ of $X$  and elements $c_1,\ldots,c_n\in S$ such that $f_i'(y)\ll\hat{c_i}(y)\ll g_j(y)$ for all $y\in W_i$. We now run the argument in \cite[Proposition 3.8]{ABP2} to patch the sections $\hat{c}_i$ into a continuous section $h\in\Gamma(X, F_S)$ such that $f_i\ll h\ll g_j$.
\end{proof}

\begin{theorem}\label{th:ctsfields}
Let $X$ be a one dimensional, compact metric space. Let $A$ be a continuous field over $X$ such that, for all $x\in X$, $A_x$ has stable rank one, trivial $K_1$, and is either of real rank zero, or simple and $\mathcal Z$-stable. Then $\mathrm{K}_0^*(A)$ is an interpolation group. 
\end{theorem}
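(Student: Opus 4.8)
The plan is to reduce interpolation of $\mathrm{K}_0^*(A)=\mathrm{G}(\mathrm{W}(A))$ to interpolation of a semigroup of continuous sections, where Proposition \ref{prop:interpsheaf} does the real work, and then to pass back down to $\mathrm{W}(A)$ and to its Grothendieck group.

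First I would verify that each fiber $A_x$ has \emph{no $\mathrm{K}_1$-obstructions}. In the real rank zero case this follows from $\mathrm{sr}(A_x)=1$, $\mathrm{K}_1(A_x)=0$ and \cite[Lemma 2.4]{Li}; in the simple, $\mathcal{Z}$-stable case it follows from simplicity together with $\mathrm{K}_1(A_x)=0$, as recalled at the start of Section \ref{sr}. Theorem \ref{StableRankOfCtsFields} then gives $\mathrm{sr}(A)=1$, so $\mathrm{W}(A)$ is hereditary; that is, $\mathrm{W}(A)\in\mathcal{C}$ with completion $\Cu(A)$.

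Next I would invoke the sheaf representation of \cite{ABP2}: since the fibers have no $\mathrm{K}_1$-obstructions, \cite[Theorem 3.12]{ABP2} provides an order-isomorphism $\Cu(A)\cong\Gamma(X,F_{\Cu_A})$ in $\Cu$, where $\Cu_A$ is the surjective sheaf $V\mapsto\Cu(A(V))$ with fibers $(\Cu_A)_x\cong\Cu(A_x)$. The substantive input is then to check that each fiber $\Cu(A_x)$ is an interpolation semigroup. For real rank zero fibers, $\mathrm{W}(A_x)$ satisfies Riesz interpolation by \cite[Theorem 2.13]{Per}, and since $\mathrm{sr}(A_x)=1$ makes $\mathrm{W}(A_x)$ hereditary, Lemma \ref{interpolation} transfers this property to the completion $\Cu(A_x)$. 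For simple, $\mathcal{Z}$-stable fibers, $\Cu(A_x)$ is an interpolation semigroup directly by \cite[Proposition 5.4]{Tiku}. In either case all fibers are interpolation semigroups, so Proposition \ref{prop:interpsheaf} applies to the surjective sheaf $\Cu_A$ and shows that $\Gamma(X,F_{\Cu_A})$, and hence $\Cu(A)$, is an interpolation semigroup.

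Finally I would descend and pass to the Grothendieck group: as $\mathrm{W}(A)\in\mathcal{C}$ has completion $\Cu(A)$, Lemma \ref{interpolation} carries interpolation from $\Cu(A)$ back to $\mathrm{W}(A)$, and \cite[Lemma 4.2]{Per} then yields that $\mathrm{K}_0^*(A)=\mathrm{G}(\mathrm{W}(A))$ is an interpolation group. The bulk of the argument is an assembly of existing machinery; the step requiring the most care is confirming the hypotheses of the sheaf representation and of Proposition \ref{prop:interpsheaf}, namely the surjectivity of $\Cu_A$ and that every fiber is genuinely an interpolation semigroup uniformly across both families of fibers, since this is precisely where triviality of $\mathrm{K}_1$ and the structure of the fibers enter.
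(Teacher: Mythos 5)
Your proposal is correct and follows essentially the same route as the paper: reduce to $\Cu(A)\cong\Gamma(X,F_{\Cu_A})$ via \cite[Theorem 3.12]{ABP2}, apply Proposition \ref{prop:interpsheaf}, use Theorem \ref{StableRankOfCtsFields} to get $\mathrm{sr}(A)=1$ and hence hereditariness of $\mathrm{W}(A)$, then descend via Lemma \ref{interpolation} and pass to the Grothendieck group. In fact you spell out two checks the paper leaves implicit (that the fibers have no $\mathrm{K}_1$-obstructions, and that each $\Cu(A_x)$ is an interpolation semigroup in both the real rank zero and the simple $\mathcal Z$-stable cases, exactly as in the proof of Theorem \ref{interpolacio}(i)), which is a welcome clarification.
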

\begin{proof}
As mentioned above, by \cite[Theorem 3.12]{ABP2}  we have an order-isomorphism between $\Cu(A)$ and $\Gamma(X,F_{\Cu(A)})$, and the latter is an interpolation semigroup by Proposition \ref{prop:interpsheaf}. Furthermore, $A$ has stable rank one by Theorem \ref{StableRankOfCtsFields}, and so $\mathrm{W}(A)$ is hereditary. Hence, $\mathrm{W}(A)$ will also be an interpolation semigroup (Lemma \ref{interpolation}) and $\mathrm{K}_0^*(A)$ is an interpolation group.
\end{proof}

\section{Structure of dimension functions}

In this section we apply the above results to confirm the conjectures of Blackadar and Handelman for certain continuous fields of C$^*$-algebras.

If $A$ is unital, the maps $d\colon \mathrm{W}(A)\to \mathbb{R}^+$ that respect addition, order, and satisfy $d(\la 1_A\ra)=1$ are called \emph{dimension functions}, and we denote the set of them by $\mathrm{DF}(A)$. In other words, $\mathrm{DF}(A)$ equals the set of states $\mathrm{St}(\mathrm{W}(A),\la 1_A\ra)$ on the semigroup $\mathrm{W}(A)$, which clearly agrees with 
$\mathrm{St}(\mathrm{K}_0^*(A),\mathrm{K}_0^*(A)^+,[1_A])$.

\begin{theorem}\label{cuntzse}
Let $X$ be a finite dimensional, compact metric space, and let  $A$ be a separable, unital C$^*$-algebra. Then $\mathrm{DF}(A)$ is a Choquet simplex in the following cases:
\begin{enumerate}[{\rm (i)}]
\item $\dim X\leq 1$ and $A$ is a continuous field such that, for all $x\in X$, $A_x$ has stable rank one, trivial $\mathrm{K}_1$ and is either of real rank zero or else simple and $\mathcal Z$-stable.
\item $X$ is an arc-like space and $A=\mathrm{C}(X,B)$ where $B$ is simple, with real rank zero, and has finite radius of comparison, or else $B$ is simple and $\mathcal Z$-stable.
\item $\dim X\leq 2$, $\check{\mathrm{H}}^2(X,\mathbb Z)=0$, and $A=\mathrm{C}(X,B)$ with $B$ an AF-algebra.
\item $A=\mathrm{C}(X,B)$, where $B$ is a non-type I, simple, ASH algebra with slow dimension growth.
\end{enumerate}
\end{theorem}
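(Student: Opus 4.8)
The plan is to reduce Theorem \ref{cuntzse} to the interpolation results already established for $\mathrm{K}_0^*(A)$, via the Goodearl characterization of Choquet simplices mentioned in the Introduction. Recall that $\mathrm{DF}(A)=\mathrm{St}(\mathrm{K}_0^*(A),\mathrm{K}_0^*(A)^+,[1_A])$, the state space of the ordered abelian group $\mathrm{K}_0^*(A)$ with distinguished order-unit $[1_A]$. The key abstract fact (from \cite{Goo}) is that the state space of an ordered abelian group with order-unit is a Choquet simplex precisely when the group satisfies the Riesz interpolation property. Thus the entire statement will follow once I verify that each of the four cases falls under one of the interpolation theorems of Section \ref{sec:lsc}, together with the observation that $A$ is unital so that $[1_A]$ is a genuine order-unit.

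First I would note the standing hypotheses: $X$ is a finite-dimensional compact metric space and $A$ is separable and unital. In each case I must confirm that the hypotheses of the relevant earlier theorem are met and that the resulting $\mathrm{K}_0^*(A)$ is an interpolation group. Concretely: case (i) is exactly the hypothesis of Theorem \ref{th:ctsfields}, so $\mathrm{K}_0^*(A)$ has interpolation there; case (ii) is case (ii) of Theorem \ref{interpolacio} with $A=\mathrm{C}(X,B)$; case (iii) is case (iii) of Theorem \ref{interpolacio}; and case (iv) is precisely Theorem \ref{cor:tiku}. In every instance the cited result yields that $\mathrm{K}_0^*(A)$ is an interpolation group.

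Having interpolation of $\mathrm{K}_0^*(A)$ in hand, I would invoke the Goodearl result to conclude that the state space $\mathrm{St}(\mathrm{K}_0^*(A),\mathrm{K}_0^*(A)^+,[1_A])$ is a Choquet simplex. The identification $\mathrm{DF}(A)=\mathrm{St}(\mathrm{K}_0^*(A),[1_A])$ is the one spelled out immediately before the theorem statement (the set of dimension functions coincides with the states on $\mathrm{W}(A)$, which in turn agree with the states on its Grothendieck group). Since unitality of $A$ guarantees that $[1_A]$ is an order-unit for $\mathrm{K}_0^*(A)$, the state space is compact and convex in the weak-$*$ topology, and the interpolation property upgrades this to a Choquet simplex. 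This completes all four cases uniformly.

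The work here is genuinely light, since the heavy lifting has been done in the preceding sections; the main point to be careful about is bookkeeping, namely checking that the hypotheses on $X$, $A$, and the fibers in each numbered case of Theorem \ref{cuntzse} line up verbatim with the hypotheses of the theorem being cited (in particular the stable-rank-one, trivial-$\mathrm{K}_1$, real-rank-zero versus $\mathcal Z$-stable dichotomies in cases (i) and (ii)). The only conceptual ingredient beyond the earlier interpolation theorems is the passage from Riesz interpolation of the ordered group to the Choquet simplex structure of its state space, which is the Goodearl criterion. Thus I expect no serious obstacle: the proof is essentially a citation of the appropriate interpolation theorem in each case followed by one application of \cite{Goo}.
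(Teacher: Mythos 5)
Your proposal is correct and follows exactly the paper's own argument: the paper's proof consists precisely of citing the Section~\ref{sec:lsc} interpolation results (Theorem~\ref{th:ctsfields} for case (i), Theorem~\ref{interpolacio} for cases (ii)--(iii), and Theorem~\ref{cor:tiku} for case (iv)) and then applying \cite[Theorem 10.17]{Goo} together with the identification $\mathrm{DF}(A)=\mathrm{St}(\mathrm{K}_0^*(A),[1_A])$ stated just before the theorem. Your case-by-case bookkeeping matches the intended citations, so there is nothing to add.
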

\begin{proof}
By the results of Section \ref{sec:lsc}, $\mathrm{K}_0^*(A)$ is an interpolation group in all  the cases. Then, by \cite[Theorem 10.17]{Goo}, $\mathrm{DF}(A)$ is a Choquet simplex.
\end{proof}

\begin{proposition}\label{lsc}
Let $X$ and $Y$ be compact Hausdorff spaces. Put
\[
\mathrm{G}_{\mathrm{b}}(X,Y)=\{f\colon X\times Y\to \mathbb R\mid f=g-h\text{ with }g,h\in
\mathrm{Lsc}_{\mathrm{b}}(X\times Y)^{++}\}\,. 
\]
Then
\begin{enumerate}[{\rm (i)}]
\item $\mathrm{G}_{\mathrm{b}}(X,Y)$, equipped with the pointwise order, is a partially ordered abelian group.
 \item For any $f\in
\mathrm{\mathrm{Lsc}}_{\mathrm{b}}(X,\mathrm{Lsc}_{\mathrm{b}}(Y)^{++})$, the map $\tilde{f}\colon X\times Y\to \mathbb{R}^+$, defined by
$\tilde{f}(x,y)=f(x)(y)$, is lower semicontinuous.
\item The map $\beta\colon \mathrm{G}(\mathrm{Lsc}_{\mathrm{b}}(X,\mathrm{Lsc}_{\mathrm{b}}(Y)^{++}))\to
\mathrm{G}_{\mathrm{b}}(X,Y)$ defined by 
\[
\beta([f]-[g])=\tilde{f}-\tilde{g}
\]
is an order-embedding.
\end{enumerate}
\end{proposition}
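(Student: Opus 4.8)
The plan is to prove the three parts in order, as parts (i) and (ii) are essentially routine verifications that set up the genuinely substantive content of part (iii).

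For part (i), I would first observe that $\mathrm{Lsc}_{\mathrm{b}}(X\times Y)^{++}$ is closed under addition and that any bounded lower semicontinuous function on the compact space $X\times Y$ admits a constant strictly positive function below it, so the formal differences $g-h$ form a group under pointwise operations; the positive cone consists of those differences that are pointwise nonnegative. One must check antisymmetry, i.e. that $f \geq 0$ and $-f \geq 0$ force $f = 0$, which is immediate for the pointwise order. For part (ii), given $f \in \mathrm{Lsc}_{\mathrm{b}}(X, \mathrm{Lsc}_{\mathrm{b}}(Y)^{++})$, I would verify lower semicontinuity of $\tilde f$ at a point $(x_0, y_0)$ directly from the definitions: fix $\lambda < \tilde f(x_0,y_0) = f(x_0)(y_0)$, use lower semicontinuity of the function $f(x_0) \in \mathrm{Lsc}_{\mathrm{b}}(Y)$ to get a neighborhood of $y_0$ where $f(x_0)(\cdot) > \lambda$, and then use lower semicontinuity of $f$ as a map into $\mathrm{Lsc}_{\mathrm{b}}(Y)$ (in the sense of the Definition preceding, where $\mathrm{Lsc}_{\mathrm{b}}(Y)$ is regarded as an object of $\Cu$) to control how the values $f(x)$ dominate a fixed simple function near $x_0$. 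The only delicate point is matching the semicontinuity notion for $\Cu$-valued maps (phrased via the relation $\ll$) to the real-valued statement, which is handled by approximating from below by functions taking finitely many values.

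For part (iii), the map $\beta$ is well-defined and a group homomorphism once (ii) guarantees $\tilde f, \tilde g$ land in $\mathrm{Lsc}_{\mathrm{b}}(X\times Y)^{++}$, so $\tilde f - \tilde g \in \mathrm{G}_{\mathrm{b}}(X,Y)$; well-definedness on Grothendieck classes follows because $f_1 + g_2 + e = f_2 + g_1 + e$ in $\mathrm{Lsc}_{\mathrm{b}}(X,\mathrm{Lsc}_{\mathrm{b}}(Y)^{++})$ passes through the tilde construction, which is clearly additive in the sense $\widetilde{f+g} = \tilde f + \tilde g$. To show $\beta$ is an order-embedding I must prove both that it preserves and that it reflects the order. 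Preservation is direct: $[f] - [g] \geq 0$ in the Grothendieck group means $f + e \geq g + e$ pointwise in the semigroup for some $e$, which transfers to $\tilde f \geq \tilde g$. The substantive direction is reflection of order, i.e. that $\tilde f \geq \tilde g$ pointwise on $X \times Y$ forces $[f] - [g] \geq 0$; since the order on $\mathrm{Lsc}_{\mathrm{b}}(X, \mathrm{Lsc}_{\mathrm{b}}(Y)^{++})$ is itself pointwise, this reduces to observing that $\tilde f(x,y) \geq \tilde g(x,y)$ for all $(x,y)$ says exactly $f(x)(y) \geq g(x)(y)$ for all $x,y$, hence $f \geq g$ in the outer semigroup.

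The main obstacle I anticipate is injectivity of $\beta$ on formal differences, equivalently the cancellation needed to recover $[f] = [g]$ from $\tilde f = \tilde g$. Here the point is that the tilde assignment $f \mapsto \tilde f$ is itself injective from $\mathrm{Lsc}_{\mathrm{b}}(X, \mathrm{Lsc}_{\mathrm{b}}(Y)^{++})$ into $\mathrm{Lsc}_{\mathrm{b}}(X\times Y)^{++}$: if $\tilde f = \tilde{f'}$ then $f(x)(y) = f'(x)(y)$ for all $(x,y)$, so $f(x) = f'(x)$ for each $x$, giving $f = f'$. Combined with the fact that $\mathrm{Lsc}_{\mathrm{b}}(Y)^{++}$, and hence $\mathrm{Lsc}_{\mathrm{b}}(X, \mathrm{Lsc}_{\mathrm{b}}(Y)^{++})$, is a cancellative semigroup, injectivity of $\beta$ follows from the universal property of the Grothendieck construction exactly as in the proof of Proposition \ref{Grot}. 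I would therefore organize the argument so that cancellativity is isolated and invoked once, rather than re-derived, keeping the three parts cleanly separated.
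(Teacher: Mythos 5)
Your proposal is correct and takes essentially the same approach as the paper's proof: part (i) is immediate, part (ii) is handled by testing the $\ll$-based lower semicontinuity of $f$ against a finitely-valued function compactly contained in $f(x_0)$, and part (iii) rests on the fact that bounded real-valued functions cancel pointwise, so that the two pointwise orders match up. The only point your sketch of (ii) leaves implicit, and which the paper carries out explicitly, is the construction of the two-valued test function $g$ (equal to $\alpha+\epsilon$ on an open set $V_{y_0}\ni y_0$ with $\overline{V}_{y_0}$ contained in $\{y \mid f(x_0)(y)>\alpha+\epsilon\}$, and equal elsewhere to a uniform strictly positive lower bound $\epsilon_0$ of $f(x_0)$, which exists by compactness of $Y$) together with the verification that this $g$ satisfies $g\ll f(x_0)$.
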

\begin{proof}
(i): This is trivial.

(ii): We have to show that the set $U_{\alpha}=\{(x,y)\mid
f(x)(y)>\alpha\}$ is open for all $\alpha>0$.

Fix $(x_0,y_0)\in U_{\alpha}$. Since $f(x_0)(y_0)>\alpha$, we may consider
$f(x_0)(y_0)>\alpha+\epsilon'>\alpha+\epsilon>\alpha$ for some
$\epsilon,\epsilon'> 0$. Since $f(x_0)$ is lower semicontinuous, there exists
an open set $V'_{y_0} \subseteq Y$ containing $y_0$ such that $f(x_0)(y)>
\alpha+\epsilon$ for all $y\in V'_{y_0}$. Now, as $Y$ is compact, $f(x_0)$ is
bounded away from zero and we find $0<\epsilon_0< \alpha$ such that
$f(x_0)(y)>\epsilon_0$ for all $y\in Y$.

Let $V_{y_0}$ be an open neighborhood of $y_0$ such that $V_{y_0}\subseteq \overline{V}_{y_0}\subseteq V'_{y_0}$. Define $g\in \mathrm{Lsc}_{\mathrm{b}}(Y)^{++}$ by
$g(y)=\alpha+\epsilon\text{ when } y\in V_{y_0}$ and
$g(y)=\epsilon_0<\alpha$ otherwise. Observe that, by the way we have chosen $V_{y_0}$ and the construction of $g$, for
every $y\in Y$, there exists $U_y$ containing $y$ and
$\lambda_y\in\mathbb{R}^+$ such that
$g(y')\leq \lambda_y <f(x_0)(y')$ whenever $y'\in U_y$. This implies that
$g\ll f(x_0)$ in $\mathrm{Lsc}_{\mathrm{b}}(Y)$.

Since $f$ is lower semicontinuous, $\{x\in X\mid f(x)\gg g\}$ is an open set containing $x_0$. Thus,
we may find an open set $U_{x_0}$ such
that $x_0\in U_{x_0}$ and $f(x)\gg g$ for all $x\in U_{x_0}$. Now, for
$(x,y)$ in the open set $U_{x_0}\times V_{y_0}\subseteq X\times Y$ we have
$\tilde{f}(x,y)=f(x)(y)>g(y)=\alpha+\epsilon>\alpha$.

(iii): We first need to check that $\beta$ is well-defined. Suppose that  $[f]-[g]=
[f']-[g']$ in $\mathrm{G}(\mathrm{Lsc}_{\mathrm{b}}(X,\mathrm{Lsc}_{\mathrm{b}}(Y)^{++}))$. Then there is $h$
such that $f+g'+h=f'+g+h$. Since $h(x)$ is bounded for every $x$, we obtain
$f(x)(y)+g'(x)(y)=f'(x)(y)+g(x)(y)$ for all $x$ and $y$, and so
$f(x)(y)-g(x)(y)=f'(x)(y)-g'(x)(y)$. By (ii), it is clear that
$\beta([f]-[g])\in G_\mathrm{b}(X,Y)$, and that it is a group homomorphism. If $[f]-[g]\in \mathrm{G}(\mathrm{Lsc}_{\mathrm{b}}(X,\mathrm{Lsc}_{\mathrm{b}}(Y)^{++}))$, then $g\leq f$, if and only if $g(x)(y)\leq f(x)(y)$ for each $x$ and $y$, proving that $\beta$ is an order-embedding. 
\end{proof}

 As customary, for a unital C$^*$-algebra $A$ we we denote the set of normalized quasi traces on $A$ by $\mathrm{QT}(A)$, and the set of normalized tracial states by $\mathrm{T}(A)$. Recall that for an exact C$^*$-algebra $\mathrm{QT}(A)=\mathrm{T}(A)$ (\cite{haag}). We also denote the set of extreme points of a convex set $K$ by $\partial_eK$.  

Although the result below might be well-known to experts, we provide a proof for completeness (with thanks to Nate Brown).
\begin{lemma}
\label{nate}
 Let $X$ be a compact Hausdorff space and let $A$ be a unital C$^*$-algebra. Then there exists a homeomorphism between $\partial_e\mathrm{T}(\mathrm{C}(X,A))$ and $X\times \partial_e\mathrm{T}(A)$. Moreover, if $\tau\in\partial_e\mathrm{T}(\mathrm{C}(X,A))$ corresponds to $(x,\tau_A)$, then $d_{\tau}(b)=d_{\tau_A}(b(x))$ for any $b\in M_{\infty}(\mathrm{C}(X,A))_+$.
\end{lemma}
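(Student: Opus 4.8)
The plan is to establish the homeomorphism $\partial_e\mathrm{T}(\mathrm{C}(X,A))\cong X\times\partial_e\mathrm{T}(A)$ by first understanding all of $\mathrm{T}(\mathrm{C}(X,A))$, and then identifying its extreme points. First I would recall that a tracial state $\tau$ on $\mathrm{C}(X,A)$ restricts, via the canonical unital embeddings, to data over $X$. The key structural fact is that any $\tau\in\mathrm{T}(\mathrm{C}(X,A))$ decomposes as an integral: there is a probability measure $\mu$ on $X$ together with a measurable family $(\tau_x)_{x\in X}$ of tracial states on $A$ such that $\tau(b)=\int_X \tau_x(b(x))\,d\mu(x)$ for all $b\in\mathrm{C}(X,A)$. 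This is a standard disintegration of traces on $\mathrm{C}(X)\otimes A$ with respect to the central subalgebra $\mathrm{C}(X)$; the restriction of $\tau$ to $\mathrm{C}(X)\cdot 1_A\cong\mathrm{C}(X)$ is a state, hence a probability measure $\mu$ by Riesz representation, and for each fixed $x$ one gets a trace on $A$ by the usual measure-theoretic slicing.

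Next I would show that the extreme points are exactly the point-mass traces. Given $(x,\tau_A)\in X\times\partial_e\mathrm{T}(A)$, define $\tau(b)=\tau_A(b(x))$; this is clearly a tracial state, and the assignment is injective. For extremality, I would argue that if $\tau=\tau_A\circ\mathrm{ev}_x$ were written as $\tfrac12(\sigma_1+\sigma_2)$ with $\sigma_i\in\mathrm{T}(\mathrm{C}(X,A))$, then testing against $\mathrm{C}(X)\cdot 1_A$ forces both $\sigma_i$ to have measure $\delta_x$, so each $\sigma_i$ factors through $\mathrm{ev}_x$ as a trace on $A$; extremality of $\tau_A$ in $\mathrm{T}(A)$ then yields $\sigma_1=\sigma_2=\tau$. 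Conversely, if $\tau$ is extreme, its disintegration measure $\mu$ must be extreme in the probability measures on $X$ (otherwise one could split $\tau$), hence $\mu=\delta_x$ for some $x$, so $\tau=\tau_x\circ\mathrm{ev}_x$; and then $\tau_x$ must be extreme in $\mathrm{T}(A)$, again by lifting a nontrivial convex splitting of $\tau_x$ back up. This gives a bijection $\partial_e\mathrm{T}(\mathrm{C}(X,A))\leftrightarrow X\times\partial_e\mathrm{T}(A)$.

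For the topological statement, I would equip all trace spaces with the weak-$*$ topology and check continuity of the bijection and its inverse. The map $(x,\tau_A)\mapsto\tau_A\circ\mathrm{ev}_x$ is weak-$*$ continuous because $(x,\tau_A)\mapsto\tau_A(b(x))$ is continuous for each fixed $b\in\mathrm{C}(X,A)$: continuity in $\tau_A$ is by definition of weak-$*$, and continuity in $x$ follows from continuity of $b$ together with a uniform estimate $|\tau_A(b(x))-\tau_A(b(x'))|\le\|b(x)-b(x')\|$. Since the domain is compact (as $X$ and $\partial_e\mathrm{T}(A)$ are compact in the relevant sense, using that $\partial_e\mathrm{T}(A)$ is closed when $A$ is unital and the extreme boundary is weak-$*$ closed in the unital setting, or by passing through the compact space $\mathrm{T}(A)$) and the codomain is Hausdorff, the continuous bijection is automatically a homeomorphism.

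Finally, the formula for dimension functions follows by continuity and approximation. Writing $\tau\leftrightarrow(x,\tau_A)$, the rank function is $d_\tau(b)=\lim_{n\to\infty}\tau\big((b-\tfrac1n)_+^{1/n}\big)$, or more transparently $d_\tau(b)=\sup_{\epsilon>0}\tau(f_\epsilon(b))$ where $f_\epsilon$ is a continuous approximation to the indicator of $(0,\infty)$; since $\tau=\tau_A\circ\mathrm{ev}_x$ commutes with continuous functional calculus evaluated at $x$, one gets $\tau(f_\epsilon(b))=\tau_A(f_\epsilon(b(x)))$, and taking the supremum over $\epsilon$ yields $d_\tau(b)=d_{\tau_A}(b(x))$ for all $b\in M_\infty(\mathrm{C}(X,A))_+$ (the matrix amplification is handled identically). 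The main obstacle I anticipate is the disintegration step: making the measurable family $(\tau_x)$ rigorous and justifying that extremality of $\tau$ forces $\mu$ to be a point mass requires care, though for the purpose of identifying extreme points one can largely sidestep a full disintegration by arguing directly that extreme traces factor through a single evaluation, using that $\mathrm{C}(X)\cdot 1_A$ sits centrally and an extreme trace must be multiplicative-like on this central copy.
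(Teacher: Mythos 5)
Your route is genuinely different from the paper's: the paper characterizes extreme traces via their GNS representations (a trace is extreme iff the weak closure of its GNS image is a factor, citing Dixmier), observes that the central copy $\mathrm{C}(X)\otimes 1_A$ must then be represented by scalars, and reads off $\tau=\mathrm{ev}_{x_0}\otimes\tau_A$ directly. You instead argue by convexity over the central copy of $\mathrm{C}(X)$. The core of your extreme-point identification is sound in the ``sidestep'' form you mention at the end: an extreme trace restricts to a multiplicative state on $\mathrm{C}(X)\cdot 1_A$, this copy therefore lies in the multiplicative domain of $\tau$, and $\tau$ factors through $\mathrm{ev}_x$; this is a legitimate and arguably more elementary alternative to the von Neumann algebra argument. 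However, as written, your primary argument has two genuine problems.

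First, the disintegration $\tau(b)=\int_X\tau_x(b(x))\,d\mu(x)$ with a measurable family $(\tau_x)$ is not justified at the generality of the lemma: $X$ is only compact Hausdorff and $A$ an arbitrary unital C$^*$-algebra, so the separability hypotheses behind measurable selection/disintegration are absent. It is also unnecessary: for the converse direction, if $\mu=\tau|_{\mathrm{C}(X)}$ is not multiplicative, pick $f\in\mathrm{C}(X)$ with $0\leq f\leq 1$ and $0<\mu(f)<1$, and split $\tau=\mu(f)\sigma_1+(1-\mu(f))\sigma_2$ with $\sigma_1(b)=\mu(f)^{-1}\tau(fb)$; extremality of $\tau$ then forces $\tau(fb)=\mu(f)\tau(b)$ for all $f$ and $b$, i.e.\ $\mu$ is multiplicative, hence $\mu=\delta_x$, and the factorization through $\mathrm{ev}_x$ follows. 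Second, and more seriously, your homeomorphism argument rests on the claim that $\partial_e\mathrm{T}(A)$ is weak-$*$ closed (hence compact) whenever $A$ is unital. This is false in general: closedness of the extreme boundary is precisely the condition that $\mathrm{T}(A)$ be a Bauer simplex, which is an \emph{extra} hypothesis the paper has to impose where it is needed (Proposition \ref{prop:embedding}), and it is not assumed in this lemma. So the ``continuous bijection from a compact space to a Hausdorff space'' shortcut does not apply. The fix is easy and should replace it: check continuity of the inverse $\tau\mapsto(x,\tau_A)$ directly. Indeed, $f(x)=\tau(f\otimes 1_A)$ and $\tau_A(a)=\tau(1\otimes a)$ depend weak-$*$ continuously on $\tau$, and convergence $f(x_i)\to f(x)$ for every $f\in\mathrm{C}(X)$ implies $x_i\to x$ because the compact Hausdorff space $X$ carries the initial topology from $\mathrm{C}(X)$. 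Your treatment of $d_\tau(b)=d_{\tau_A}(b(x))$ via functional calculus commuting with evaluation is fine and matches the paper's computation.
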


\begin{proof}
Recall that a normalized trace on a unital C$^*$-algebra is extremal if, and only if, the weak closure of its corresponding GNS-representation is a factor (i.e. has trivial center), see, e.g. \cite[Theorem 6.7.3]{Dixmier}. Now identify $\mathrm{C}(X,A)$ with $B:=\mathrm{C}(X)\otimes A$. Let $\tau\in\partial_e\mathrm{T}(B)$ and let $(\pi_\tau,\mathcal H_\tau,v)$ be the GNS-triple associated to $\tau$, and we know that $\pi_\tau(B)''$ is a factor.

Since $\mathrm{C}(X)\otimes 1_A$ is in the center of $B$, we have that $\pi_\tau(\mathrm{C}(X)\otimes 1_A)$ is in the center of $\pi_\tau(B)''$, whence $\pi_\tau(\mathrm{C}(X)\otimes 1_A)=\mathbb{C}$. Thus, the restriction of $\pi_\tau$ to $\mathrm{C}(X)\otimes 1_A$ corresponds to a point evaluation $\mathrm{ev}_{x_0}$ for some $x_0\in X$.

Next,
\[
\tau(f\otimes a)=\langle \pi_\tau(f\otimes a)v,v\rangle=\langle \mathrm{ev}_{x_0}(f)\pi_\tau(1\otimes a)v,v\rangle=f(x_0)\langle \pi_\tau(1\otimes a)v,v\rangle=f(x_0)\tau(1\otimes a)\,,
\] 
for all $f\in \mathrm{C}(X)$ and $a\in A$. Therefore $\tau=ev_{x_0}\otimes \tau_A$ where $\tau_A$ is the restriction of $\tau$ to $1\otimes A$. Note that $\tau_A$ is extremal as $\tau$ is.
%Note that as $\pi_\tau(B)''$ is a factor it implies that $\pi_{\tau_A}(A)''$ is also a factor, so $\tau_A$ is extremal.

We thus have a map $ \psi\colon \partial_e\mathrm{T}(B)\to \partial_e\mathrm{T}(\mathrm{C}(X))\times \partial_e\mathrm{T}(A)$ defined by $\psi(\tau)=(ev_{x_0}, \tau_A)$, which is easily seen to be a homeomorphism.

Now identify $M_n(\mathrm{C}(X,A))$ with $\mathrm{C}(X,M_n(A))$ and let $b\in \mathrm{C}(X,M_n(A))_+$. Let $\tau\in\partial_e\mathrm{T}(B)$ and $\psi(\tau)=(x,\tau_A)$. Then
\[
d_{\tau}(b)=\lim_{k\to\infty}\tau(b^{1/k})=\lim_{k\to\infty}\tau_A(b^{1/k}(x))=d_{\tau_A}(b(x))\,.
\]
\end{proof}

Given $\tau\in \mathrm{QT}(A)$ and $a\in M_\infty(A)_+$, we may construct
 $$d_\tau(a)=\lim_{n\to\infty}\tau(a^{1/n})\,.$$
It turns out that the above map only depends on the Cuntz equivalence class of $a$, and that it defines a lower semicontinuous state on $\mathrm{W}(A)$ (see \cite{BH,Cu}). These states are called \emph{lower
semiconti\-nuous dimension functions}, and we denote them by $\textrm{LDF}(A)$.

If $K$ is a compact convex set, we shall denote by $\mathrm{LAff}_{\mathrm{b}}(K)^{++}$ the
semigroup of (real-valued) bounded, strictly positive, lower semicontinuous and
affine functions on $K$. This is a subsemigroup of the group $\mathrm{Aff}_{\mathrm{b}}(K)$ of all real-valued, bounded affine functions defined on $K$. Now, given a C$^*$-algebra $A$, we may define a semigroup homomorphism
\[
\varphi\colon \mathrm{W}(A)_+\to \mathrm{LAff}_{\mathrm{b}}(\mathrm{QT}(A))\,,
\]
by $\varphi(\langle a\rangle)(\tau)=d_{\tau}(a)$ (see, e.g. \cite{APT}, \cite{pertoms}). For ease of notation, we shall denote $\varphi(\langle a\rangle)=\hat{a}$. Notice that, if $A$ is simple, then $\hat{a}\in\mathrm{LAff}_{\mathrm{b}}(\mathrm{QT}(A))^{++}$ if $a$ is non-zero.

Observe also that there is an ordered morphism $\alpha\colon \mathrm{W}(\mathrm{C}(X,A))\to \mathrm{Lsc}_{\mathrm b}(X,\mathrm{W}(A))$, given by $\alpha(\langle b\rangle)(x)=\langle b(x)\rangle$.

\begin{proposition}
\label{prop:embedding} Let $X$ be a compact Hausdorff space, and let $A$ be a separable, exact, infinite dimensional, simple, unital, C$^*$-algebra with strict comparison and such that $\mathrm{T}(A)$ is a Bauer simplex. Then there is an order-embedding
\[
\mathrm{G}(\mathrm{Lsc}_{\mathrm{b}}(X, \mathrm{W}(A)))\to\mathrm{Aff}_{\mathrm{b}}(\mathrm{T}(\mathrm{C}(X,A)))\,.
\]
Moreover, given $b\in \mathrm{C}(X,M_n(A))_+$, this map sends the class of the function $\alpha(\langle b\rangle)$ to $\hat b$. 
\end{proposition}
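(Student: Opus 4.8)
The plan is to factor the desired embedding through the purely positive part of the Cuntz semigroup, where strict comparison makes the rank map $\varphi$ an order-embedding, and then to transport everything to genuine affine functions on the trace simplex using the Bauer hypothesis. First I would record the ambient identifications. Since $A$ is exact, $\mathrm{QT}(A)=\mathrm{T}(A)$, and since $\mathrm{T}(A)$ is a Bauer simplex the set $Y:=\partial_e\mathrm{T}(A)$ is compact; standard Choquet theory (see, e.g., \cite{Goo}) then gives an order-isomorphism $\mathrm{LAff}_{\mathrm{b}}(\mathrm{T}(A))^{++}\cong \mathrm{Lsc}_{\mathrm{b}}(Y)^{++}$, via restriction to $Y$ and barycentric extension. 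Moreover, by Lemma \ref{nate} the extreme boundary of $\mathrm{T}(\mathrm{C}(X,A))$ is $X\times Y$, so $\mathrm{T}(\mathrm{C}(X,A))$ is again a Bauer simplex, affinely homeomorphic to $\mathrm{Prob}(X\times Y)$, with $\sigma\mapsto\mu_\sigma$ the passage to the (unique) representing boundary measure.

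Next I would build the map. Because $A$ is simple, infinite dimensional and stably finite, $\mathrm{W}(A)_+$ absorbs $\mathrm{W}(A)$ (cf.\ \cite{APT}), so by Proposition \ref{Grot} the inclusion induces an isomorphism $\mathrm{G}(\mathrm{Lsc}_{\mathrm{b}}(X,\mathrm{W}(A)))\cong\mathrm{G}(\mathrm{Lsc}_{\mathrm{b}}(X,\mathrm{W}(A)_+))$, and every class is represented as $[f]-[h]$ with $f,h\in\mathrm{Lsc}_{\mathrm{b}}(X,\mathrm{W}(A)_+)$. Strict comparison makes $\varphi\colon\mathrm{W}(A)_+\to\mathrm{LAff}_{\mathrm{b}}(\mathrm{T}(A))^{++}\cong\mathrm{Lsc}_{\mathrm{b}}(Y)^{++}$ an order-embedding. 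Post-composition with $\varphi$ should then yield an order-embedding $\varphi_*\colon \mathrm{Lsc}_{\mathrm{b}}(X,\mathrm{W}(A)_+)\to\mathrm{Lsc}_{\mathrm{b}}(X,\mathrm{Lsc}_{\mathrm{b}}(Y)^{++})$; feeding this into the map $\beta$ of Proposition \ref{lsc}(iii) lands in $\mathrm{G}_{\mathrm{b}}(X,Y)$, i.e.\ in honest bounded functions on $X\times Y$, and finally integration $\Theta(F)(\sigma)=\int_{X\times Y}F\,d\mu_\sigma$ carries $\mathrm{G}_{\mathrm{b}}(X,Y)$ into $\mathrm{Aff}_{\mathrm{b}}(\mathrm{T}(\mathrm{C}(X,A)))$. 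The resulting composite $\Phi$ sends $[f]-[h]$ to the affine extension of the boundary function $(x,\tau_A)\mapsto \widehat{f(x)}(\tau_A)-\widehat{h(x)}(\tau_A)$.

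I would then verify that $\Phi$ is an order-embedding directly, avoiding any appeal to ``Grothendieck preserves embeddings''. Evaluating against the point masses $\delta_{(x,\tau_A)}\in\mathrm{Prob}(X\times Y)$, the condition $\Phi([f]-[h])\geq 0$ forces $\widehat{f(x)}(\tau_A)\geq\widehat{h(x)}(\tau_A)$ for all $x$ and all $\tau_A\in Y$; by barycentric extension this gives $\widehat{f(x)}\geq\widehat{h(x)}$ on all of $\mathrm{T}(A)$, whence $h(x)\leq f(x)$ pointwise since $\varphi$ is an order-embedding, that is $[f]-[h]\geq 0$. The same point-mass argument with equalities gives injectivity. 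For the final assertion, if $b\in\mathrm{C}(X,M_n(A))_+$ then the boundary function attached to $\alpha(\la b\ra)$ is, by Lemma \ref{nate}, exactly $(x,\tau_A)\mapsto d_{\tau_A}(b(x))=d_\tau(b)=\hat b(\tau)$ for $\tau\leftrightarrow(x,\tau_A)$; since $\hat b$ is already affine on the Bauer simplex $\mathrm{T}(\mathrm{C}(X,A))$ it agrees with its own barycentric extension, so $\Phi([\alpha(\la b\ra)])=\hat b$ (for general $b$ one first adds a purely-positive-valued $c$ to land in $\mathrm{W}(A)_+$, then uses additivity of $\Phi$ and of $d_\tau$ and cancels).

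The hard part will be checking that $\varphi_*$ is well defined, that is, that $\varphi\circ f$ is lower semicontinuous as an $\mathrm{Lsc}_{\mathrm{b}}(Y)^{++}$-valued map whenever $f\in\mathrm{Lsc}_{\mathrm{b}}(X,\mathrm{W}(A)_+)$, because this reconciles the two different meanings of the relation $\ll$ (the one in $\mathrm{W}(A)$ versus the one in $\mathrm{Lsc}_{\mathrm{b}}(Y)$). Concretely, given $g\ll\widehat{f(x_0)}$ in $\mathrm{Lsc}_{\mathrm{b}}(Y)$ I would interpolate $g\ll g'\ll\widehat{f(x_0)}$, write $f(x_0)=\sup_n\la a_n\ra$ as a supremum of a rapidly increasing sequence, and use lower semicontinuity of the dimension functions to get $\widehat{f(x_0)}=\sup_n\widehat{a_n}$, so that $g'\leq\widehat{a_{n_0}}$ with $\la a_{n_0}\ra\ll f(x_0)$; lower semicontinuity of $f$ then makes $\{x:\la a_{n_0}\ra\ll f(x)\}$ an open neighbourhood of $x_0$ on which $g\ll\widehat{a_{n_0}}\leq\widehat{f(x)}$, giving openness of $\{x:g\ll\widehat{f(x)}\}$. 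Establishing this compatibility of $\varphi$ with suprema and with $\ll$, together with the order-embedding property supplied by strict comparison on $\mathrm{W}(A)_+$, is the crux; once it is in place, Proposition \ref{lsc} and the Bauer identifications assemble the result.
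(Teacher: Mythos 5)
Your proposal follows essentially the same route as the paper's proof: reduce to $\mathrm{W}(A)_+$ via absorption and Proposition \ref{Grot}, use strict comparison to make $\varphi$ an order-embedding, pass through $\mathrm{Lsc}_{\mathrm{b}}(X,\mathrm{Lsc}_{\mathrm{b}}(\partial_e\mathrm{T}(A))^{++})$ and the map $\beta$ of Proposition \ref{lsc}(iii), and finish with Lemma \ref{nate} and barycentric extension over the Bauer simplex $\mathrm{T}(\mathrm{C}(X,A))$. The only deviations are refinements of the same argument --- your direct point-mass verification of the order-embedding property (where the paper implicitly relies on cancellativity of the real-valued target) and your explicit check that post-composition with $\varphi$ preserves lower semicontinuity --- and both are correct.
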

\begin{proof}
Since $\mathrm{Lsc}_{\mathrm{b}}(X,\mathrm{W}(A)_+)$ absorbs $\mathrm{Lsc}_{\mathrm{b}}(X,\mathrm{W}(A))$, there is by Lemma \ref{Grot} an order-iso\-mor\-phism between $\mathrm{G}(\mathrm{Lsc}_{\mathrm{b}}(X,\mathrm{W}(A)))$ and $\mathrm{G}(\mathrm{Lsc}_{\mathrm{b}}(X,\mathrm{W}(A)_+))$. In fact, if we take $\langle a\rangle\in \mathrm{W}(A)_+$, and let $v\colon X\to \mathrm{W}(A)_+$ be the function defined as $v(x)=\langle a\rangle$, the previous isomorphism takes $[\alpha(\langle b\rangle)]$ to $[\alpha(\langle b\rangle)+v]-[v]$. Next, as $A$ has strict comparison, the semigroup homomorphism $\varphi$ defined previous to this proposition is an order-embedding (see \cite[Theorem 4.4]{pertoms}) and thus induces 
\[
\mathrm{G}(\mathrm{Lsc}_{\mathrm{b}}(X,\mathrm{W}(A)_+))\to \mathrm{G}(\mathrm{Lsc}_{\mathrm{b}}(X,\mathrm{LAff}_{\mathrm{b}}(\mathrm{T}(A))^{++}))\,,
\]
which is also an order-embedding, and takes $[\alpha(\langle b\rangle)+v]-[v]$ to $[\tilde{\varphi}(\alpha(\langle b\rangle))+\hat{a}]-[\hat{a}]$, where we identify $\hat{a}$ with a constant function and $\tilde{\varphi}(\alpha(\langle b\rangle))(x)=\widehat{b(x)}$.
Now, since $\mathrm{T}(A)$ is a Bauer simplex, the restriction to the extreme boundary yields a semigroup isomorphism $r\colon \mathrm{LAff}_{\mathrm{b}}(\mathrm{T}(A))^{++}\cong \mathrm{Lsc}_{\mathrm{b}}(\partial_e \mathrm{T}(A))^{++}$ (see, e.g. \cite[Lemma 7.2]{Goo2}). Combining these observations with condition (iii) in Proposition \ref{lsc}, we obtain an order-embedding
\[
\mathrm{G}(\mathrm{Lsc}_{\mathrm{b}}(X,\mathrm{Lsc}_{\mathrm{b}}(\partial_e\mathrm{T}(A))^{++}))\to \mathrm{G}_{\mathrm{b}}(X,\partial_e\mathrm{T}(A))\,,
\]
that sends $[r(\tilde{\varphi}(\alpha(\langle b\rangle))+\hat{a})]-[r(\hat{a})]$ to $r(\tilde{\varphi}(\alpha(\langle b\rangle))+\hat{a})^{\sim}-r(\hat{a})^{\sim}$, which equals the function $(x,\tau_A)\mapsto d_{\tau_A}(b(x))$. Finally, upon identifying the compact space $X\times\partial_e\mathrm{T}(A)$ with $\partial_e\mathrm{T}(\mathrm{C}(X,A))$ (by Lemma \ref{nate}), a second usage of \cite[Lemma 7.2]{Goo2} allows us to order-embed $\mathrm{G}_{\mathrm{b}}(X,\partial_e\mathrm{T}(A))$ into $\mathrm{Aff}_{\mathrm{b}}(\mathrm{T}(\mathrm{C}(X,A)))$, and the  map $(x,\tau_A)\mapsto d_{\tau_A}(b(x))$ is sent to $\hat{b}$, as desired.
\end{proof}

\begin{theorem}\label{A}
Let $X$ be a finite dimensional, compact metric space, and let  $A$ be a unital, separable, infinite dimensional and exact C$^*$-algebra of stable rank one such that  $\mathrm{T}(A)$ is a Bauer simplex. Then $\mathrm{LDF}(\mathrm{C}(X,A))$ is dense in $\mathrm{DF}(\mathrm{C}(X,A))$ in the following cases:
\begin{enumerate}[{\rm (i)}]
\item $\dim X\leq 1$ and $A$ is simple, $\mathrm{K}_1(A)=0$ and $A$ has strict comparison. 
\item $X$ is arc-like, $A$ is simple, has real rank zero,  and strict comparison.
\item $\dim X\leq 2$ and $\check{\mathrm{H}}^2(X,\mathbb Z)=0$, with $A$ an AF-algebra.
\item $A$ is a non-type I, simple, unital ASH algebra with slow dimension growth.
\end{enumerate}

\end{theorem}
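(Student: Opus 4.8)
The plan is to realize $\mathrm{DF}(\mathrm{C}(X,A))$ as the state space of the ordered group $\mathrm{K}_0^*(\mathrm{C}(X,A))$, to identify $\mathrm{LDF}(\mathrm{C}(X,A))$ with the point evaluations coming from traces, and then to run a Hahn--Banach/bipolar argument inside $\mathrm{Aff}_{\mathrm{b}}$. Recall that $\mathrm{DF}(\mathrm{C}(X,A))=\mathrm{St}(\mathrm{K}_0^*(\mathrm{C}(X,A)),[1])$ and that, since $A$ (hence $\mathrm{C}(X,A)$) is exact, $\mathrm{QT}(\mathrm{C}(X,A))=\mathrm{T}(\mathrm{C}(X,A))$, so every element of $\mathrm{LDF}(\mathrm{C}(X,A))$ has the form $d_\tau$ for some $\tau\in\mathrm{T}(\mathrm{C}(X,A))$. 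The key input is Proposition \ref{prop:embedding}, which provides an order-embedding $\beta\colon \mathrm{G}(\mathrm{Lsc}_{\mathrm{b}}(X,\mathrm{W}(A)))\to \mathrm{Aff}_{\mathrm{b}}(\mathrm{T}(\mathrm{C}(X,A)))$ sending the class of $\alpha(\la b\ra)$ to $\hat b$.

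First I would check, in each of the four cases, that the canonical map $\alpha$ induces an order-isomorphism $\mathrm{K}_0^*(\mathrm{C}(X,A))\cong \mathrm{G}(\mathrm{Lsc}_{\mathrm{b}}(X,\mathrm{W}(A)))$ and that the hypotheses of Proposition \ref{prop:embedding} (simple, exact, infinite dimensional, strict comparison, Bauer trace simplex) are met by $A$. In case (iv) this identification is exactly the content of Theorem \ref{cor:tiku}. In cases (i)--(iii) one combines the isomorphism $\Cu(\mathrm{C}(X,A))\cong\mathrm{Lsc}(X,\Cu(A))$ (available from \cite{APS}, as used in Theorem \ref{interpolacio}) with the hereditariness of $\mathrm{W}(\mathrm{C}(X,A))$ in $\Cu(\mathrm{C}(X,A))$ --- guaranteed since $\mathrm{C}(X,A)$ has stable rank one by Corollary \ref{CorDimOne} --- together with the absorption argument of Proposition \ref{Grot} applied to $\mathrm{W}(A)_+$ and $\mathrm{Lsc}_{\mathrm{b}}(X,\mathrm{W}(A)_+)$. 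Composing with $\beta$ then yields an order-embedding $\Theta\colon \mathrm{K}_0^*(\mathrm{C}(X,A))\to \mathrm{Aff}_{\mathrm{b}}(\mathrm{T}(\mathrm{C}(X,A)))$ with $\Theta([\la b\ra])=\hat b$ and $\Theta([1])=1$, the constant function.

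Under $\Theta$, the lower semicontinuous dimension function $d_\tau$ becomes the point evaluation $s\mapsto \Theta(s)(\tau)$ at $\tau\in\mathrm{T}(\mathrm{C}(X,A))$, because $\Theta([\la b\ra])(\tau)=\hat b(\tau)=d_\tau(b)$. Since each $\Theta(g)$ is an \emph{affine} function on the trace simplex, the assignment $\tau\mapsto(\mathrm{ev}_\tau\circ\Theta)=d_\tau$ is affine, so $\mathrm{LDF}(\mathrm{C}(X,A))$ is a \emph{convex} subset of $\mathrm{DF}(\mathrm{C}(X,A))$. The heart of the proof is then the inequality $s(g)\leq \sup_{\tau}\Theta(g)(\tau)$, valid for every state $s$ and every $g\in \mathrm{K}_0^*(\mathrm{C}(X,A))$: writing $M=\sup_\tau\Theta(g)(\tau)<\infty$, which is finite precisely because $\Theta$ lands in the \emph{bounded} affine functions, for rationals $p/q>M$ one has $\Theta(qg)<\Theta(p[1])$ pointwise, hence $qg\leq p[1]$ by the order-embedding property of $\Theta$, so $q\,s(g)\leq p$ and therefore $s(g)\leq M$. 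By the bipolar theorem this says exactly that every state lies in the weak-$*$ closed convex hull of the point evaluations $\{\mathrm{ev}_\tau\circ\Theta\}_{\tau}$. Since these point evaluations already form the convex set $\mathrm{LDF}(\mathrm{C}(X,A))$, their closed convex hull equals their closure, and we conclude $\overline{\mathrm{LDF}(\mathrm{C}(X,A))}=\mathrm{DF}(\mathrm{C}(X,A))$.

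The conceptual core --- the inequality and the bipolar step --- is clean once $\Theta$ is in place, and it genuinely relies on the boundedness of $\Theta(g)$. I expect the main obstacle to be the case-by-case verification of the identification $\mathrm{K}_0^*(\mathrm{C}(X,A))\cong\mathrm{G}(\mathrm{Lsc}_{\mathrm{b}}(X,\mathrm{W}(A)))$ together with confirming that $A$ satisfies all hypotheses of Proposition \ref{prop:embedding} in each case. In particular, $\mathrm{C}(X,A)$ itself will in general fail strict comparison, so one cannot embed $\mathrm{K}_0^*(\mathrm{C}(X,A))$ directly into affine functions; one must route the embedding through the fibre $A$, and it is here that the structural results of Sections \ref{SectionHer} and \ref{sec:lsc} do the real work.
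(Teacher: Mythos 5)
Your overall architecture coincides with the paper's: identify $\mathrm{K}_0^*(\mathrm{C}(X,A))$ with $\mathrm{G}(\mathrm{Lsc}_{\mathrm{b}}(X,\mathrm{W}(A)))$ case by case, push this group into $\mathrm{Aff}_{\mathrm{b}}(\mathrm{T}(\mathrm{C}(X,A)))$ via Proposition \ref{prop:embedding}, identify $\mathrm{LDF}(\mathrm{C}(X,A))$ with the point evaluations (using exactness to get $\mathrm{QT}=\mathrm{T}$), and conclude that every state is a limit of evaluations. Where the paper simply invokes \cite[Lemma 6.1]{BPT} (following the argument of \cite[Theorem 6.4]{BPT}), you reprove that lemma directly: your inequality $s(g)\leq\sup_\tau\Theta(g)(\tau)$ followed by Hahn--Banach separation is exactly the content of that citation, and your version is essentially correct. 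The one point needing care there is that the separating functional on $\mathbb{R}^G$ has \emph{real} coefficients while $G$ is only a group; one must approximate the coefficients by rationals and clear denominators, which works uniformly over $\mathrm{T}(\mathrm{C}(X,A))$ precisely because the functions $\Theta(g_i)$ are bounded --- the role of boundedness you correctly flag. Your observation that affineness of $\tau\mapsto \mathrm{ev}_\tau\circ\Theta$ makes the evaluations a convex set, so that closed convex hull equals closure, is also the right way to finish.

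The genuine gap is in your justification of hereditariness in cases (ii) and (iii). You assert that in cases (i)--(iii) hereditariness of $\mathrm{W}(\mathrm{C}(X,A))\subseteq\Cu(\mathrm{C}(X,A))$ is ``guaranteed since $\mathrm{C}(X,A)$ has stable rank one by Corollary \ref{CorDimOne}''. That corollary requires $\dim X\leq 1$ \emph{and} that $A$ have no $\mathrm{K}_1$-obstructions. In case (ii) triviality of $\mathrm{K}_1(A)$ is not among the hypotheses, and if $\mathrm{K}_1(A)\neq 0$ then $\mathrm{sr}(\mathrm{C}([0,1],A))=2$ by \cite[Proposition 5.2]{nop} (see also the remark following Theorem \ref{interpolacio}), so for the arc-like space $X=[0,1]$ your stable rank claim is outright false. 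In case (iii) the space may be two-dimensional, where Corollary \ref{CorDimOne} says nothing and stable rank one generally fails (e.g.\ $\mathrm{sr}(\mathrm{C}([0,1]^2))=2$). The correct route in these two cases --- and the one the paper takes --- is the radius-of-comparison criterion \cite[Theorem 4.4.1]{BRTTW}: strict comparison of $A$ gives radius of comparison zero, residual stable finiteness (Lemma \ref{lem:stfinite}) and the pointwise order on $\mathrm{Lsc}(X,\Cu(A))$ transfer this to $\mathrm{C}(X,A)$, and hereditariness follows; this is Proposition \ref{lem:cuntzse} for arc-like $X$ (whose proof also supplies the isomorphism $\Cu(\mathrm{C}(X,A))\cong\mathrm{Lsc}(X,\Cu(A))$ there, via continuity of the functor $\Cu$ rather than \cite[Theorem 3.4]{APS}), and Remark \ref{rem:hereditary} for case (iii). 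With these substitutions your cases (i) and (iv) and the remainder of the argument go through as written.
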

\begin{proof}
(i): By \cite[Theorem 3.4]{APS}, we have that $\Cu(\mathrm{C}(X,A))$ and $\mathrm{Lsc}(X,\Cu(A))$ are order-isomorphic, and $\mathrm{W}(\mathrm{C}(X,A))$ is hereditary as it has stable rank one by Theorem \ref{StableRankOfDimOne}. It follows easily from this that $\mathrm{W}(\mathrm{C}(X,A))$ is order-isomorphic to $\mathrm{Lsc}_{\mathrm{b}}(X,\mathrm{W}(A))$. We may apply Proposition \ref{prop:embedding} so that $\mathrm{K}_0^*(\mathrm{C}(X,A))$ is order-isomorphic to a (pointwise ordered) subgroup $G$ of $\mathrm{Aff}_{\mathrm b}(\mathrm{T}(\mathrm{C}(X,A)))$ in such a way that $[b]$ is mapped to $\hat{b}$, and in particular $[1]$ is sent to the constant function $1$.

We apply now the same argument as in \cite[Theorem 6.4]{BPT}, which we sketch for convenience. If $d\in \mathrm{DF}(\mathrm{C}(X,A))$, then it can be identified with  a normalized state (at $1$) on $G$. By \cite[Lemma 6.1]{BPT}, there is a net of traces $(\tau_i)$ in $\mathrm{T}(\mathrm{C}(X,A))$ such that $d(s)=\lim_i s(\tau_i)$ for any $s\in G$. In particular, $d([b])=\lim_i \hat{b}(\tau_i)=d_{\tau_i}(b)$ for $b\in M_{\infty}(\mathrm{C}(X,A))_+$.

(ii): This case uses the same arguments as (i), replacing \cite[Theorem 3.4]{APS} by Proposition \ref{lem:cuntzse} and its proof.

(iii): Proceed as in case (i),  using \cite[Corollary 3.6]{APS} instead of \cite[Theorem 3.4]{APS} and Remark \ref{rem:hereditary}.

(iv): As in the proof of Theorem \ref{cor:tiku}, we see that $\mathrm{K}_0^*(\mathrm{C}(X,A))\cong \mathrm{G}(\mathrm{Lsc}_{\mathrm b}(X,\mathrm{W}(A)))$ as ordered groups, and then we may use the same argument as in case (i).
\end{proof}

\section*{Acknowledgements}

This work has been partially supported by a MICIIN grant (Spain) through Project MTM2011-28992-C02-01, and by the Comissionat per Universitats i Recerca de la Generalitat de Catalunya. The fourth named author has received support from the DFG (Germany), PE 2139/1-1.

\end{document}